\title{\vspace{-2cm}Shuffling functors and spherical twists on $\Db(\CatO_0)$}
\author{\href{mailto:fabian.lenzen@tum.de}{Fabian Lenzen}, TU Munich}
\date{26th March 2021}
\SetMathAlphabet{\mathbf}{normal}{OT1}{lmr}{b}{n}
\SetMathAlphabet{\mathbf}{bold}{OT1}{lmr}{bx}{n}
\SetMathAlphabet{\mathsf}{bold}{OT1}{lmss}{b}{n}
\SetMathAlphabet{\mathit}{bold}{OT1}{lmr}{b}{it}
\DeclareFontFamily{U}{mathc}{}
\DeclareFontShape{U}{mathc}{m}{it}{<->s*[1.03] mathc10}{}
\DeclareMathAlphabet{\mathcalx}{U}{mathc}{m}{it}
\let\include\input
\newcolumntype{Q}[1]{>{\centering\arraybackslash$}p{#1}<{$}} %
\newcolumntype{C}{>{$}Sc<{$}}                                %
\setlist[itemize]{itemsep=0pt}
\newtheoremstyle{theorem}{}{}{}{}{\itshape}{.}{ }{}
\theoremstyle{theorem}
\newtheorem*{theorem*}{Theorem}
\newtheorem{theorem}{Theorem}[section]
\newtheorem{lemma}[theorem]{Lemma}
\newtheorem*{proposition*}{Proposition}
\newtheorem{proposition}[theorem]{Proposition}
\newtheorem{corollary}[theorem]{Corollary}
\newtheorem*{definition*}{Definition}
\newtheorem{definition}[theorem]{Definition}
\newtheorem{definition-lemma}[theorem]{Definition and Lemma}
\newtheorem{example}[theorem]{Example}
\newtheorem{notation}[theorem]{Notation}
\newtheorem{remark}[theorem]{Remark}
\newtheorem{caveat}[theorem]{Caveat}
\newcommand{\thmtitle}[1]{(\textit{#1})}
\patchcmd{\thmhead}{(#3)}{#3}{}{}
\crefname{lemma}{Lemma}{Lemmas}
\crefname{observation}{Observation}{Observations}
\crefname{notation}{Notation}{Notations}
\crefname{caveat}{Caveat}{Caveats}
\newlist{thmlist}{enumerate}{1}
\setlist[thmlist]{label=\textup{(\roman*)}, ref=\thetheorem.\textup{(\roman*)}, noitemsep}
\newlist{prooflist}{enumerate}{1}
\setlist[prooflist]{label=\textup{(\roman{prooflisti})}, ref=(\roman{prooflisti}),noitemsep}
\crefname{prooflisti}{part}{parts}
\newlist{thmlist*}{enumerate*}{1}
\setlist[thmlist*]{label=\textup{(\roman{thmlist*i})}, ref=\thetheorem.(\roman{thmlist*i}),noitemsep}
\newcommand\strongnopagebreak{\nopagebreak\@afterheading} 
\numberwithin{equation}{section}
\numberwithin{table}{section}
\numberwithin{figure}{section}
\newcommand\showtimer{%
	\PackageWarning{Timer}{Page \thepage: \the\numexpr\the\pdfelapsedtime*1000/65536\relax ms}%
	\pdfresettimer}
\newcommand{\nonumberfootnote}[1]{%
	\begingroup%
	\renewcommand\thefootnote{}\footnote{#1}%
	\addtocounter{footnote}{-1}%
	\endgroup%
}
\newcommand\ie{i.\,e\@ifnextchar.{}{.\@}}
\newcommand\cf{cf\@ifnextchar.{}{.\@}}
\newcommand\eg{e.\,g\@ifnextchar.{}{.\@}}
\newcommand\wrt{w.\,r.\,t\@ifnextchar.{}{.\@}}
\newcommand{\resp}{resp\@ifnextchar.{}{.\@}}
\newcommand{\Wlog}{W.\,l.\,o.\,g\@ifnextchar.{}{.\@}}
\newcommand{\wlofg}{w.\,l.\,o.\,g\@ifnextchar.{}{.\@}}
\newcommand{\ses}{s.\,e.\,s\@ifnextchar.{}{.\@}}
\newcommand{\st}{s.\,t\@ifnextchar.{}{.\@}}
\DeclareMathOperator{\Hom}{Hom}
\DeclareMathOperator{\Ext}{Ext}
\DeclareMathOperator{\End}{End}
\newcommand{\id}{\mathrm{id}}%
\DeclareMathOperator{\im}{im}
\DeclareMathOperator{\Sh}{Sh}
\DeclareMathOperator{\Csh}{Csh}
\DeclareMathOperator{\cone}{cone}
\DeclareMathOperator{\cocone}{cocone}
\DeclareMathOperator{\coker}{coker}
\DeclareMathOperator{\can}{can}
\DeclareMathOperator{\Ch}{Ch}
\DeclareMathOperator{\Res}{Res}
\DeclareMathOperator{\Ind}{Ind}
\newcommand{\ev}{\mathit{ev}}
\DeclareMathOperator{\rad}{rad}
\DeclareMathOperator{\soc}{soc}
\newcommand{\lin}{\operatorname{lin}_\mathbf{C}}%
\newcommand{\unit}{\eta}
\newcommand{\counit}{\varepsilon}
\newcommand{\isom}{\cong}
\newcommand{\ldot}{\mathbin{.}}
\newcommand{\qis}{\simeq}
\newcommand{\Db}{D^\mathrm{b}}
\newcommand{\Kb}{K^\mathrm{b}}
\newcommand{\SL}{\mathfrak{sl}}
\newcommand{\degZero}[2][0]{\underset{\mathsf{#1}}{#2}}
\newcommand{\SPol}{\mathop{\Lambda\operator@font Pol}\nolimits}
\newcommand{\SPolC}{\mathop{\Lambda{\operator@font Pol}\mathfrak C}\nolimits}
\newcommand{\CatO}{\mathcal{O}}
\newcommand{\gCatO}{\mathcal{O}^{\mathbf Z}}
\newcommand{\Aut}{\operatorname{Aut}}
\newcommand\Sph{\mathcal{S}\mkern-3mu\mathcalx{ph}}
\renewcommand{\degZero}[1]{\underset{0}{#1}}
\newcommand\Mtrx[1]{%
	\begin{psmallmatrix}%
		#1%
	\end{psmallmatrix}%
}
\newenvironment{smallcases}{\left\{\begin{smallmatrix*}[l]}{\end{smallmatrix*}\right.}
\newcommand{\subalign}[1]{%
	\vcenter{%
		\Let@ \restore@math@cr \default@tag
		\baselineskip\fontdimen10 \scriptfont\tw@
		\advance\baselineskip\fontdimen12 \scriptfont\tw@
		\lineskip\thr@@\fontdimen8 \scriptfont\thr@@
		\lineskiplimit\lineskip
		\ialign{\hfil$\m@th\scriptstyle##$&&$\m@th\scriptstyle##$\hfil\crcr
			#1\crcr
		}%
	}
}
\newenvironment{scriptaligned}[1][c]{%
	\,\hbox\bgroup%
	\fontsize{\sf@size}{\dimexpr\sf@size pt+1pt}\selectfont%
	$\!\aligned[#1]%
}{%
	\endaligned$\egroup
}
\newcommand\restrict[2]{{%
	\left.\kern-\nulldelimiterspace %
	#1 %
	\right|_{#2} %
}}
\newcommand{\xmapsfrom}[2][]{%
	\ext@arrow3095\leftarrowfill@{#1}{#2}\mapsfromchar
}
\newcommand{\xtofrom}[2][]{%
	\mathrel{%
		\raise.4ex\hbox{%
			$\ext@arrow 0395\rightarrowfill@{\phantom{#1}}{#2}$}%
		\setbox0=\hbox{%
			$\ext@arrow 0395\leftarrowfill@{#1}{\phantom{#2}}$}%
		\kern-\wd0 \lower.4ex\box0%
	}%
}
\newcommand{\mapstofrom}{%
	\mathrel{\ooalign{%
		\raise.4ex\hbox{$\mapsto$}\cr%
		\raise-.4ex\hbox{$\mapsfrom$}\cr}%
	}%
}
\renewcommand{\rightleftarrows}{\xtofrom[]{}}
\newcommand{\longrightleftarrows}{\xtofrom[]{\quad}}
\DeclareRobustCommand{\longinto}{%
  \mathrel{\mathpalette\shook@rightarrow\relax}%
}
\newcommand{\shook@rightarrow}[2]{%
  \sbox\z@{$\m@th#1\lhook$}%
  {\lhook}%
  \kern-.25\wd\z@
  {\smash{\clipbox{{.75\wd\z@} 0pt 0pt {-\width}}{$\m@th#1\longrightarrow$}}}%
}
\newcommand*{\relrelbarsep}{.386ex}
\newcommand*{\relrelbar}{%
  \mathrel{%
    \mathpalette\@relrelbar\relrelbarsep
  }%
}
\newcommand*{\@relrelbar}[2]{%
  \raise#2\hbox to 0pt{$\m@th#1\relbar$\hss}%
  \lower#2\hbox{$\m@th#1\relbar$}%
}
\providecommand*{\rightrightarrowsfill@}{%
  \arrowfill@\relrelbar\relrelbar\rightrightarrows
}
\providecommand*{\xrightrightarrows}[2][]{%
  \ext@arrow 0359\rightrightarrowsfill@{#1}{#2}%
}
\let\into\hookrightarrow
\let\onto\twoheadrightarrow
\let\xto\xrightarrow
\let\from\leftarrow
\let\longto\longrightarrow
\DeclareRobustCommand\vdots{%
  \mathpalette\@vdots{}%
}
\newcommand*{\@vdots}[2]{%
	\sbox0{$#1\cdotp\cdotp\cdotp\m@th$}%
	\sbox2{$#1.\m@th$}%
	\vbox{%
	\dimen@=\wd0 %
	\advance\dimen@ -3\ht2 %
	\kern.5\dimen@
	\dimen@=\wd2 %
	\advance\dimen@ -\ht2 %
	\dimen2=\wd0 %
	\advance\dimen2 -\dimen@
	\vbox to \dimen2{%
		\offinterlineskip
		\copy2 \vfill\copy2 \vfill\copy2 %
	}%
	}%
}
\DeclareRobustCommand\ddots{%
	\mathinner{%
		\mathpalette\@ddots{}%
		\mkern\thinmuskip
	}%
}
\newcommand*{\@ddots}[2]{%
	\sbox0{$#1\cdotp\cdotp\cdotp\m@th$}%
	\sbox2{$#1.\m@th$}%
	\vbox{%
		\dimen@=\wd0 %
		\advance\dimen@ -3\ht2 %
		\kern.5\dimen@
		\dimen@=\wd2 %
		\advance\dimen@ -\ht2 %
		\dimen2=\wd0 %
		\advance\dimen2 -\dimen@
		\vbox to \dimen2{%
			\offinterlineskip
			\hbox{$#1\mathpunct{.}\m@th$}%
			\vfill
			\hbox{$#1\mathpunct{\kern\wd2}\mathpunct{.}\m@th$}%
			\vfill
			\hbox{$#1\mathpunct{\kern\wd2}\mathpunct{\kern\wd2}\mathpunct{.}\m@th$}%
		}%
	}%
}
\let\term\emph
\newcommand\trivpath[1]{\varepsilon_{#1}} %
\newcommand{\eqsp}{\mathrel{\phantom{=}}}
\newcommand{\noloc}{\nobreak\mskip6muplus1mu\mathpunct{}\nonscript
  \mkern-\thinmuskip{:}\mskip2mu\relax}
\newcommand\casesGap{\hphantom{\left\{\rule{0pt}{1cm}\right. \kern-\nulldelimiterspace}}
\newlength{\algnRef}
\mathchardef\mhyphen="2D
\newcommandx{\Mod}[3][1={},3={}]{%
	\ifthenelse{\isempty{#2}}{}{#2\mhyphen}%
	{\operator@font#1Mod}%
	\ifthenelse{\isempty{#3}}{}{\mhyphen#3}%
}
\newcommand{\Proj}[1]{%
	#1\mhyphen{\operator@font Proj}
}
\newcommand{\VS}[1]{%
	#1\mhyphen{\operator@font Vect}
}
\newcommand\hShift[1]{[#1]}
\renewcommand\Sh[1]{\operatorname{Sh}_{#1}}
\renewcommand\Csh[1]{\operatorname{Csh}_{#1}}
\newcommand\LSh[1]{\mathbf{L}\Sh{#1}}
\newcommand\RCsh[1]{\mathbf{R}\Csh{#1}}
\newcommand\Complex{\mathbf{C}}
\newcommand{\p}{\mathfrak{p}}
\let\epsilon\varepsilon
\tikzset{
	|/.tip={Bar[width=.8ex,round]},
	back line/.style={densely dotted},
	cross line/.style={%
		preaction={%
			draw=white,
			-, 
			line width=6pt
		}
	},
	brace tip/.style = {
		sloped, allow upside down, yshift=+1ex
	},
	brace tip'/.style = {
		sloped, allow upside down, yshift=-1ex
	},
	brace/.style={
		decoration={calligraphic brace, amplitude=.8ex},
		decorate,
		line width=.3ex
	},
	brace'/.style={
		decoration={calligraphic brace, amplitude=.8ex, mirror},
		decorate,
		line width=.3ex
	},
	braced box/.style = {
		rectangle,
		inner sep=0mm,
		append after command = {
			\pgfextra{
				\draw[brace] (\tikzlastnode.north east) to node[xshift=1ex] (#1){} (\tikzlastnode.south east);
			}
		}
	},
	braced box'/.style = {
		rectangle,
		inner sep=0mm,
		append after command = {
			\pgfextra{
				\draw[brace] (\tikzlastnode.south west) to node[xshift=-1ex] (#1){} (\tikzlastnode.north west);
			}
		}
	},
	braced box''/.style = {
		rectangle,
		inner sep=0mm,
		append after command = {
			\pgfextra{
				\draw[brace] (\tikzlastnode.north west) to node[yshift=1ex] (#1){} (\tikzlastnode.north east);
			}
		}
	},
	braced box'''/.style = {
		rectangle,
		inner sep=0mm,
		append after command = {
			\pgfextra{
				\draw[brace] (\tikzlastnode.south east) to node[yshift=-1ex] (#1){} (\tikzlastnode.south west);
			}
		}
	},
	mth/.style = {
		commutative diagrams/every diagram,
		every path/.style={
			commutative diagrams/.cd, every arrow, every label, arrows=dash, tips=on proper draw
		},
		matrix of nodes/.append style={
			nodes={font=\normalsize}
		}
	}
}
\tikzset{
	diag/.style={
		execute at begin picture={%
			\let\oldblacklozenge\blacklozenge%
			\let\oldlozenge\lozenge%
			\newcommand{\blacklozengeX}{\scalebox{.65}{$\oldblacklozenge$}}%
			\newcommand{\lozengeX}{\scalebox{.65}{$\oldlozenge$}}%
			\newcommand{\decofactor}{1}
			\let\oldbullet\bullet%
			\let\oldcirc\circ
			\renewcommand{\blacklozenge}{\scalebox{\decofactor}{$\blacklozengeX$}}%
			\renewcommand{\lozenge}{\scalebox{\decofactor}{$\lozengeX$}}%
			\renewcommand{\bullet}{\scalebox{\decofactor}{$\oldbullet$}}%
			\renewcommand{\circ}{\scalebox{\decofactor}{$\oldcirc$}}%
		},
		x=5mm,
		y=5mm,
		baseline=-.5ex,
		curved/.style={looseness=1.7},
		rot/.style={sloped},
		deco/.style={
			font=\normalsize,
			every deco,
			inner sep=0pt,
			label distance=0pt
		},
		every node/.append style={
			inner sep=0pt,
			font=\scriptsize
		},
		below/.default={
			1.2mm
		},
		above/.default={
			1.2mm
		},
		every label/.append style={
			font=\scriptsize,
			inner sep=0,
			label distance=1mm
		},
		smaller/.style={
			scale=.8,
			every deco/.append style={
				execute at begin node={
					\renewcommand{\decofactor}{.9}
				}
			}	
		},
		inline/.style={
			x=1ex, y=1ex,
			every deco/.append style={
				execute at begin node={
				}
			},
			left/.default={.5ex},
			right/.default={.5ex}
		},
		every diag,
		every inline diag,
	},
	every deco/.style={},
	every diag/.style={},
	every inline diag/.style={}
}
\tikzset{
  bezier/controls/.code args={(#1) and (#2)}{
    \def\mystartcontrol{#1}
    \def\mytargetcontrol{#2}
  },
  bezier/limit/.store in=\mylimit,
  bezier/limit=1cm,
  bezier/.code={
    \tikzset{bezier/.cd,#1}
    \tikzset{
      to path={
        let
        \p0=(\tikztostart),    \p1=(\mystartcontrol),
        \p2=(\mytargetcontrol), \p3=(\tikztotarget),
        \n0={veclen(\x1-\x0,\y1-\y0)},
        \n1={veclen(\x3-\x2,\y3-\y2)},
        \n2={\mylimit}
        in  \pgfextra{
          \pgfmathtruncatemacro\ok{max((\n0>\n2),(\n1>\n2))}
        }
        \ifnum\ok=1 %
        let
        \p{01}=($(\p0)!.5!(\p1)$), \p{12}=($(\p1)!.5!(\p2)$), \p{23}=($(\p2)!.5!(\p3)$),
        \p{0112}=($(\p{01})!.5!(\p{12})$), \p{1223}=($(\p{12})!.5!(\p{23})$),
        \p{01121223}=($(\p{0112})!.5!(\p{1223})$)
        in
        to[bezier={controls={(\p{01}) and (\p{0112})}}]
        (\p{01121223})
        to[bezier={controls={(\p{1223}) and (\p{23})}}]
        (\p3)
        \else
        [overlay=false] .. controls (\p1) and (\p2) ..  (\p3) [overlay=true]
        \fi
      },
    },
  },
  limit bb/.style n args={2}{
    overlay,
    decorate,
    decoration={
      show path construction,
      moveto code={},
      lineto code={\path[#2] (\tikzinputsegmentfirst) -- (\tikzinputsegmentlast);},
      curveto code={
        \path[#2]
        (\tikzinputsegmentfirst)
        to[bezier={limit=#1,controls={(\tikzinputsegmentsupporta) and (\tikzinputsegmentsupportb)}}]
        (\tikzinputsegmentlast);
      },
      closepath code={\path[#2] (\tikzinputsegmentfirst) -- (\tikzinputsegmentlast);},
    },
  },
  limit bb/.default={1mm}{draw},
}
	\tikzset{
		limit bb/.style n args={2}{},
		out/.style={},
		in/.style={},
		in looseness/.style={},
		out looseness/.style={},
		looseness/.style={},
		brace/.style={},
		brace'/.style={}
	}
\renewcommand{\tableofcontents}{
	\section*{Contents}
	\leavevmode
	\par
	\listoftoc*{toc}
}
\begin{document}
\maketitle
\nonumberfootnote{
	Published in Journal of Algebra, 
	doi: \href{http://dx.doi.org/10.1016/j.jalgebra.2021.02.024}{10.1016/j.jalgebra.2021.02.024}.
}

\begin{abstract}
	\noindent
	{\bfseries Abstract}\enspace
	For a semisimple complex Lie algebra $\mathfrak g$,
	the BGG category $\CatO$ is of particular interest in representation theory.
	It is known that Irving's shuffling functors $\Sh{w}$,
	indexed by elements $w\in W$ of the Weyl group,
	induce an action of the braid group $B_W$ associated to $W$
	on the derived categories $\Db(\CatO_\lambda)$ of blocks of $\CatO$.
	
	We show that for maximal parabolic subalgebras $\p$ of $\SL_n$
	corresponding to the parabolic subgroup $W_\p=S_{n-1}\times S_1$ of $S_n$,
	the derived shuffling functors $\LSh{s_i}$
	are instances of Seidel and Thomas' spherical twist functors.
	Namely, we show that certain parabolic indecomposable projectives $P^\p(w)$
	are spherical objects,
	and the associated twist functors are naturally isomorphic to $\LSh{w}\hShift{1}$
	as auto-equivalences of $\Db(\CatO^\p)$.
	
	We give an overview of the main properties of the BGG category $\CatO$,
	the construction of shuffling and spherical twist functors,
	and give some examples how to determine images of both.
	To this end, we employ the equivalence of blocks of $\CatO$ 
	and the module categories of certain path algebras.
\end{abstract}

\thispagestyle{empty}
\enlargethispage*{4cm}
\tableofcontents

\pagebreak

\section{Introduction}
Consider a finite dimensional semisimple complex Lie algebra $\mathfrak g$
with Cartan subalgebra $\mathfrak h$ that gives rise to a root system $\Phi$
and a root space decomposition $\mathfrak g = \bigoplus_{\alpha \in \Phi} \mathfrak g_\alpha$.
A choice of simple roots $\Delta$ fixes the positive roots $\Phi^+$,
the corresponding subalgebra $\mathfrak n \coloneqq \bigoplus_{\alpha \in \Phi^+} \mathfrak g_\alpha$
and the corresponding Borel subalgebra $\mathfrak b = \mathfrak h \oplus \mathfrak n$.

Representations of $\mathfrak g$ are equivalent to modules
over the universal enveloping algebra $U(\mathfrak g)$ \autocite[§V]{Humphreys:Lie}.
The \emph{BGG category} $\CatO$ of $\mathfrak g$
is the full subcategory of $\Mod{U(\mathfrak g)}$
consisting of modules that
\begin{thmlist*}[label=($𝓞$\arabic*)]
	\item are finitely generated,
	\item have a weight space decomposition $M=⨁_{𝜆∈𝔥^*} M_𝜆$ and
	\item are locally $𝔫$-finite; \ie, for every $v∈M$, the orbit $U(𝔫^+)v$ is finite dimensional.
\end{thmlist*}

\subsection{Category \texorpdfstring{$\CatO$}{𝓞}, blocks and shuffling functors}
Denote the Weyl group of $\mathfrak g$ by $W$.
The half-sum of positive roots $\rho \coloneqq \frac{1}{2}\sum_{\alpha \in \Phi^+} \alpha$
gives rise to the dot-action $w\cdot\colon \lambda \mapsto w(\lambda+\rho)-\rho$.
The $\Complex$-span of $\Phi$ is a Euclidean space
and its inner product $(-,-)$ fixes the coroots $\check{\alpha} \coloneqq \frac{2\alpha}{(\alpha,\alpha)}$
and the \term{fundamental weights} $\varpi_{\alpha}$ for $\alpha \in \Delta$
defined by $\frac{(\varpi_\alpha, \beta)}{(\beta, \beta)}$ for $\alpha, \beta \in \Delta$.
We call a weight $\lambda \in \Complex\Phi$ \emph{integral}
if $\frac{2(\lambda, \alpha)}{(\alpha,\alpha)} \in \mathbf{Z}$
and denote by $\Lambda$ the set of all integral weights.
We call a $\mathbf{Z}_{\geq 0}$-linear combination of the $\varpi_{\alpha}$'s \emph{dominant}
and denote the set of all dominant weights by $\Lambda^+$.
We call a weight $\lambda$ \term{$\rho$-dominant}
if $\frac{2(\lambda + \rho, \alpha)}{(\alpha, \alpha)} \notin \mathbf{Z}_{<0}$ for all $\alpha \in \Delta$.
To a weight $\lambda$ we associate the subgroup
$W_\lambda \coloneqq \langle s_\alpha \mid \frac{2(\lambda + \rho, \alpha)}{(\alpha,\alpha)} \in \mathbf{Z} \rangle \leq W$;
See \autocites{Humphreys:CatO}{Jantzen:Moduln-mit-hoechstem-Gewicht} for details.

The category $\CatO$ has a decomposition
$\CatO=\bigoplus_{\lambda}\CatO_\lambda$ into \term{blocks} $\CatO_\lambda$,
indexed by the $\rho$-dominant weights $\lambda$,
each of which consists of the modules of highest weight in $W_\lambda\cdot\lambda$
\autocite[thm\ 4.9]{Humphreys:CatO}%
\footnote{Humphreys indexes blocks by $\rho$-antidominant weights, which results in slightly different formulation for some statements.}.
In particular, Each block $\CatO_\lambda$ contains the simple modules $L(w\cdot \lambda)$,
the indecomposable projectives $P(w\cdot\lambda)$
and the Verma modules $M(w\cdot\lambda)$
of highest weight $w\in W/W_\lambda$.
If a block $\CatO_\lambda$ is fixed,
we just write $L(w)$, $P(w)$ and $M(w)$ for the respective objects therein.
Each block $\CatO_\lambda$ is Morita equivalent 
to modules over a quasi-hereditary algebra \autocite{BGG}.

A weight $\lambda$ is called \term{regular} 
if its \term{stabilizer subgroup} $W_\lambda$ \wrt\ the dot-action is trivial;
\ie, if $\lambda$ does not lie on any reflection plane.
All blocks $\CatO_\lambda$ associated to regular weights are equivalent as categories;
in the following we shall thus work in the block $\CatO_0$
containing the trivial $\mathfrak g$-representation $L(e\cdot 0)=\Complex$,
which is called the \term{principal block}.

\begin{definition}
	A \emph{Coxeter system} consists of a group $W$, a fixed set $S$ of generators and a presentation 
	$W=⟨s∈S \mid s^2 = e, sts\dotsm = tst\dotsm ⟩$ with $m_{st}$ factors $s,t$ on both sides.
	The $s\in S$ are called \term{simple reflections}.
	The matrix $(m_{st})_{s,t\in W}$ is called the \term{Coxeter matrix} of $W$.
	To $W$, there is the associated \term{braid group}
	$B_W=⟨s∈S \mid sts\dotsm = tst\dotsm⟩$,
	such that there is a natural quotient map $B_W\onto W$.
	A finite Coxeter system has a \term{length function} $\ell\colon W \to \mathbf{N}_0$,
	which assigns to an element $w \in W$ the length of a shortest expression for $w$
	in terms of $S$.
	With respect to $\ell$, there is a unique longest element $w_0 \in W$.
\end{definition}

\begin{example}
	The Weyl group of $\mathfrak g$ is a Coxeter group.
	In particular, the symmetric group $S_n$,
	which is the Weyl group of $\SL_n$,
	is a Coxeter group, 
	generated by the simple reflections $s_1,\dotsc,s_{n-1}$.
	Its Coxter matrix has entries
	$m_{s_i,s_j} = \begin{smallcases}
	1 & \text{if $i=j$}\\ 3 & \text{if $\lvert i-j\lvert = 1$}\\2 & \text{otherwise}
	\end{smallcases}$.
	For the symmetric group, $B_n ≔ B_{S_n}$ is the well-known Artin braid group.
\end{example}

For weights $\lambda, \mu \in \mathfrak{h}^*$ with $\lambda - \mu \in \Lambda$,
there is a unique $\nu \in W(\mu-\lambda)$ (\wrt\ the ordinary $W$-action)
such that the simple module $L(\nu)$ is finite-dimensional,
which is the case if and only if $\nu$ is dominant
\autocite[thm.\ 1.6]{Humphreys:CatO}.

\begin{definition}[{\autocite[54]{Jantzen:Moduln-mit-hoechstem-Gewicht}}]
	The \term{translation functor} $T_\lambda^\mu\colon \CatO_\lambda \to \CatO_\mu$
	assigns to $M$ the direct summand of $M \otimes L(\nu)$ lying in $\CatO_\mu$.
\end{definition}

The functor $T_\lambda^\mu$ is exact, preserves projectives, commutes with duality,
and is biadjoint to $T_\mu^\lambda$.
From now on, we assume that $\lambda, \mu$ are integral.
For most of what we need, this is stricter than necessary,
but sufficient for our needs;
see \autocite[§7]{Humphreys:CatO} for a more general treatment
and for an overview of properties of $T_\lambda^\mu$.

\begin{definition}
	Let $s\in W$ be a simple reflection
	and $\mu$ be an integral weight with stabiliser $W_\mu=\{e,s\}$.
	The \term{translation through the $s$-wall} is the composition 
	$\Theta_s\coloneqq T_\mu^0 T_0^\mu\colon \CatO_0 \to \CatO_0$.
\end{definition}

As notation suggests, $\Theta_s$ is independent of the choice of $\mu$.
It is an exact self-adjoint auto-equivalence of the block $\CatO_0$
\autocite[§2.10]{Jantzen:Moduln-mit-hoechstem-Gewicht}.
It is uniquely determined by the existence of short exact sequences
\begin{equation}
	\label{eqn:translation-defining-ses}
	0\to M(w)\to \Theta_s M(w) \to M(ws) \to 0
	\quad\text{and}\quad
	\Theta_s M(w)\isom\Theta_s M(ws)
\end{equation}
for $w<ws$ \autocite[Satz 2.10]{Jantzen:Moduln-mit-hoechstem-Gewicht}.
Furthermore, $\Theta_s^2 = \Theta_s ⊕ \Theta_s$ \autocite[§7.14]{Humphreys:CatO}.

\begin{definition}[{\autocites[§2]{Carlin:Extensions-of-Vermas}[§3]{Irving:Shuffled-Verma-Modules}}]
	\label{def:shuffling-functor}
	From the adjunctions $T_\mu^0 \dashv T_0^\mu$ and $T_0^\mu \dashv T_\mu^0$ 
	we get adjunction maps $\unit_s\colon \id \Rightarrow \Theta_s$ 
	and $\counit_s\colon \Theta_s \Rightarrow \id$,
	and we define the mutually adjoint \term{shuffling} and the \term{coshuffling functor} $\Sh{s}$ and $\Csh{s}$ as
	\begin{equation*}
		\Sh{s}  ≔ \coker(\unit_s)\colon \CatO_0 \to \CatO_0,
		\qquad
		\Csh{s} ≔ \ker(\counit_s)\colon \CatO_0 \to \CatO_0.
	\end{equation*} 
\end{definition}

In particular, \cref{eqn:translation-defining-ses} implies that for $w < ws$,
\begin{equation}
	\label{eq:shuffling-on-Vermas}
	\Sh{s} M(w) = M(ws) \quad \text{and} \quad \Csh{s} M(ws) = M(w).
\end{equation}
By the snake lemma,
$\Sh{s}$ is right exact and $\Csh{s}$ is left exact,
so we can consider the mutually inverse \autocite[thm.\ 5.7]{stroppel:translation-and-shuffling} derived functors
$\LSh{s}$ and $\RCsh{s}$.

In general, we can see an adjunction morphism $\unit_s$ on an abelian category $\mathcal C$
as a functor from $\mathcal C$ to the arrow category $\mathcal C^{[1]}$.
By the snake lemma, the cokernel is a right exact functor functor $\mathcal C^{[1]} \to \mathcal C$,
whose derived functor $\mathbf L \coker\colon \Db(\mathcal C^{[1]}) \to \Db(\mathcal{C})$
is the mapping cone \autocite[thm\ 3.5.6]{groth:derivators}.%
\footnote{%
	As the inclusion $\Db(\mathcal C^{[1]}) \into \Db(\mathcal C)^{[1]}$ is no equivalence in general,
	one has to pay attention to the fact that
	the mapping cone is a functor on coherent diagrams $\Db(\mathcal C^{[1]})$,
	but not necessarily on general diagrams $\Db(\mathcal C)^{[1]}$.
}
We thus obtain that $\LSh{s} \isom\cone(\unit_s)$ 
and dually that $\RCsh{s}\isom\cocone(\counit_s) = \cone(\counit_s)\hShift{1}$.
The snake lemma also implies that $\mathbf L^i\Sh{s} = 0\ \text{for $i\neq 0,1$}$ 
and $\mathbf R_j \Csh{s} = 0\ \text{for $j\neq -1,0$}$.

\begin{definition}
	We say that an object $M \in \mathcal C$ is $F$-acyclic
	for a left or right exact functor $F\colon \mathcal C \to \mathcal D$ of abelian categories
	if $\mathbf R_i F M = 0$ or $\mathbf L^i M = 0$ for $i \neq 0$, respectively.
\end{definition}

Projective (respectively, injective) objects are acyclic for every right (resp., left) exact functor.
Since the adjunction unit $\unit_s\colon M(w) \to \Theta_s$ 
is injective for all $w \in W$ \autocite[thm\ 12.2]{Humphreys:CatO},
every $M(w)$ is both $\Sh{s}$-acyclic.

\subsection{\texorpdfstring{Actions of $W$ on $K_0(\CatO_0)$ and $\Db(\CatO_0)$}{Actions of 𝑊 on 𝐾₀(𝓞₀) and Dᵇ(𝓞₀)}}
\begin{definition}
	The \term{Grothendieck group} $K_0(\mathcal C)$ of an abelian category $\mathcal C$
	is the abelian group generated by symbols $[M]$ for the objects $M\in\mathcal C$,
	subject to the relation $[E]=[A]+[B]$ 
	whenever $E$ is an extension of $A$ by $B$.
	Similarly, the Grothendieck group $K_0(\mathcal T)$ of a triangulated category $\mathcal T$
	is the abelian group generated by symbols $[M]$ for the objects $M\in\mathcal{T}$,
	subject to the relation $[E]=[A]+[B]$ whenever $A \to E \to B \to A\hShift{-1}$ is a triangle.
\end{definition}

An exact (respectively, triangulated) functor induces a group homomorphism
on $K_0(\mathcal C)$ (respectively, $K_0(\mathcal T)$).
The inclusion $\mathcal C \into \Db(\mathcal C)$
induces an isomorphism $K_0(\mathcal C)\isom K_0(\Db(\mathcal C))$ 
of abelian groups \autocite{Grothendieck:Tohoku}.

For the category $\CatO_0$, the group $K_0(\CatO_0)$ is free,
and each of the collections $\{L(w)\}_{w\in W}$, $\{M(w)\}_{w\in W}$ and $\{P(w)\}_{w\in W}$
is a $\mathbf Z$-basis of it.
For the simple modules, this follows from the fact
that every object an $\CatO_0$ has a composition series
(see \cref{sec:composition-series}),
and for the others from the fact that each $[L(w)]$
can be written uniquely in terms of the $M(u)$'s or $P(v)$'s,
due to the composition series of the latter.

The shuffling functors induce a right action of $W$ on $K_0(\CatO_0)$,
given for a $w \in W$ and a simple reflection $s \in W$ by the assignment
\begin{equation}
	W \longto \Aut(K_0(\CatO_0)),\quad
	s \longmapsto [\Sh{s}]\colon [M(w)] \mapsto [M(ws)];
\end{equation}
indeed, we even have $\Sh{s} M(w) = M(ws)$ for $w \leq ws$
and from \cref{eqn:translation-defining-ses} and from the $\Sh{s}$-acyclicity of Verma modules
we learn that $M(w)$ and $\Sh{s}(ws)$ have the same composition factors
such that $[M(w)] = \sum_{v} [M(w) : L(v)] [L(v)] = [M(ws)]$;
hence, the above action is well-defined on $W$.
This action actually comes from an action on the derived category $\CatO_0$:

\begin{theorem}[{\autocite[thm.\ 10.4]{rouquier:categorification}}]
	\label{thm:roquier-braid-group-action}
	The assignment 
	\begin{equation}
		B_W\to\Aut\bigl(\Db(\CatO_0)\bigr),\quad
		s\mapsto\LSh{s},\quad 
		s^{-1}\mapsto\RCsh{s}
	\end{equation}
	defines a weak action of $B_W$ on the derived category $\Db(\CatO_0)$,
	meaning that for simple reflections $s,t \in W$,
	there are natural isomorphisms
	$\LSh{s} \LSh{t} \isom \LSh{t} \LSh{s}$ whenever $st = ts$
	and $\LSh{s} \LSh{t} \LSh{s} \isom \LSh{t} \LSh{s} \LSh{t}$ whenever $sts = tst$,
	and the following square commutes:
	\begin{equation*}
		\begin{tikzcd}
			B_W \ar[d, "\LSh{(-)}"'] \ar[r, "\can", two heads]  & W \dar["{[\Sh{(-)}]}"] \\
			\operatorname{Auteq}(\Db(\CatO_0)) \ar[r, "{[-]}"'] & \Aut(K_0(\CatO_0)).
		\end{tikzcd}
	\end{equation*}
\end{theorem}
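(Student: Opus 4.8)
The plan is to verify four things in turn: that each $\LSh{s}$ is an auto-equivalence with quasi-inverse $\RCsh{s}$; the far-commutation isomorphisms; the braid isomorphisms; and commutativity of the square, which — once the relations are available — reduces to a check on the generators $s\in S$. Only the braid relations demand real work. For the first point, recall that every block of $\CatO$ is Morita equivalent to a finite-dimensional quasi-hereditary algebra \autocite{BGG}, so $\CatO_0$ has finite global dimension and enough projectives and injectives; the derived functors $\LSh{s}$ and $\RCsh{s}$ therefore exist and, by $\mathbf L^i\Sh{s}=0$ for $i\neq 0,1$ and its dual, are bounded. By the cited theorem of Stroppel \autocite{stroppel:translation-and-shuffling} they are mutually quasi-inverse, so $\LSh{s}\in\Auteq(\Db(\CatO_0))$, and the assignment $s\mapsto\LSh{s}$, $s^{-1}\mapsto\RCsh{s}$ is well defined on $S\cup S^{-1}$.

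Because $\LSh{s}\isom\cone(\unit_s\colon\id\Rightarrow\Theta_s)$ and $\Theta_s$ is exact — hence extends to a triangulated endofunctor of $\Db(\CatO_0)$ commuting with cones — every relation to be checked is a statement about iterated cones of the natural transformations $\unit_s,\unit_t$ between the exact functors $\id,\Theta_s,\Theta_t$; concretely, $\LSh{s}\LSh{t}M$ is the total complex of the evident commutative square of translation functors applied to $M$ (with corners $M,\Theta_s M,\Theta_t M,\Theta_s\Theta_t M$), and $\LSh{s}\LSh{t}\LSh{s}M$ that of the evident commutative cube (far corner $\Theta_s\Theta_t\Theta_s M$), and similarly with $s,t$ swapped. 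When $st=ts$ the square for $\LSh{s}\LSh{t}$ and the one for $\LSh{t}\LSh{s}$ become identified once one uses the standard isomorphism $\Theta_s\Theta_t\isom\Theta_t\Theta_s$ — which exists for commuting $s,t$ and is compatible with the units, both composites being realised by translation onto and off the wall fixed by $\langle s,t\rangle\isom\sZ\times\sZ$ (see \autocite{Jantzen:Moduln-mit-hoechstem-Gewicht} for the translation calculus); hence $\LSh{s}\LSh{t}\isom\LSh{t}\LSh{s}$.

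The braid case $sts=tst=:w$ is genuinely different: here $\Theta_s\Theta_t\not\isom\Theta_t\Theta_s$, so there is no term-by-term comparison of the two cubes, and while $\Theta_s\Theta_t\Theta_s\isom\Theta_w\oplus\Theta_s$ and $\Theta_t\Theta_s\Theta_t\isom\Theta_w\oplus\Theta_t$ (Bott--Samelson-type decompositions, $w$ the longest element of $\langle s,t\rangle\isom S_3$), the spurious $\Theta_s$- resp.\ $\Theta_t$-summand must be cancelled inside the total complex, so that producing the braid isomorphism means writing down an explicit homotopy equivalence. I would do this via Soergel's combinatorial functor $\mathbb V\colon\CatO_0\to\Mod{C}$, with $C$ the coinvariant algebra of $W$, under which $\Theta_s$ becomes $B_s\otimes_C(-)$ for $B_s=C\otimes_{C^s}C$ and $\LSh{s}$ becomes tensoring with the invertible two-term Rouquier complex $B_s\to C$ in the bounded homotopy category of Soergel bimodules. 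Rouquier's theorem that these complexes satisfy the braid relations \autocite{rouquier:categorification} — whose proof rests on the rank-two identity $B_sB_tB_s\isom B_w\oplus B_s$ together with an explicit contraction, and so lives entirely in the coinvariant algebra of $S_3$, \ie\ effectively reduces to $\mathfrak g=\SL_3$ — supplies the desired isomorphism; it transfers back to $\Db(\CatO_0)$ because $\mathbb V$ is fully faithful on projectives and the functors in question carry projectives to bounded complexes of projectives, so an isomorphism downstairs is reflected upstairs. One could instead perform this rank-two check by hand inside $\CatO_0$ for $\SL_3$ using the explicit path algebra exhibited later in the paper. This rank-two braid identity is where the genuine difficulty lies; everything else is formal.

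Finally, for $M\in\CatO_0$ one has $[\LSh{s}M]=[\cone(M\to\Theta_s M)]=[\Theta_s M]-[M]$ in $K_0(\Db(\CatO_0))=K_0(\CatO_0)$, so $[\LSh{s}]=[\Theta_s]-\id$ as an operator on $K_0(\CatO_0)$. On the other hand \cref{eqn:translation-defining-ses}, together with the isomorphism $\Theta_s M(w)\isom\Theta_s M(ws)$ it provides, gives $[\Theta_s M(w)]=[M(w)]+[M(ws)]$ for every $w\in W$, whence $([\Theta_s]-\id)[M(w)]=[M(ws)]$; since $\{[M(w)]\}_{w\in W}$ is a basis of $K_0(\CatO_0)$, the operator $[\Theta_s]-\id$ is exactly the operator $[\Sh{s}]$ of the $W$-action defined above. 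Thus $[\LSh{s}]=[\Sh{s}]$ for every simple reflection $s$, which is commutativity of the square on the generators of $B_W$; since $\can\colon B_W\to W$ is determined by $s\mapsto s$, since $[\Sh{(-)}]$ is multiplicative, and since $\LSh{(-)}$ respects the defining relations of $B_W$ by the previous two paragraphs (relations that $[\Sh{(-)}]\circ\can$ also satisfies), the whole square commutes.
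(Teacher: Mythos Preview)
The paper does not prove this theorem at all: it is stated as a citation of Rouquier \autocite[thm.\ 10.4]{rouquier:categorification} and used as background, with no accompanying proof environment. So there is no ``paper's own proof'' to compare against.

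Your sketch is a reasonable outline of how the result is actually established, and the invertibility and Grothendieck-group parts are correct and more explicit than anything in the paper. But note that your argument for the braid relation is not independent: you reduce it, via Soergel's functor $\mathbb V$, to the statement that the Rouquier complexes $\{B_s\to C\}$ satisfy braid relations in the homotopy category of Soergel bimodules --- and for that you cite \autocite{rouquier:categorification}, which is precisely the reference the paper already gives for the theorem. So as written you have unpacked the formal parts and repackaged the hard part, rather than supplying a new proof. If you want a self-contained argument you would need to actually carry out the rank-two contraction (the explicit homotopy equivalence between the two cubes, using $B_sB_tB_s\cong B_w\oplus B_s$) that you allude to; your remark that ``one could instead perform this rank-two check by hand inside $\CatO_0$ for $\SL_3$'' is correct in spirit but is exactly the step that remains to be done.
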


\subsection{Seidel and Thomas' spherical twist functors}
Let $k$ be a field and $\mathcal C$ be a $k$-linear abelian category $\mathcal C$ of finite global dimension.
Seidel and Thomas have constructed
an action of the braid group $B_n\coloneqq B_{S_n}$ of the symmetric group $S_n$ on $\mathcal C$
in terms of \term{spherical objects} \autocite{seidel-thomas:braid-group-actions}.
We give a short summary of their construction.

\begin{definition}
	Let $X, Y\in\Ch(\mathcal C)$ be chain complexes.
	We employ the convention that the homological shift upwards; \ie, $X\hShift{1}_j \coloneqq X_{j-i}$.
	We define the graded $k$-vector space $\Hom^*_{\Db(\mathcal C)}(X,Y)$
	with graded components $\Hom^j_{\Db(\mathcal C)}(X,Y) \coloneqq \Hom_{\Db(\mathcal C)}(X\hShift{j}, Y)$.
	
	We denote by $\hom^\bullet_\mathcal C(X, Y) \in \Ch(\VS{k})$ the chain complex with homological components
	$\hom^j_\mathcal C(X, Y) \coloneqq \Hom_k(X\hShift{j}, Y)$ the graded $k$-linear morphisms $X\to Y$ 
	(not necessarily chain maps),
	and differential $\mathrm d_{\hom}$
	defined on homogeneous elements of degree $j$
	by $\mathrm d_{\hom}(f)=\mathrm d_Y f-(-1)^j f \mathrm d_X$.
	Its cohomology $H^j \hom^\bullet_\mathcal C(X,Y) = \Hom_{\Kb(\mathcal C)}(Y\hShift{j}, Y)$
	is the (honest) the Hom-space in the bounded homotopy category $\Kb(\mathcal C)$ \autocite[§2.7.5]{Weibel:Hom-Alg}.
\end{definition}

\begin{definition}
	Let $X \in \Ch(\mathcal C)$ and $V \in \Ch(\VS{k})$.
	Since $\mathcal C$ is a $k$-linear category,
	we can regard $X$ as a complex of vector spaces,
	on which a $\lambda \in k$ acts  by $\lambda x \coloneqq (\lambda\id_X)(x)$ for $x \in X$.
	We define the complex $\lin^\bullet(V, X) \in \Ch(\mathcal C)$ 
	of \term{$k$-linear maps} from $V$ to $X$
	with differential $(\mathrm d_{\lin}f) v \coloneqq (-1)^{\deg v}[\mathrm dfv - f\mathrm dv]$
	and the \term{tensor product} complex $V \otimes X$
	with differential $\mathrm d_{V \otimes X}(v \otimes x) \coloneqq \mathrm dv \otimes x + (-1)^{\deg v} v \otimes \mathrm dx$ for homogeneous $v$.
\end{definition}

\begin{remark}
	\label{rmk:lin-and-tensor}
	For a homogeneous basis $B$ of $V$ and $v \in V$,
	let $\beta_b(v)$ be the coefficients such that
	$v = \sum_{b \in B} \beta_b(v) b$.
	There are homogeneous isomorphisms of graded vector spaces
	\begin{align*}
		V \otimes X           &\stackrel\cong\longto \textstyle\bigoplus_{b \in B} X\hShift{-\deg b}_b,&
		\lin^\bullet(V, X)    &\stackrel\cong\longto \textstyle\prod_{b \in B} X \hShift{\deg b}_b, \\
		v\otimes x            &\longmapsto \textstyle\sum_b \beta_b(v) x_b &
		f                     &\longmapsto (f(b))_b;
	\end{align*}
	The differential on $V \otimes X$ and $\lin^\bullet(V, X)$ is specified by
	\begin{align*}
		\mathrm d_{V \otimes X}(x_b) &= 
			\textstyle\sum_{b' \in B} \beta_{b'}(\mathrm db) x_{b'} + (-1)^{\deg b} (\mathrm d x)_b,\\
		\mathrm d_{\lin^\bullet(V, X)}\bigl((x_b)_{b \in B}\bigr) &= \textstyle
			\bigl((-1)^{\deg b} \bigl(\mathrm dx_b - (\sum_{b' \in B}\beta_{b}(\mathrm db') x_b) \bigr) \bigr)_{b \in B};
	\intertext{
		if $V$ is degree-wise finite dimensional,
		the complex $\lin(V,X) \cong V^*\otimes X$
		is spanned by maps $x_b \colon b' \mapsto \delta_{bb'} x$
		(where $\delta$ denotes the Kronecker-$\delta$),
		and we get
	}
		\mathrm d_{\lin^\bullet(V, X)}(x_b) &= \textstyle
			(-1)^{\deg b} \bigl( (\mathrm dx_b) - \sum_{b' \in B}\beta_{b}(\mathrm db') x_{b'}\bigr).
	\end{align*}
\end{remark}

\begin{definition}
	\label{def:spherelike-and-spherical}
	\Needspace{3\baselineskip}
	An object $E\in\Db(\mathcal C)$ is called \term{$d$-spherelike} for $d\geq 0$
	if
	\begin{thmlist}[label={(S\arabic*)}, ref={\thetheorem(S\arabic*)}]
		\item\label{def:spherical-object:finite-total-dimension}
			for any $F∈\Db(𝓒)$, 
			the spaces $\Hom^*_{\Db(𝓒)}(E, F)$ and $\Hom^*_{\Db(𝓒)}(F, E)$ are of finite total dimension, and
		\item\label{def:spherical-object:spherical-cohomology}
			$\Hom^*_{\Db(𝓒)}(E, E) ≅ k[x]/(x^2)$ as graded algebras,
			where $x$ is a morphism of degree $d$.
	\end{thmlist}
	It is called \term{$d$-spherical} if it also has the \term{Calabi-Yau-property}: 
	\begin{thmlist}[resume*]
		\item\label{def:spherical-object:calabi-yau}
			For all $F$ and $i$,
			composition of morphisms gives a non\-/degenerate pairing\\
			$\Hom^i_{\Db}(F, E) ⊗ \Hom^{d-i}_{\Db}(E, F) \to \Hom^d_{\Db}(E, E) \to kx$.%
			\footnote{
				If $d\neq 0$, we already have $\Hom^d_{\Db}(E, E) = kx$,
				but if $d=0$, we have $\Hom^d_{\Db}(E, E) = kx \oplus k\id_E$.
				In this case, we mean that 
				$\circ\colon\Hom^i_{\Db}(F, E) ⊗ \Hom^{d-i}_{\Db}(F, E) \to \Hom^d_{\Db}(E, E)/k\id_E ≅ k$
				be non\-/degenerate.
			}
	\end{thmlist}
\end{definition}

\begin{definition}
	\label{def:twist-cotwist}
	For $\Xi_E F ≔ \hom^\bullet_\mathcal C(E, F)⊗E $ 
	and $\Xi'_E F \coloneqq \lin^\bullet(\hom^\bullet_\mathcal C(F, E), E)$,
	we consider the evaluation and coevaluation maps
	\begin{align*}
		\ev\colon \Xi_E F &\to F,\ \phi⊗f  \mapsto\phi(f); &
		\ev'\colon F &\to \Xi'_E F,\ f \mapsto\phi\mapsto\phi(f).
	\end{align*}
	If $E$ is $d$-spherelike,
	we define its \term{spherical twist} and \emph{cotwist functor}
	\begin{align*}
			T_E &≔ \cone(\ev\colon\Xi_E \Rightarrow\id_\mathcal C), &
			T'_E &≔ \cocone(\ev'\colon\id_\mathcal C\Rightarrow\Xi'_E);
	\end{align*}
	here, the grading of the (co)cone is such that $\id_\mathcal C$
	is in degree zero.
\end{definition}
Isomorphic objects in $\Db(\mathcal C)$ give rise
to naturally isomorphic twist and cotwist functors \autocite[prop.\ 2.6]{seidel-thomas:braid-group-actions},
hence $T_E$ and $T'_E$ are well-defined.
There is an adjunction $T'_E\dashv T_E$ \autocite[lem.\ 2.8]{seidel-thomas:braid-group-actions}.

\begin{remark}
	\label{rmk:evaluation-in-terms-of-shifted-copies}%
	\label{rmk:induced-map-in-terms-of-shifted-copies}%
	In terms of the descriptions of $\lin^\bullet$ and $\otimes$
	\wrt\ fixed bases $A$ and $B$ of $\hom^\bullet_\mathcal{C}(E,F)$
	and $\hom^\bullet_\mathcal{C}(F,E)$ from \cref{rmk:lin-and-tensor},
	the evaluation and coevaluation map are the homogeneous maps
	\begin{align*}
		\ev\colon \Xi_E F = \bigoplus_{\mathclap{a \in A}} E \hShift{\deg a}_a & \longto F
		& \ev' \colon F & \longto  \prod_{\mathclap{b \in B}} F \hShift{-\deg b}_b = \Xi'_E F
		\\
		(e_a)_a &\longmapsto \textstyle \sum_a a(e_a)
		& v &\longmapsto (b(v))_b.
	\end{align*}
	Similarly, a morphism $f\colon F \to F'$ of complexes induces maps
	\begin{align*}
		  f_*\colon \Xi_E F &\longto \Xi_E F'
		& f_*\colon \Xi'_E F &\longto \Xi'_E F'
		\\
		g\otimes e &\longmapsto fg\otimes e &
		\psi &\longmapsto h \mapsto \psi(hf), \\
	\intertext{which correspond to}
		 \bigoplus_{a \in A}E\hShift{\deg a}
		&\longto \bigoplus_{\mathclap{a' \in A'}} E\hShift{\deg a'}
		&\prod_{\mathclap{b \in B}} E\hShift{-\deg a}
		&\longto \prod_{\mathclap{b' \in B'}} E\hShift{-\deg b'}
		\\
		e_a &\longmapsto \textstyle\sum_{a' \in A'} (\beta_{a'}(af) e_{a})_{a'}
		& (e_{b})_{b}
		&\longmapsto \bigl(\textstyle\sum_{b \in B} \beta_b(b'f) e_b\bigr)_{b'}.
	\end{align*}
\end{remark}

\begin{proposition}[{\autocite[prop.\ 2.10]{seidel-thomas:braid-group-actions}}]
	If $E$ is $d$-spherical, then $T_E$ and $T'_E$ are mutually inverse auto-equivalences of $\Db(\mathcal C)$.
\end{proposition}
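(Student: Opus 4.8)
The plan is to leverage the adjunction $T'_E\dashv T_E$ recorded just before the statement. For an adjunction $L\dashv R$ one has: $L$ is fully faithful iff the unit $\id\Rightarrow RL$ is an isomorphism, and $R$ is fully faithful iff the counit $LR\Rightarrow\id$ is an isomorphism. Applied to $L=T'_E$, $R=T_E$, this means: if \emph{both} $T_E$ and $T'_E$ are fully faithful, then the unit $\id\Rightarrow T_ET'_E$ and the counit $T'_ET_E\Rightarrow\id$ are both isomorphisms, i.e. $T_E$ and $T'_E$ are mutually inverse auto-equivalences. Both functors are triangulated and, in the settings at hand (e.g.\ $\mathcal C$ the modules over a finite-dimensional algebra), admit adjoints on both sides; hence by the standard criterion their full faithfulness may be verified on a spanning class of $\Db(\mathcal C)$: it suffices that $T_E$, and symmetrically $T'_E$, induce isomorphisms $\Hom^n_{\Db(\mathcal C)}(\omega_1,\omega_2)\to\Hom^n_{\Db(\mathcal C)}(T_E\omega_1,T_E\omega_2)$ for all $n$ and all $\omega_1,\omega_2$ in such a class.

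First I would fix the spanning class $\Omega\coloneqq\{E\}\cup E^\perp$, where $E^\perp\coloneqq\{F\in\Db(\mathcal C):\Hom^*_{\Db(\mathcal C)}(E,F)=0\}$. That $\Omega$ spans uses (S1) together with the Calabi--Yau nondegeneracy \cref{def:spherical-object:calabi-yau}, which forces $\Hom^*_{\Db(\mathcal C)}(F,E)=0$ as soon as $\Hom^*_{\Db(\mathcal C)}(E,F)=0$; consequently, for $F\in E^\perp$ one gets $\Xi_EF=0=\Xi'_EF$ and hence canonical isomorphisms $T_EF\cong F$ and $T'_EF\cong F$. Next I would compute $T_E$ on $E$ itself. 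Over the field $k$ the complex $\hom^\bullet_{\mathcal C}(E,E)$ is quasi-isomorphic to its cohomology, which by (S2) is $k[x]/(x^2)\cong k\oplus k\hShift{-d}$, so $\Xi_EE=\hom^\bullet_{\mathcal C}(E,E)\otimes E\cong E\oplus E\hShift{-d}$; under this identification $\ev$ becomes a map whose $E$-component is $\id_E$ (evaluation at $\id_E$, hence invertible) and whose $E\hShift{-d}$-component is the spherical class $x$, and cancelling the invertible component yields $T_EE\cong E\hShift{1-d}$. Dually $T'_EE\cong E\hShift{d-1}$, so that $T_ET'_E$ and $T'_ET_E$ already fix $E$ up to the obvious shift bookkeeping.

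It then remains to check the four cases of the full-faithfulness criterion on $\Omega$. For $\omega_1,\omega_2\in E^\perp$ both sides are identified through $T_EF\cong F$, so there is nothing to do. If exactly one of $\omega_1,\omega_2$ is $E$ and the other lies in $E^\perp$, both Hom-spaces vanish: $\Hom^*(E,E^\perp)=0$ by definition and $\Hom^*(E^\perp,E)=0$ by \cref{def:spherical-object:calabi-yau}, and applying $T_E$ only replaces $E$ by the shift $E\hShift{1-d}$ and $E^\perp$-objects by themselves. The remaining, and genuinely hard, case is $\omega_1=\omega_2=E$, where one must show that $T_E$ induces an \emph{isomorphism} $\End^*_{\Db(\mathcal C)}(E)\to\End^*_{\Db(\mathcal C)}(T_EE)$; it is automatically a unital graded-algebra endomorphism of $k[x]/(x^2)$, hence of the form $x\mapsto cx$, and the content is to exclude $c=0$. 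This is exactly the point at which the Calabi--Yau property \cref{def:spherical-object:calabi-yau} is used essentially (to produce a nonzero class, not merely zeros): applying $\Hom^*(-,T_EE)$ and $\Hom^*(E,-)$ to the defining triangle $\Xi_EE\xrightarrow{\ev}E\to T_EE\to\Xi_EE\hShift{1}$ and chasing the resulting long exact sequences, the nondegenerate pairing identifies $T_E(x)$ with a generator, so $c\neq 0$. The symmetric argument gives full faithfulness of $T'_E$, and the two functors are therefore mutually inverse auto-equivalences. I expect this last case — pinning down the action of $T_E$ on the degree-$d$ endomorphism $x$ of $E$ — to be the main obstacle, the rest being routine manipulation of cones and long exact sequences.
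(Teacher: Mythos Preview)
The paper does not give its own proof of this proposition: it is quoted verbatim from \autocite[prop.\ 2.10]{seidel-thomas:braid-group-actions} and used as a black box. There is therefore nothing in the paper to compare your argument against.

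That said, your sketch is essentially the standard proof one finds in Seidel--Thomas (and, in slightly repackaged form, in Huybrechts' \emph{Fourier--Mukai Transforms}): use the adjunction $T'_E\dashv T_E$, take $\Omega=\{E\}\cup E^\perp$ as a spanning class---which works precisely because the Calabi--Yau pairing forces $E^\perp={}^\perp E$---and verify full faithfulness case by case on $\Omega$. Your identification of the one nontrivial case, namely that $T_E$ does not annihilate the degree-$d$ class $x\in\End^*_{\Db}(E)$, is correct, and this is indeed where sphericity (rather than mere spherelikeness) is essential.

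Two minor points. First, your shifts are swapped relative to the paper's conventions: \cref{rmk:twisting-E-by-itself} records $T'_EE\simeq E\hShift{1-d}$, whence $T_EE\simeq E\hShift{d-1}$; this is purely cosmetic. Second, the spanning-class criterion for full faithfulness requires the functor to admit both a left and a right adjoint, which you correctly flag as holding ``in the settings at hand'' but which is not automatic from the paper's standing hypothesis (a $k$-linear abelian category of finite global dimension) alone; Seidel--Thomas impose additional finiteness/enhancement hypotheses to secure this.
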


\begin{notation}
	\label{notation:total-complexes}
	Given a double (or triple) complex $X^{\bullet\bullet}$, we denote its $⊕$-total complex by $\{X^{\bullet\bullet}\}$.
	In particular, since we can regard a morphism $f\colon X\to Y$ of chain complexes
	trivially as a double complex with $Y$ placed in degree zero,
	the mapping cone can be written as $\cone(f)=\{f\colon X\to Y\}$.
	We occasionally write a zero below the complex placed in degree zero, so $\cone(f)=\{f\colon X\to \degZero{Y}\}$.
\end{notation}

\begin{remark}
	\label{rmk:twisting-E-by-itself}
	Since for every $d$-spherical object $E$,
	$\hom^\bullet_\mathcal C(E, E) \isom ⟨\id⟩_k⊕⟨x⟩_k \hShift{d}$ as complexes of vector spaces,
	we have
	\begin{equation*}
		T'_E E = \{\begin{psmallmatrix}1\\x\end{psmallmatrix}\colon \degZero{E}\to E⊕E \hShift{-d}\} \qis E\hShift{1-d}.
	\end{equation*}
\end{remark}

\begin{definition}
	A collection $\{E_1,\dotsc, E_n\}$ of $d$-spherical objects is an \term{$\mathrm{A}_n$-configuration}
	if $\dim\Hom^*_{\Db(\mathcal C)}(E_i, E_j) = \begin{smallcases}1 & \text{if $|i-j|=1$},\\0 & \text{otherwise}\end{smallcases}$.
\end{definition}

\begin{theorem}[{\autocite[thm.\ 2.17]{seidel-thomas:braid-group-actions}}]
	Given an $\mathrm{A}_n$-configuration $\{E_1,\dotsc, E_n\}$ of $d$-spherical objects,
	the assignment
	\begin{equation*}
		B_n\to\operatorname{Auteq}(\Db(\mathcal C)),\quad s_i \mapsto T_{E_i}
	\end{equation*}
	defines a weak action of the braid group.
\end{theorem}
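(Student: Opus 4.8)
The plan is to verify directly the two relations that define a weak action. That each $T_{E_i}$ is an auto-equivalence of $\Db(\mathcal C)$ is the preceding proposition, since every $E_i$ is $d$-spherical; as $B_n$ is presented by $s_1,\dotsc,s_{n-1}$ subject only to the commutation relations $s_is_j=s_js_i$ for $\lvert i-j\rvert\geq 2$ and the braid relations $s_is_{i+1}s_i=s_{i+1}s_is_{i+1}$, and a weak action demands nothing further, it remains to exhibit natural isomorphisms $T_{E_i}T_{E_j}\isom T_{E_j}T_{E_i}$ and $T_{E_i}T_{E_{i+1}}T_{E_i}\isom T_{E_{i+1}}T_{E_i}T_{E_{i+1}}$. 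The common engine is a \emph{conjugation formula}: for any auto-equivalence $\Phi$ of $\Db(\mathcal C)$ there is a natural isomorphism $\Phi\circ T_E\circ\Phi^{-1}\isom T_{\Phi E}$. To see it, use that $\Phi$ is triangulated, hence commutes with $\cone$; that $\Phi$, being $k$-linear and triangulated, satisfies $\Phi(V\otimes X)\isom V\otimes\Phi X$ for bounded complexes $V$ of finite-dimensional $k$-vector spaces (induction on the length of $V$ via the descriptions in \cref{rmk:lin-and-tensor}); and that $\Phi$ is fully faithful, whence $\hom^\bullet_\mathcal C(E,F)\qis\hom^\bullet_\mathcal C(\Phi E,\Phi F)$. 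Together these identify $\Phi\,\Xi_EF$ with $\Xi_{\Phi E}\Phi F$ compatibly with the evaluation maps of \cref{def:twist-cotwist}, so $\Phi T_EF\isom T_{\Phi E}\Phi F$.

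For the commutation relation, take $\lvert i-j\rvert\geq 2$, so that $\Hom^*_{\Db(\mathcal C)}(E_i,E_j)=0$ by the definition of an $\mathrm A_n$-configuration. Choosing a bounded projective representative of $E_i$, the complex $\hom^\bullet_\mathcal C(E_i,E_j)$ is acyclic, hence $\Xi_{E_i}E_j=\hom^\bullet_\mathcal C(E_i,E_j)\otimes E_i\qis 0$ and $T_{E_i}E_j=\cone(0\to E_j)\isom E_j$. Since isomorphic objects induce isomorphic twists, the conjugation formula gives $T_{E_i}T_{E_j}T_{E_i}^{-1}\isom T_{T_{E_i}E_j}\isom T_{E_j}$, that is, $T_{E_i}T_{E_j}\isom T_{E_j}T_{E_i}$.

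For the braid relation, a rearrangement reduces the claim to $T_{E_{i+1}}^{-1}T_{E_i}T_{E_{i+1}}\isom T_{E_i}T_{E_{i+1}}T_{E_i}^{-1}$, and applying the conjugation formula to each side ($\Phi=T_{E_{i+1}}^{-1}$ on the left, $\Phi=T_{E_i}$ on the right, with $T_{E_{i+1}}^{-1}\isom T'_{E_{i+1}}$) turns this into $T_{T'_{E_{i+1}}E_i}\isom T_{T_{E_i}E_{i+1}}$. Because $T_{X\hShift n}\isom T_X$ for all $n$ --- the shift of $E\hShift n$ cancels the one it induces in $\hom^\bullet_\mathcal C(E\hShift n,F)$ --- it suffices to show that $T_{E_i}E_{i+1}$ and $T'_{E_{i+1}}E_i$ differ only by a shift. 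Now $\Hom^*_{\Db(\mathcal C)}(E_i,E_{i+1})$ is one-dimensional, hence concentrated in a single degree; let $g$ be a cocycle in $\hom^\bullet_\mathcal C(E_i,E_{i+1})$ representing its generator, so that $kg\hookrightarrow\hom^\bullet_\mathcal C(E_i,E_{i+1})$ is a quasi-isomorphism. Then \cref{rmk:lin-and-tensor,rmk:evaluation-in-terms-of-shifted-copies} exhibit $\Xi_{E_i}E_{i+1}\qis kg\otimes E_i\isom E_i\hShift a$ with $\ev$ given by $g$, and $\Xi'_{E_{i+1}}E_i\qis\lin^\bullet(kg,E_{i+1})\isom E_{i+1}\hShift{-a}$ with $\ev'$ given by $g$ as well --- the two shifts being mutually dual because $\otimes$ and $\lin^\bullet$ enter here through a complex and its $k$-linear dual, which is the point. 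Hence $T_{E_i}E_{i+1}=\cone(g\colon E_i\hShift a\to E_{i+1})$ and $T'_{E_{i+1}}E_i=\cocone(g\colon E_i\to E_{i+1}\hShift{-a})\isom T_{E_i}E_{i+1}\hShift{-a-1}$, so $T_{T'_{E_{i+1}}E_i}\isom T_{T_{E_i}E_{i+1}}$ and the braid relation follows.

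The step I expect to be the main obstacle is this last computation: tracking the shifts contributed by $\hom^\bullet_\mathcal C$, $\otimes$ and $\lin^\bullet$ so that the cone and the cocone genuinely coincide up to one overall shift, and --- more delicately --- verifying that $\ev$ and $\ev'$ are nonzero \emph{in $\Db(\mathcal C)$}, not merely nonzero as chain maps. The latter is why one works with a bounded projective representative of $E_i$, over which $\Hom_{\Kb(\mathcal C)}=\Hom_{\Db(\mathcal C)}$ on the complexes at hand, so that the cocycle $g$, representing a nonzero cohomology class, is not null-homotopic. Once this is in place, the identity $T_{X\hShift n}\isom T_X$ removes any need to pin down the degree $a$, and, together with the commutation argument and the conjugation formula, the weak action of $B_n$ is established.
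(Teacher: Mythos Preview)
The paper does not prove this theorem; it merely cites it from Seidel--Thomas. Your argument is correct and is essentially the one given there: the conjugation formula $\Phi T_E\Phi^{-1}\cong T_{\Phi E}$ (their Lemma~2.11) reduces both relations to identifying certain objects up to shift, the commutation case being immediate from $\Hom^*(E_i,E_j)=0$, and the braid case from the one-dimensionality of $\Hom^*(E_i,E_{i+1})$. Your variant---comparing $T'_{E_{i+1}}E_i$ with $T_{E_i}E_{i+1}$---is equivalent to their computation that $T_{E_i}T_{E_{i+1}}E_i\cong E_{i+1}$ up to shift; both follow from the same cone/cocone identification you give, and your bookkeeping of the shifts is right. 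The caution you flag about $\ev$ being nonzero in $\Db(\mathcal C)$, not just as a chain map, is exactly the point handled by passing to a bounded projective representative, so there is no gap there either.
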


\subsection{Objective}
The category $\CatO_0$ is a $\Complex$-linear category of finite global dimension
and hence satisfies the requirements of \autocite{seidel-thomas:braid-group-actions}.
For $\mathfrak g=\SL_n$, we want to understand
whether there is an $\mathrm{A}_n$-configuration in $\Db(\CatO_0)$
such that the associated twist functors relate to the shuffling functors.

We shall show in \cref{thm:shuffling-is-twisting:sl2}
that for $\mathfrak g = \SL_2$, derived shuffling functors are in instance
of spherical twists for suitable objects,
and we shall develop the following generalisation.

For a parabolic subalgebra $\p \subseteq \mathfrak{sl}_n$
corresponding to a parabolic subgroup $W_\p \leq S_n$,
we write $\CatO^\p_0$ for the principal block
of the parabolic category $\CatO$,
see \cref{sec:parabolic-subalgebras}.
We shall prove the following:

\begin{theorem*}[{\ref{thm:main-result}}]
	For a maximal parabolic subalgebra $\p$ of $\mathfrak{sl}_n$
	corresponding to the parabolic subgroup $S_{n-1}×S_1\leq S_n$,
	the derived category $\Db(\CatO^\p_0)$ contains
	an $\mathrm{A}_{n-1}$-configuration of $0$-spherical objects
	such that the associated spherical twist functors
	and the restriction of $\LSh{s_i}\hShift{1}$ to $\Db(𝓞^𝔭_0)$
	are naturally isomorphic auto\-/equivalences of $\Db(𝓞^𝔭_0)$.
\end{theorem*}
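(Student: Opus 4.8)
\emph{Overview and reduction to a quiver.} The plan is to transport the problem to the explicit algebra presenting $\CatO^\p_0$, locate the spherical objects there, and identify the twist functors with the restricted derived shuffling functors by comparing their cone descriptions; this extends the $\mathfrak{sl}_2$-case \cref{thm:shuffling-is-twisting:sl2}. I would fix the equivalence of $\CatO^\p_0$ with the category of finite-dimensional modules over the quasi-hereditary algebra $A$ recalled in \cref{sec:parabolic-subalgebras} — $A$ being attached to the doubled $\mathrm{A}_n$-quiver on vertices $1,\dots,n$ (arrows both ways) with zig-zag-type relations — under which the indecomposable projectives $P^\p(w)$ become the $Ae_v$ and the standard duality of $\CatO^\p_0$ becomes $k$-duality of $A$-modules. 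All $\Hom$- and $\Ext$-computations below then take place in $A$. Let $E_1,\dots,E_{n-1}$ be the indecomposable projectives sitting at the $n-1$ vertices carrying a loop; these are exactly the indecomposable projective-injectives of $\CatO^\p_0$, and after reindexing they are the $P^\p(w)$ of the theorem, with $E_i$ attached to $s_i$.

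\emph{The configuration of $0$-spherical objects.} For a loop-carrying vertex $v$ one reads off $\End_{\CatO^\p_0}(E_i)=e_vAe_v\isom\Complex[x]/(x^2)$, with $x$ the loop. Since $E_i$ is projective, $\Hom^j_{\Db(\CatO^\p_0)}(E_i,F)=\Hom_{\CatO^\p_0}(E_i,H^{-j}F)$ and $\Hom^*_{\Db(\CatO^\p_0)}(F,E_i)$ has finite total dimension because $\CatO^\p_0$ has finite global dimension, which is \ref{def:spherical-object:finite-total-dimension}; and $\End^*_{\Db}(E_i)\isom\Complex[x]/(x^2)$ concentrated in degree $0$ is \ref{def:spherical-object:spherical-cohomology} with $d=0$. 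For the Calabi--Yau property \ref{def:spherical-object:calabi-yau} with $d=0$ and $E_i$ a module, the instances $j\neq0$ amount to $\Ext$-vanishings holding because $E_i$ is projective-injective, while the instance $j=0$ is the non-degeneracy of
\begin{equation*}
	\Hom_{\CatO^\p_0}(F,E_i)\otimes\Hom_{\CatO^\p_0}(E_i,F)\longto\End_{\CatO^\p_0}(E_i)/\Complex\cdot\id_{E_i}\isom\Complex ,
\end{equation*}
which, after rewriting the right-hand factor by self-duality of $E_i$ and a filtration argument along a composition series of $F$, reduces to $F$ simple, the only non-trivial instance $F=\mathrm{top}(E_i)=\soc(E_i)$ being settled by observing that $\soc(E_i)\into E_i\onto\mathrm{top}(E_i)$ is the generator of $\rad\End(E_i)$. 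Finally, since the $E_i$ are projective, $\Hom^*_{\Db}(E_i,E_j)=\Hom_{\CatO^\p_0}(E_i,E_j)$, and a path count in $A$ gives this dimension $1$ for $|i-j|=1$ and $0$ for $|i-j|\ge2$, so $\{E_1,\dots,E_{n-1}\}$ is an $\mathrm{A}_{n-1}$-configuration.

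\emph{The twist is the restricted shuffling functor.} First, $\LSh{s_i}\hShift1$ restricts to an auto-equivalence of $\Db(\CatO^\p_0)$: tensoring with a finite-dimensional $\mathfrak g$-module preserves local $\mathfrak p$-finiteness, so the translation functors, hence $\Theta_{s_i}$, hence $\Sh{s_i}$, preserve the Serre subcategory $\CatO^\p_0\subseteq\CatO_0$, while $\Db(\CatO^\p_0)\into\Db(\CatO_0)$ is fully faithful. Then one compares the two cone descriptions $\LSh{s_i}\isom\cone(\unit_{s_i}\colon\id\Rightarrow\Theta_{s_i})$ (from the text) and $T_{E_i}=\cone(\ev\colon\Xi_{E_i}\Rightarrow\id)$, where on $\CatO^\p_0$ one has $\Xi_{E_i}F=\Hom_{\CatO^\p_0}(E_i,F)\otimes_\Complex E_i$, coinciding with the derived $\Xi_{E_i}$ because $E_i$ is projective. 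Exploiting the self-adjointness of $\Theta_{s_i}$ and the self-duality of $E_i$ to pass between $\unit_{s_i},\counit_{s_i}$ and $\ev,\ev'$, one computes both functors on a triangulated generating set — conveniently the parabolic Verma modules $M^\p(w)$ or the simples $L^\p(w)$ — using \cref{eqn:translation-defining-ses} and projective resolutions for $\LSh{s_i}$ and the description of $\Xi_{E_i}$ for $T_{E_i}$, all in terms of $A$, and checks that the outputs and the induced maps agree. Since $\CatO^\p_0$ has finite global dimension and both $T_{E_i}$ and $\LSh{s_i}\hShift1$ are standard (Fourier--Mukai) triangulated auto-equivalences, they are determined by this data, so the comparison promotes to a natural isomorphism $T_{E_i}\isom\LSh{s_i}\hShift1$ on $\Db(\CatO^\p_0)$. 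Combined with the previous step and \autocite[thm.\ 2.17]{seidel-thomas:braid-group-actions}, this in particular realises Rouquier's weak $B_n$-action on $\Db(\CatO^\p_0)$ as a Seidel--Thomas action.

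\emph{Where the difficulty lies.} The crux — and, I expect, the main obstacle — is that last step: matching the cone of $\ev\colon\Xi_{E_i}\Rightarrow\id$ with (a shift of) the cone of $\unit_{s_i}\colon\id\Rightarrow\Theta_{s_i}$ and upgrading an objectwise comparison to an isomorphism of functors. It is precisely maximality of $\p$ that makes this feasible: it keeps the $\Hom$/$\Ext$-bookkeeping in $A$ small enough to run by hand and forces the two constructions to coincide, whereas for non-maximal parabolics no such identification is available.
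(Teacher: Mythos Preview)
Your overall architecture matches the paper's: pass to the explicit path-algebra description of $\CatO^\p_0$, identify the $n-1$ non-initial indecomposable projectives as the spherical objects, verify the $\mathrm{A}_{n-1}$-configuration by a path count, and then compare the cone presentation of the spherical (co)twist with that of $\LSh{s_i}\hShift{1}$. Your sphericity argument via projective-injectivity and self-duality is a perfectly good substitute for the paper's explicit check of the composition pairing on the neighbouring indecomposable projectives.

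There is, however, a genuine gap in your final step. You propose to compute both functors on a triangulated generating set (parabolic Vermas or simples), observe that the outputs and induced maps agree, and then promote this to a natural isomorphism by appeal to both functors being ``standard (Fourier--Mukai)''. That last move is not justified as written: an objectwise coincidence on generators, even including morphisms, does not by itself yield a natural isomorphism of triangulated functors, and the Fourier--Mukai language is not the right framework here. The paper closes this gap differently and more cleanly. It works at the level of the underlying \emph{abelian} functors: both $\Theta_{s_i}$ and $\Xi'_{E_i}$ are right exact endofunctors of $\Mod{}[A^\p]$ commuting with direct sums, hence by Morita's lemma each is $-\otimes_{A^\p} M$ for an explicit $A^\p$-$A^\p$-bimodule $M$. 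One then writes down $\Complex$-bases of $M_{\Theta_{s_i}} = \Hom_{\CatO}(P^\p,\Theta_{s_i}P^\p)$ and of $M_{\Xi'_{E_i}} \cong E_i^{*}\otimes_{\Complex} E_i$ and checks by hand that the obvious vector-space isomorphism is an $A^\p$-$A^\p$-bimodule isomorphism; because $\Hom_{\CatO^\p}(P^\p(\sigma_j),P^\p(\sigma_i))=0$ for $|i-j|\geq 2$, this reduces to the same $16$-dimensional computation already carried out for $\mathfrak{sl}_3$, with $e,s,st$ replaced by $\sigma_{i-1},\sigma_i,\sigma_{i+1}$. The natural isomorphism $\Theta_{s_i}\cong\Xi'_{E_i}$ then gives a natural isomorphism of cones, hence $\LSh{s_i}\hShift{1}\cong T'_{E_i}$. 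If you want to salvage your outline, the right move is to check on the \emph{projective generator} $P^\p$ (not on Vermas or simples) together with its endomorphisms---that is exactly the bimodule comparison.

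Two smaller points. First, the paper identifies $\LSh{s_i}\hShift{1}$ with the \emph{cotwist} $T'_{E_i}=\cocone(\ev'\colon\id\Rightarrow\Xi'_{E_i})$, not with the twist $T_{E_i}=\cone(\ev\colon\Xi_{E_i}\Rightarrow\id)$ as you write; these are mutually inverse, so your version as stated would match $\RCsh{s_i}\hShift{-1}$ rather than $\LSh{s_i}\hShift{1}$. Second, the paper's quiver has no loops: the nilpotent endomorphism $x$ of $E_i$ is the length-two path $\sigma_i\!\leftarrow\!\sigma_{i\pm1}\!\leftarrow\!\sigma_i$, not a loop at $\sigma_i$.
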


\subsection*{Outline}
\Cref{sec:tools-for-O} gives an overview of some of the most important properties
of blocks of $\CatO$
and the tools we employ.
We do not require any prior knowledge about $\CatO$.
We include a short refresher on Kazhdan-Lusztig theory, quivers and graded algebras.

In \Cref{sec:sl2-case}, we explain how to compute images of the shuffling functors
for the special case $\mathfrak g=\SL_2$ explicitly.
The proof of \cref{thm:shuffling-is-twisting:sl2} 
will serve as a model for the proof of the more general \cref{thm:main-result},
which is worked out in \cref{sec:sln-case}.

\section{Some theory of the category \texorpdfstring{$\CatO_\lambda$}{𝓞}}
\label{sec:tools-for-O}
In this section, we collect the most important properties of $\CatO_\lambda$.
For simplicity, we always assume that $\lambda$ is an integral $\rho$-dominant weight.

\subsection{Composition series}
\label{sec:composition-series}
Every module $M\in\CatO_\lambda$ admits a composition series
\begin{equation*}
	0\subsetneq M_1\subsetneq\cdots\subsetneq M_\ell=M
\end{equation*}
with subquotients $M_i/M_{i-1}$ isomorphic to the simple modules $L(w\cdot \lambda)$ 
\autocite[Satz 1.13]{Jantzen:Moduln-mit-hoechstem-Gewicht}.
According to the Jordan-Hölder theorem, any composition series for $M$
involves the same isomorphism classes of the simple factors,
up to their order of appearance.
The multiplicity of $L(w\cdot \lambda)$ in any composition series for $M$
is denoted by $[M:L(w\cdot\lambda)]$.
There is another important filtration:

\begin{definition}
	\label{def:loewy-filtration}
	A \term{Loewy filtration} of a module $M\in\CatO_0$ is a filtration
	of minimal length with semisimple subquotients $M_i$.
	A module $M$ is called \emph{rigid} if it has a unique Loewy filtration;
	for instance, Verma modules are rigid \autocite[thm.\ 1]{Irving:socle-filtration}.
	The \term{socle} $\soc M$ and the \term{head} $\operatorname{hd} M$ of a module $M$
	are its maximal semisimple submodule and quotient, respectively.
\end{definition}

\begin{notation}
	We write Loewy filtrations diagrammatically 
	by putting all simple summands of each semisimple subquotient into a common horizontal layer,
	from the socle in the bottom line to the head in the top line.
\end{notation}

\begin{example}
		For $\mathfrak g=\SL_2$, the non-simple Verma module has a composition series (and also Loewy filtration)
		$M(e)=\begin{psmallmatrix}L(e)\\L(s)\end{psmallmatrix}$,
		which is just another way to say that there is a short exact sequence
		$0\to\nolinebreak[2] L(s)\to M(e)\to L(e)\to 0$.
		
		For $\mathfrak g=\SL_3$, 
		the Verma module $M(s)=\begin{psmallmatrix}L(s)\\L(st)\quad L(ts)\\L(w_0)\end{psmallmatrix}$
		has the simple quotient $L(s)$ and the simple submodule $L(w_0)$.
		Its non-trivial submodules have Loewy filtrations
		$L(w_0)$,
		$\begin{psmallmatrix}L(st)\\L(w_0)\end{psmallmatrix}$, $\begin{psmallmatrix}L(ts)\\L(w_0)\end{psmallmatrix}$
		and $\begin{psmallmatrix}L(st)\quad L(ts)\\L(w_0)\end{psmallmatrix}$.
\end{example}
\begin{remark}
	For a morphism $f\colon M \to N$ in $\CatO$
	there exist composition series of $M$ and $N$
	contiguously containing a composition series of $\im f$
	at the top and the bottom, respectively.
	The cokernel and kernel of $f$ comprise the composition factors
	of $M$ and $N$ not belonging to $\im f$, respectively.
	For example, for $\mathfrak g = \mathfrak{sl}_2$,
	we can write the short exact sequence $0 \to M(e) \to P(s) \to M(s) \to 0$ diagrammatically as
	\begin{equation*}%
		\begin{matrix}%
			&& L(s) & \onto & \smash{\underbrace{L(s)}_{M(s).}}\\
			\underbrace{ \begin{array}{@{}c@{}} L(e)\\L(s) \end{array} }_{M(e)}%
			& \into & 
			\underbrace{ \begin{array}{@{}c@{}} L(e)\\ L(s) \end{array} }_{P(s)}%
		\end{matrix}%
	\end{equation*}
	Since $\dim\Hom_\CatO(L(v), L(w))=\delta_{vw}$,
	a morphism is determined by a scalar
	for each pair of composition factors from its domain and its codomain.
\end{remark}

Certain modules, such as the indecomposable projectives $P(w\cdot\lambda)$,
also admit a \term{standard filtration} whose subquotients are isomorphic to Verma modules $M(v\cdot\lambda)$.
We write $(P(w\cdot\lambda) : M(v\cdot\lambda))$ for the (unique) multiplicity
of $M(v\cdot\lambda)$ in a standard filtration of $P(w\cdot\lambda)$.

\begin{theorem}[\thmtitle{BGG reciprocity} \autocite{BGG}]
	$(P(v\cdot\lambda):M(w\cdot\lambda))=[M(w\cdot\lambda):L(v\cdot\lambda)]$.
\end{theorem}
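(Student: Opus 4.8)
The plan is to compute $\dim_{\Complex}\Hom_{\CatO_\lambda}\bigl(P(v\cdot\lambda), M(w\cdot\lambda)^\vee\bigr)$ in two different ways, where $M(w\cdot\lambda)^\vee$ denotes the dual Verma module obtained by applying to $M(w\cdot\lambda)$ the standard contravariant duality on $\CatO_\lambda$ referred to above. First I would record the properties of this duality that are needed: it is exact, it fixes each simple module ($L(w\cdot\lambda)^\vee\isom L(w\cdot\lambda)$), and hence it preserves composition multiplicities, $[M^\vee:L(v\cdot\lambda)]=[M:L(v\cdot\lambda)]$; consequently $M(w\cdot\lambda)^\vee$ has simple socle $L(w\cdot\lambda)$, the same formal character as $M(w\cdot\lambda)$, and all of its weights are $\leq w\cdot\lambda$.

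For the first computation I would use that $P(v\cdot\lambda)$ is the projective cover of $L(v\cdot\lambda)$. Then $\Hom_{\CatO_\lambda}(P(v\cdot\lambda),-)$ is exact, and $\Hom_{\CatO_\lambda}(P(v\cdot\lambda),L(u\cdot\lambda))$ is $\Complex$ if $u=v$ and $0$ otherwise; applied to a composition series of an arbitrary $N\in\CatO_\lambda$ this gives $\dim\Hom_{\CatO_\lambda}(P(v\cdot\lambda),N)=[N:L(v\cdot\lambda)]$. In particular $\dim\Hom_{\CatO_\lambda}\bigl(P(v\cdot\lambda),M(w\cdot\lambda)^\vee\bigr)=[M(w\cdot\lambda)^\vee:L(v\cdot\lambda)]=[M(w\cdot\lambda):L(v\cdot\lambda)]$.

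For the second computation I would use a standard filtration of $P(v\cdot\lambda)$ together with the orthogonality statements $\dim\Hom_{\CatO_\lambda}\bigl(M(u\cdot\lambda),M(w\cdot\lambda)^\vee\bigr)=\delta_{uw}$ and $\Ext^1_{\CatO_\lambda}\bigl(M(u\cdot\lambda),M(w\cdot\lambda)^\vee\bigr)=0$. The $\Hom$-part is a weight argument: a nonzero map sends the generator of $M(u\cdot\lambda)$ to a nonzero $\mathfrak n$-invariant vector of weight $u\cdot\lambda$ in $M(w\cdot\lambda)^\vee$, which generates a highest weight submodule of highest weight $u\cdot\lambda$; this submodule must contain the socle $L(w\cdot\lambda)$, so $w\cdot\lambda\leq u\cdot\lambda$, while $u\cdot\lambda\leq w\cdot\lambda$ because $w\cdot\lambda$ is the top weight of $M(w\cdot\lambda)^\vee$; hence $u=w$, and the relevant weight space is one-dimensional. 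Granting the $\Ext^1$-vanishing, applying $\Hom_{\CatO_\lambda}(-,M(w\cdot\lambda)^\vee)$ to a standard filtration $0=P_0\subset\dotsb\subset P_r=P(v\cdot\lambda)$ with $P_i/P_{i-1}\isom M(w_i\cdot\lambda)$ makes every restriction map $\Hom_{\CatO_\lambda}(P_i,M(w\cdot\lambda)^\vee)\to\Hom_{\CatO_\lambda}(P_{i-1},M(w\cdot\lambda)^\vee)$ surjective with kernel $\Hom_{\CatO_\lambda}(M(w_i\cdot\lambda),M(w\cdot\lambda)^\vee)$, and induction on $r$ gives $\dim\Hom_{\CatO_\lambda}\bigl(P(v\cdot\lambda),M(w\cdot\lambda)^\vee\bigr)=\#\{i:w_i=w\}=(P(v\cdot\lambda):M(w\cdot\lambda))$.

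Comparing the two expressions yields the reciprocity. The hard part is the input $\Ext^1_{\CatO_\lambda}\bigl(M(u\cdot\lambda),M(w\cdot\lambda)^\vee\bigr)=0$ — equivalently, the fact that the Verma modules and the dual Verma modules are the standard and costandard objects of a highest weight (quasi-hereditary) structure on $\CatO_\lambda$, for which $\Ext^{>0}$ between standards and costandards vanishes; I would cite this (\autocite{Humphreys:CatO}) or, if a self-contained argument is wanted, deduce it from the existence of standard filtrations of the projectives. The existence of a standard filtration of $P(v\cdot\lambda)$ itself is used as well, but is already granted above.
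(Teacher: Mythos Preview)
The paper does not give a proof of this theorem; it is stated with a citation to the original paper \autocite{BGG} and then used as input for the rest of the exposition. There is therefore nothing to compare against.

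Your argument is correct and is the standard textbook proof (essentially the one in \autocite[§3.11]{Humphreys:CatO}): compute $\dim\Hom_{\CatO}(P(v\cdot\lambda),M(w\cdot\lambda)^\vee)$ once using projectivity of $P(v\cdot\lambda)$ and once using a standard filtration together with the orthogonality $\Hom(M(u\cdot\lambda),M(w\cdot\lambda)^\vee)\cong\Complex^{\delta_{uw}}$, $\Ext^1(M(u\cdot\lambda),M(w\cdot\lambda)^\vee)=0$. Two small remarks. First, the $\Ext^1$-vanishing you flag can be proved directly by a highest-weight argument (lift a maximal vector through the extension and split it), independently of standard filtrations of projectives; so there is no risk of circularity, and you need not view it as a black box. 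Second, in your $\Hom$-computation the step ``$u=w$'' should read ``$u\cdot\lambda=w\cdot\lambda$''; this is what the weight argument gives, and it is all that is needed since the multiplicities on both sides of the reciprocity are indexed by weights in the orbit $W_\lambda\cdot\lambda$.
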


\subsection{Kazhdan-Lusztig theory}
\label{sec:kl-theory}
There is a $W^2$-parametrised collection $\{p_{vw}\}$
of polynomials in $\mathbf Z[q^{\pm 1}]$,
called \term{Kazhdan-Lusztig polynomials},
which occur as base change coefficients between the standard basis and the Kazhdan-Lusztig basis
(usually denoted, respectively, by $\{T_w\}$ and $\{C_w\}$ for $w \in W$)
of the \term{Iwahori-Hecke algebra} $\mathsf{H}_q(W)$
\autocites{KL:Representations-of-Coxeter-groups}{Soergel:KL-polynomials},
which is a $\mathbf{Z}[q^{\pm 1/2}]$-algebra and a deformation of the group algebra $\mathbf{Z}W$.
The following relation to composition factor multiplicities
has been conjectured in \autocite[conjecture 1.5]{KL:Representations-of-Coxeter-groups}:

\begin{theorem}[\thmtitle{Kazhdan-Lusztig conjecture} {\autocites[§4]{BB:localisation-de-g-modules}[§8]{BK:KL-conjecture}}]
	\label{thm:BGG-reciprocity}
	The composition factor multiplicities in a block $\CatO_\lambda$
	for a regular weight $\lambda$ are given by
	\begin{equation}
		\label{eqn:BGG-reciprocity}
		(P(v\cdot\lambda):M(w\cdot\lambda)) = [M(w\cdot\lambda):L(v\cdot\lambda)] = p_{vw}(1).
	\end{equation}
\end{theorem}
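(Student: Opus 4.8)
The plan is to combine BGG reciprocity (stated above) with the geometric input from which the Kazhdan--Lusztig conjecture takes its name. BGG reciprocity immediately reduces the assertion to the single equality $[M(w\cdot\lambda):L(v\cdot\lambda)] = p_{vw}(1)$; since all blocks $\CatO_\lambda$ attached to regular integral weights are equivalent as categories, we may as well prove this in the principal block $\CatO_0$, where it reads $[M(w):L(v)] = p_{vw}(1)$.

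First I would invoke Beilinson--Bernstein localisation \autocite{BB:localisation-de-g-modules}: for regular integral $\lambda$, the block $\CatO_\lambda$ is equivalent to a category of ($\lambda$-twisted) $\mathcal D$-modules on the flag variety $G/B$ that are smooth along the Bruhat stratification $G/B = \bigsqcup_{w\in W} C_w$, $C_w \coloneqq BwB/B$. Via the Riemann--Hilbert correspondence this is in turn equivalent to the category of perverse sheaves on $G/B$ constructible with respect to the same stratification. Under this chain of equivalences the Verma module $M(w)$ corresponds to the standard extension of the (shifted) constant sheaf from the cell $C_w$, and the simple module $L(v)$ corresponds to the intersection cohomology complex $IC_v$ of the Schubert variety $\overline{C_v}$.

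Next I would translate the multiplicity. In the Grothendieck group of the perverse category, $[M(w):L(v)]$ becomes the coefficient of $[IC_v]$ in the class of the standard sheaf attached to $w$; unravelling this in terms of stalks, it equals the alternating sum $\sum_i (-1)^{i}\dim \mathcal H^{i}(IC_v)_x$ (up to a shift depending on $\ell(w)$) for any $x\in C_w$, \ie the value at $q=1$ of the Poincaré polynomial of the stalk cohomology of $IC_v$ along the cell $C_w$. The heart of the matter, the Kazhdan--Lusztig \emph{geometric} conjecture proved in \autocite{BK:KL-conjecture} (building on Deligne's purity theorems for $\ell$-adic sheaves), is precisely that this stalk Poincaré polynomial is $p_{vw}(q)$; specialising at $q=1$ then yields the claim.

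The main obstacle is exactly this last step: the identification of the stalk cohomology of IC sheaves of Schubert varieties with Kazhdan--Lusztig polynomials is the deep geometric content and is not proved by elementary means --- we take it, together with the localisation and Riemann--Hilbert dictionary matching Vermas with standard sheaves and simples with IC sheaves, from \autocites{BB:localisation-de-g-modules}{BK:KL-conjecture}. Everything else is formal bookkeeping: the passage from $\CatO_\lambda$ to $\CatO_0$, the BGG reciprocity reduction, and the Grothendieck-group computation expressing a composition multiplicity through IC stalks.
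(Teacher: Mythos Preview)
The paper does not prove this theorem at all: it is stated as a result from the literature, with the proof deferred entirely to the cited works \autocites{BB:localisation-de-g-modules}{BK:KL-conjecture}. Your outline is a faithful sketch of what those references actually do --- Beilinson--Bernstein localisation to pass to $\mathcal D$-modules on $G/B$, Riemann--Hilbert to perverse sheaves, the identification of Vermas with standard objects and simples with IC sheaves, and finally the purity/decomposition input identifying IC stalk polynomials with Kazhdan--Lusztig polynomials --- so in that sense you have supplied more than the paper does. There is nothing to compare against on the paper's side beyond the citations themselves.
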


\begin{remark}
	\label{rmk:conventions-kl-polynomials}
	If one indexes blocks by $\rho$-antidominant rather than $\rho$-dominant weights,
	\cref{eqn:BGG-reciprocity} takes the form $[M(w\cdot\lambda):L(v\cdot\lambda)] = p_{w_0v,w_0w}(1)$.
	Depending on whether one uses the basis elements $C_w$ or $C'_w$ for the Kazhdan-Lusztig basis%
	\footnote{See \autocite[after thm.\ 1.1]{KL:Representations-of-Coxeter-groups} for their relation.},
	$T_w$ or $H_w = q^{\ell(w)/2} T_w$ for the natural basis,
	$q$ or $v = q^{-1/2}$ for the formal indeterminate,
	and $p_{vw}$ or $h_{vw} = q^{-2(\ell(w) - \ell(v))} p_{vw}$ for the Kazhdan-Lusztig polynomials,
	the precise statement exhibiting the $p_{vw}$'s as base change coefficients may look different
	(see also \autocite[2.6]{Soergel:KL-polynomials}).
\end{remark}

\subsection{Gradings}
\label{sec:gradings}
We summarise how to pass from $\CatO_\lambda$
to a graded category $\gCatO_\lambda$.

The category $\CatO_\lambda$ has a projective generator $P_\lambda ≔ \bigoplus_{w\in W/W_\lambda} P(w\cdot\lambda)$,
for which we define the algebra $A_\lambda \coloneqq \End_\CatO(P_\lambda)$.
By Morita's theorem \autocite[Thm.\ II.1.3]{Bass:K-theory},
there is an equivalence of categories
\begin{equation}
	\label{eqn:morita-eq}
	\CatO_\lambda\xrightarrow{\simeq}\Mod{}[A_\lambda],\quad M \mapsto\Hom_\CatO(P_\lambda, M).
\end{equation}

There exists a natural grading on $A_\lambda$, given as follows
\autocite[\S\S 1.3, 2]{stroppel:gradings-and-translation}.

\begin{theorem}[\thmtitle{Struktursatz}{ \autocite[thm.\ 2]{Soergel:perverse-Garben}}]
	\label{thm:struktursatz}
	The functor $\mathbf V_\lambda\colon\CatO_\lambda \to\Mod{}[\End_\CatO(P(w_0 \cdot \lambda))]$,
	called the \emph{combinatoric functor}, is fully faithful on projectives.
\end{theorem}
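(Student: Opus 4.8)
The plan is to prove that $\mathbf V_\lambda = \Hom_\CatO(P(w_0\cdot\lambda), -)$ is fully faithful on projectives by reducing the claim, via translation functors, to the case of the antidominant projective $P(w_0\cdot\lambda)$, for which it is essentially tautological. First I would recall the key structural facts about $P(w_0\cdot\lambda)$: it is the big projective, it is projective-injective, it carries a standard (Verma) filtration, and — crucially — every indecomposable projective $P(w\cdot\lambda)$ is a direct summand of $\Theta_{w} P(w_0\cdot\lambda)$ for a suitable product $\Theta_w$ of wall-crossing functors $\Theta_s$ (equivalently, of translation functors $T$). Since translation and wall-crossing functors are exact, preserve projectives, and — this is the heart of the matter — are \emph{adjoint to functors of the same kind}, applying $\mathbf V_\lambda$ intertwines $\Hom$ on $\CatO_\lambda$ with $\Hom$ over the endomorphism ring $C_\lambda \coloneqq \End_\CatO(P(w_0\cdot\lambda))$ of the big projective.

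The key steps, in order:
\begin{prooflist}
	\item Establish that $\mathbf V_\lambda$ is exact (it is $\Hom$ out of a projective) and that $\mathbf V_\lambda P(w_0\cdot\lambda) = C_\lambda$ is the free rank-one $C_\lambda$-module, so $\mathbf V_\lambda$ is automatically fully faithful on the additive category generated by $P(w_0\cdot\lambda)$.
	\item Show that for a wall-crossing functor $\Theta_s$ one has a natural isomorphism
	\begin{equation*}
		\Hom_\CatO\bigl(\Theta_s M, P(w_0\cdot\lambda)\bigr) \isom \Hom_\CatO\bigl(M, \Theta_s P(w_0\cdot\lambda)\bigr)
	\end{equation*}
	using self-adjointness of $\Theta_s$, and dually; iterate to handle arbitrary composites $\Theta_w$. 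This yields a natural isomorphism $\mathbf V_\lambda \Theta_s \isom (\text{a functor on } C_\lambda\text{-modules}) \circ \mathbf V_\lambda$ — concretely, $\mathbf V_\lambda \Theta_s \isom \Hom_{C_\lambda}(\mathbf V_\lambda \Theta_s P_\lambda^{\mathrm{proj}}, -)\circ\mathbf V_\lambda$ type statement — so that $\mathbf V_\lambda$ commutes with the relevant functors up to the combinatorial bookkeeping.
	\item Combine: given indecomposable projectives $P(u\cdot\lambda), P(v\cdot\lambda)$, write the first as a summand of $\Theta_u P(w_0\cdot\lambda)$. Then
	\begin{equation*}
		\Hom_\CatO\bigl(\Theta_u P(w_0\cdot\lambda), P(v\cdot\lambda)\bigr) \isom \Hom_\CatO\bigl(P(w_0\cdot\lambda), \Theta_u P(v\cdot\lambda)\bigr),
	\end{equation*}
	and apply Step 1 together with the fact that $\Theta_u P(v\cdot\lambda)$ is again projective, hence a sum of copies of objects already understood. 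Full faithfulness on a set of additive generators, compatibly with the idempotents cutting out the summands, gives full faithfulness on all of the projectives.
\end{prooflist}

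The main obstacle is Step 2: one must be careful that the adjunction isomorphisms are \emph{natural in $M$} and compatible with composition, so that they assemble into an honest natural isomorphism of functors rather than a mere bijection on $\Hom$-sets for each fixed pair of objects — this is what is needed to transport the idempotent decompositions of $\Theta_u P(w_0\cdot\lambda)$ through $\mathbf V_\lambda$. A secondary subtlety is verifying that every indecomposable projective really does appear as a summand of some $\Theta_w P(w_0\cdot\lambda)$; this rests on the behaviour of wall-crossing functors on Verma flags and on BGG reciprocity, both recalled above. Once naturality is pinned down, the reduction is formal. (I should note this is Soergel's theorem and I am only sketching the standard proof strategy; the original argument in \autocite{Soergel:perverse-Garben} is the authoritative reference.)
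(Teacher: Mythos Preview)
The paper does not prove this theorem; it is stated with a citation to Soergel and used as a black box. So there is no ``paper's own proof'' to compare against. That said, your sketch contains a genuine error worth flagging.

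Your reduction strategy hinges on the claim that every indecomposable projective $P(u\cdot\lambda)$ is a direct summand of $\Theta_u P(w_0\cdot\lambda)$. This is false. The antidominant projective $P(w_0\cdot\lambda)$ is projective\emph{-injective}, and since $\Theta_s$ is exact and self-adjoint it preserves both projectivity and injectivity; hence $\Theta_s P(w_0\cdot\lambda)$ is again projective-injective, and in a regular block the only indecomposable projective-injective is $P(w_0\cdot\lambda)$ itself. Concretely $\Theta_s P(w_0\cdot\lambda)\cong P(w_0\cdot\lambda)^{\oplus 2}$, and iterating never produces any other $P(u\cdot\lambda)$ as a summand. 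The correct statement --- which the paper recalls right after the theorem --- is that $P(w\cdot\lambda)$ is a summand of $\Theta_{s_{i_n}}\dotsm\Theta_{s_{i_1}} P(e\cdot\lambda)=\Theta_{s_{i_n}}\dotsm\Theta_{s_{i_1}} M(\lambda)$ for a reduced expression $w=s_{i_1}\dotsm s_{i_n}$, i.e.\ one builds projectives up from the \emph{dominant} Verma, not down from the antidominant projective.

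With that correction the outline is closer to Soergel's actual argument, but your Step~2 is still too vague to carry the weight you place on it. The substantive content is not merely ``$\Theta_s$ is self-adjoint'' but the identification $\mathbf V_\lambda\Theta_s\cong C\otimes_{C^s}\mathbf V_\lambda$ as functors to $C$-modules, which already requires the Endomorphismensatz $\End_\CatO(P(w_0\cdot\lambda))\cong C$. Only once this intertwining is in place can one transport the adjunctions to the $C$-module side and run the induction; the base case then becomes a statement about $\mathbf V_\lambda M(\lambda)\cong\Complex$, not about $P(w_0\cdot\lambda)$.
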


The algebra $\End_\CatO(P(w_0 \cdot \lambda))$ can be described explicitly:
The ordinary (non-dotted) action of the Weyl group $W$ on $\mathfrak{h}$
gives rise to an action of $W$ on the symmetric algebra $\Complex[\mathfrak h^*]$.
Let $(\Complex[\mathfrak h^*]_+^W)$ denote its ideal generated 
by $W$-invariant polynomials without constant term
and $C≔\Complex[\mathfrak h^*]/(\Complex[\mathfrak h^*]_+^W)$ 
be the \emph{coinvariant algebra} of $\mathfrak g$.

\begin{theorem}[\thmtitle{Endomorphismensatz} {\autocite[thm.\ 3]{Soergel:perverse-Garben}}]
	$\End_\CatO(P(w_0\cdot\lambda))$ is isomorphic to the invariant subalgebra $C^{W_\lambda}$.
\end{theorem}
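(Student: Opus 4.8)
\emph{Proof strategy.} The statement is Soergel's Endomorphismensatz, and the plan is to prove it first for a regular weight $\lambda$ (so $W_\lambda=\{e\}$) and then reduce the general case to that one; the real content sits in a deformation argument. Assume first $W_\lambda=\{e\}$. Then $P(w_0\cdot\lambda)$ is the \emph{big projective}, the projective cover of the antidominant simple $L(w_0\cdot\lambda)$, which is moreover self-dual and projective-injective. I would begin by pinning down the dimension: since $\Hom_\CatO(P(w_0\cdot\lambda),-)$ is exact and $L(w_0\cdot\lambda)$ is the head of $P(w_0\cdot\lambda)$, one has $\dim_\Complex\End_\CatO(P(w_0\cdot\lambda))=[P(w_0\cdot\lambda):L(w_0\cdot\lambda)]$; expanding this along a standard filtration and using BGG reciprocity together with the classical fact that $L(w_0\cdot\lambda)$ occurs exactly once in every $M(w\cdot\lambda)$, it equals $\sum_{w\in W}[M(w\cdot\lambda):L(w_0\cdot\lambda)]^2=|W|=\dim_\Complex C$. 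It therefore suffices to produce a surjective algebra homomorphism, in either direction, between $\End_\CatO(P(w_0\cdot\lambda))$ and $C$.

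No such map is visible inside $\CatO_\lambda$ itself --- the centre of $U(\mathfrak g)$, the obvious candidate, acts by scalars on a block --- so I would pass to Soergel's deformed category. Writing $\tilde R$ for the localisation of $\Complex[\mathfrak h^*]=S(\mathfrak h)$ at the origin $0\in\mathfrak h^*$, one has deformed Verma modules $\tilde M(\mu)=U(\mathfrak g)\otimes_{U(\mathfrak b)}\tilde R_\mu$ (with $h\in\mathfrak h$ acting by $\mu(h)+h$), which are free over $\tilde R$ with $\End(\tilde M(\mu))\cong\tilde R$, and the relevant deformed block contains a deformed big projective $\tilde P$, free of rank $|W|$ over $\tilde R$, carrying a flag with each $\tilde M(\mu)$ once. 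Two points then have to be checked: (a) a base-change lemma, $\End_\CatO(P(w_0\cdot\lambda))\cong\End(\tilde P)\otimes_{\tilde R}\Complex$; and (b) the identification $\End(\tilde P)\cong\tilde R\otimes_{\tilde R^W}\tilde R$. For (b), $\End(\tilde P)$ is free of rank $|W|$ over $\tilde R$ ($\tilde P$ being projective and $\tilde R$-free), hence torsion-free, so it embeds into its fibre over $\operatorname{Frac}(\tilde R)$; there the deformed block is semisimple, the fibre is a product of $|W|$ copies of $\operatorname{Frac}(\tilde R)$, and in particular $\End(\tilde P)$ is commutative. One then pins it down at each reflection hyperplane by an explicit computation with the wall-crossing functors $\Theta_s$ --- the deformed analogue of the exact sequences~\cref{eqn:translation-defining-ses} --- which exhibits the local ring there as $\tilde R\otimes_{\tilde R^s}\tilde R$, and globalises this by a reflexivity/normality argument. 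Granting (a) and (b), specialisation at $0$ yields $\End_\CatO(P(w_0\cdot\lambda))\cong(\tilde R\otimes_{\tilde R^W}\tilde R)\otimes_{\tilde R}\Complex\cong\Complex\otimes_{\tilde R^W}\tilde R\cong\Complex[\mathfrak h^*]/(\Complex[\mathfrak h^*]^W_+)=C$, settling the regular case.

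For general $\lambda$ one has $C\cong\Complex W$ as $W$-modules, so $\dim_\Complex C^{W_\lambda}=|W/W_\lambda|$, and the same dimension count (using that the antidominant simple still occurs once in each $M(w\cdot\lambda)$, $w\in W/W_\lambda$) gives $\dim_\Complex\End_\CatO(P(w_0\cdot\lambda))=|W/W_\lambda|$ as well. I would then either rerun the deformed argument in the singular deformed block, where the big deformed projective has endomorphism ring $\tilde R^{W_\lambda}\otimes_{\tilde R^W}\tilde R^{W_\lambda}$, which specialises to $\tilde R^{W_\lambda}/((\tilde R^W)_+\tilde R^{W_\lambda})\cong C^{W_\lambda}$, or transport the regular-case statement along translation onto the $W_\lambda$-wall while carefully tracking multiplicities. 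The hard step throughout is (b): showing that the deformed endomorphism ring equals $\tilde R\otimes_{\tilde R^W}\tilde R$ \emph{globally}, and not merely generically and along the walls --- this is exactly where the $\tilde R$-freeness of $\tilde P$ and the normality argument are indispensable, with the base-change lemma (a) the second genuinely non-formal ingredient. Once this is in place, the Endomorphismensatz is precisely what turns the Struktursatz (\cref{thm:struktursatz}) and the Morita equivalence~\cref{eqn:morita-eq} into a computational tool, since $\mathbf V_\lambda(P(w_0\cdot\lambda))=\End_\CatO(P(w_0\cdot\lambda))$.
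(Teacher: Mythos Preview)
The paper does not give its own proof of the Endomorphismensatz: it merely cites Soergel's original paper \autocite[thm.\ 3]{Soergel:perverse-Garben} and then uses the result to equip $A_\lambda$ with a grading. There is therefore nothing in the present paper to compare your proposal against.

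That said, your sketch is a faithful outline of Soergel's own argument in the cited reference: the passage to the deformed category over the localisation $\tilde R$ of $S(\mathfrak h)$, the identification of the deformed endomorphism ring with $\tilde R\otimes_{\tilde R^W}\tilde R$ via a generic-plus-codimension-one argument, and the specialisation to the coinvariant algebra are exactly the steps Soergel carries out. The dimension count you give at the outset is also the standard warm-up. If anything, you might flag more explicitly that the base-change step~(a) requires $\tilde P$ to be $\tilde R$-free and uses that $\Ext^1$ between deformed Vermas is controlled, but as a high-level roadmap your proposal is correct.
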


The images $\mathbf{V}P(w\cdot\lambda)$ of the indecomposable projectives
are particular direct summands of the $C$-module $C \otimes_{C^{s_{i_n}}} \cdots \otimes_{C^{s_{i_1}}} \Complex$
for a reduced expression $w = s_{i_1} \dotsm s_{i_n}$
\autocite[thm.\ 10]{Soergel:perverse-Garben}.

By putting $\mathfrak{h}^*$ in degree two, $C[\mathfrak h^*]$ becomes an evenly graded algebra.
Since $(\Complex[\mathfrak h^*]_+^W)$ is a homogeneous ideal,
also $\End_\CatO(P(w_0\cdot\lambda)) \isom C$ is a graded algebra.
Tensor products and direct summands of graded modules with indecomposable complement inherit a grading, 
hence the images $\mathbf{V} P(w\cdot\lambda)$
and the spaces $\Hom_{\CatO}(P(v\cdot\lambda), P(w\cdot\lambda)) \isom \Hom_C(\mathbf{V}P(v\cdot\lambda), \mathbf{V}P(w\cdot\lambda))$ are graded as well,
which establishes a grading on $A_\lambda \isom \End_C(\mathbf{V} P_\lambda)$
\autocite[thm\ 2.1]{stroppel:gradings-and-translation}.
The equivalence in \eqref{eqn:morita-eq} motivates the following:

\begin{definition}
	Let $\gCatO_\lambda ≔ \Mod[g]{}[A_\lambda]$ 
	be the category of graded $A_\lambda$-modules.
\end{definition}

\begin{notation}
	We denote the $k$-th graded component of a graded $A$-module $M$ by $M_k$.
	We employ the convention that the grading shift $\langle-\rangle$ shifts upwards;
	\ie, $M⟨i⟩_k ≔ M_{k-i}$.
\end{notation}

\begin{remark}
	\label{rmk:graded-lifts}
	A module $M\in \CatO_\lambda$ is \term{gradable} if there is a graded module $\tilde M\in\gCatO_\lambda$
	such that forgetting the grading yields $\mathrm f\tilde M=M$.
	In particular, simple, Verma and indecomposable projective modules
	are gradable \autocite[§§2f.]{stroppel:gradings-and-translation}.
	Up to grading shifts and isomorphisms, an indecomposable module has a unique graded lift.
	In the following, we shall not distinguish notationally between these modules and their graded lifts.
\end{remark}

We may fix the shift for the graded lifts of $M(w)$, $L(w)$ and $P(w)$
such that each of these lifts has its non-zero graded component
of least degree in degree zero.
For a graded module $M$
the multiplicity $[M_i : L(w)]$ of $L(w)$ in $M_i$, seen as ungraded modules,
therefore is precisely the multiplicity $[M : L(w)⟨i⟩]$ of graded modules.

Since $A_\lambda$ is non-negatively graded,
a graded module $M$ is filtered by the submodules $M_{\geq k} \coloneqq \bigoplus_{l \geq k} M_l$,
and its graded components $M_k = M_{\geq k}/M_{\geq k+1}$ are subquotients.
For Verma and projective modules, these are precisely the Loewy layers
\autocites[302]{stroppel:gradings-and-translation}[§1.1]{Irving:socle-filtration}.
The graded component a simple composition factor belongs to
is encoded in the exponents of Kazhdan-Lusztig polynomials:

\begin{theorem}[\thmtitle{generalised KL-theorem} {
	\autocites[thm.\ 2]{Irving:socle-filtration}%
		[cor.\ 7.1.3]{Irving:filtered-category-OS}%
		[thm.\ 3.1.4%
		\protect\footnote{
			Blocks by are indexed by dominant weights,
			$n_{xy}$ is used for Soergel's $h_{xy}$.
		}]{BGS:Koszul-Duality}%
}]
	\label{thm:generalised-kl}
	The composition factor multiplicities
	in the graded components of Verma modules satisfy
	\begin{equation}
		\label{eq:generalised-kl-theorem}
		p_{vw}=\textstyle \sum_\ell [M(v)_{\ell} : L(w) ] q^{(\ell(w) - \ell(v) - \ell)/2};
	\end{equation}
	in other words, each summand $q^k$ of $p_{vw}$ corresponds to a factor $L(w)$
	in the $\ell = (\ell(w) - \ell(v) - 2k)$-th layer of the Loewy filtration for $M(v)$, 
	with the zeroth layer at the top. 
\end{theorem}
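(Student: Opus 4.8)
\textbf{Proof proposal for \cref{thm:generalised-kl}.}
The plan is to recognise this as the graded refinement of the Kazhdan--Lusztig conjecture and to assemble it from ingredients already quoted: the ungraded statement behind \cref{thm:BGG-reciprocity}; the identification, recalled in \cref{sec:gradings}, of the grading filtration on a graded Verma module with its Loewy filtration; and Koszul duality for $\gCatO_0$. First I would reduce to a statement about graded multiplicities alone. Since $\gCatO_0$ is non-negatively graded and the graded lift of $M(v)$ is normalised with its head $L(v)$ in degree $0$, the Loewy layer ``$M(v)_\ell$'' is exactly the degree-$\ell$ component, so $[M(v)_\ell : L(w)]$ equals the graded decomposition number $[M(v) : L(w)\langle\ell\rangle]$. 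This is precisely the content of the ``in other words'' clause, via the substitution $\ell = \ell(w)-\ell(v)-2k$; it therefore remains to prove the polynomial identity, i.e.\ that $\sum_\ell [M(v)_\ell : L(w)]\,q^{(\ell(w)-\ell(v)-\ell)/2}$ is the Kazhdan--Lusztig polynomial attached to $(v,w)$ in the conventions of \cref{sec:kl-theory}.

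For that I would pass to the graded Grothendieck group $K_0(\gCatO_0)$, a free $\mathbf Z[q^{\pm1}]$-module (with $q$ acting as the shift $\langle 1 \rangle$) on each of the families $\{[M(w)]\}_w$ and $\{[L(w)]\}_w$. By Koszul duality for $\gCatO_0$ \autocite{BGS:Koszul-Duality} this identifies with an integral form of the Iwahori--Hecke algebra $\mathsf H_q(W)$ of \cref{sec:kl-theory} under which $[M(w)]$ is carried, up to a power of $q$, to the standard basis element and $[L(w)]$ to the Kazhdan--Lusztig basis element; the transition matrix between the two bases is by definition the matrix of Kazhdan--Lusztig polynomials, and Koszulity is exactly what forces the internal grading shift occurring in $[M(v)] = \sum_w(\cdots)[L(w)]$ to equal the homological degree in the (linear) projective resolution, hence the power of $q$ in $p_{vw}$. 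Parity vanishing of these polynomials --- again a consequence of Koszulity --- makes $\ell(w)-\ell(v)-\ell$ a non-negative even integer, so the exponent is meaningful; its overall normalisation is pinned down by the extreme case $p_{vv}=1$ (the head, layer $0$) together with the degree bound $\deg p_{vw} < \tfrac12(\ell(w)-\ell(v))$ for $v\neq w$ (copies of $L(w)\neq L(v)$ cannot reach the head), and specialising $q=1$ is consistent with \cref{thm:BGG-reciprocity}. As an alternative to the Grothendieck-group bookkeeping one can argue homologically, using the linear projective resolutions of Verma modules supplied by Koszulity: their length-$j$ terms put $\Ext^j_{\gCatO_0}(M(v),L(w))$ in internal degree $j$, so the graded Euler characteristic both reads off the graded composition series and recomputes $p_{vw}$ from the Koszul complex, and comparing the two yields \cref{eq:generalised-kl-theorem}.

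I expect the obstacle here to be notational rather than mathematical. The substantive inputs --- Koszulity of $\gCatO_0$, the Kazhdan--Lusztig conjecture, and grading $=$ Loewy filtration for Verma modules --- are all available in the cited literature \autocite{BGS:Koszul-Duality,Irving:socle-filtration,Irving:filtered-category-OS}, but each source runs on its own conventions, and reconciling them with the paper's ($\rho$-dominant versus $\rho$-antidominant blocks, $C_w$ versus $C'_w$, $T_w$ versus $H_w$, $q$ versus $q^{1/2}$, the direction of $\langle 1 \rangle$) is delicate: these choices visibly affect the precise exponent $(\ell(w)-\ell(v)-\ell)/2$ and even the placement of the indices $v,w$, as already illustrated in \cref{rmk:conventions-kl-polynomials}. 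Getting that bookkeeping unambiguously right --- so that the formula displayed is the one compatible with the block conventions fixed in \cref{sec:tools-for-O} --- is where essentially all the care goes; the mathematical content is entirely imported.
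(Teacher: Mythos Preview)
The paper does not supply its own proof of this theorem: it is stated with citations in the header (Irving, BGS) and used as an imported result, with only \cref{rmk:conventions-kl-polynomials} immediately afterwards to translate conventions. So there is no proof in the paper to compare against.

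Your proposal is a reasonable sketch of how the cited results assemble into the statement, and you correctly diagnose that the content is imported and the work is bookkeeping. One small slip: in your second paragraph you let $q$ act on $K_0(\gCatO_0)$ as the shift $\langle 1\rangle$, but in the paper's conventions it is $v = q^{-1/2}$ that acts by $\langle 1\rangle$ (see the remark following \cref{def:graded-translation}); this is exactly the sort of normalisation issue you warn about in your final paragraph, and it affects the exponent you are trying to pin down. Otherwise your outline---grading $=$ Loewy on Vermas, pass to the graded Grothendieck group, identify with the Hecke algebra so that the standard-to-KL transition matrix reads off the graded multiplicities, and use Koszulity for parity---is the standard route through \autocite{BGS:Koszul-Duality} and is consistent with what the paper cites.
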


\begin{remark}
	If one prefers to work with Soergel's $h_{vw}$
	(see \cref{rmk:conventions-kl-polynomials}), one gets
	$h_{vw} = \sum_\ell [M(v)_\ell : L(w)] v^\ell$.
	If one indexes blocks of $\CatO$ by antidominant weights instead,
	$v$ and $w$ have to be replaced by $w_0v$ and $w_0w$, respectively,
	and \cref{eq:generalised-kl-theorem} becomes
	$p_{w_0v,w_0w} = \sum_\ell [M(w_0v)_{\ell} : L(w_0w) ] q^{(\ell(w_0v) - \ell(w_0w) - \ell)/2}$.
	This is equivalent to \cref{eq:generalised-kl-theorem}
	since $w_0^2 = e$ and $\ell(w_0w) = \ell(w_0) - \ell(w)$ for all $w$.
\end{remark}
A graded analogue of the BGG reciprocity theorem \ref{thm:BGG-reciprocity} holds
\autocite[thm.\ 7.6]{stroppel:gradings-and-translation}.
\begin{definition}
	\label{def:graded-translation}
	On $\gCatO_\lambda$,
	the graded \term{translation through the $s$-wall} $\Theta_s$
	is uniquely defined by short exact sequences
	\begin{equation}
		\label{eq:definition-of-graded-translation}
		0\to M(w)⟨1⟩ \xto{\unit}\Theta_s M(w) \to M(ws)\to 0
		\quad\text{and}\quad
		\Theta_s M(ws) = \Theta_s M(w)⟨-1⟩
	\end{equation}
	for $w<ws$.
	Both adjunction maps are of degree one,
	and $\Theta_s^2 \isom \Theta_s⟨-1⟩ ⊕ \Theta_s⟨1⟩$.
	Analogously to the non-graded case (see \cref{def:shuffling-functor}),
	the graded (co)shuffling functors $\Sh{s}$ and $\Csh{s}$ are defined by
	$\Sh{s} M  = \coker(\unit\colon \id⟨1⟩ \Rightarrow \Theta_s)$ and 
	$\Csh{s} M = \coker(\Theta_s \Rightarrow \id⟨-1⟩)$.
\end{definition}

\begin{remark}
	The grading shift $\langle 1\rangle$ renders
	the Grothendieck group $K_0(\CatO^\mathbf Z_0)$ becomes a $\mathbf Z[v^{\pm 1}]$-module
	by $v[M]≔[M⟨1⟩]$.
	With $v = q^{-1/2}$,
	there is an isomorphism of $𝐙[v^{±1}]$-modules
	\begin{equation*}
			K_0\bigl(𝓞^\mathbf{Z}_0(\SL_n)\bigr) \to  \mathsf{H}_v(S_n),\quad
			[M(w)⟨v⟩]	\mapsto vH_w,\quad
			[P(w)⟨v⟩]	\mapsto vC_w,
	\end{equation*}
	for the Iwahori-Hecke algebra $\mathsf{H}_v(S_n)$ (see \cref{sec:kl-theory}).
	Explicitly, the basis $\{H_w\}_{w \in W}$ is subject to the relations
	$H_w H_s = H_{ws}$ and $H_{ws} H_s = H_w - (v - v^{-1})H_{ws}$ for $w < ws$.
	
	From the defining short exact sequence of $\Theta_s$
	in \cref{eq:definition-of-graded-translation} we see for $w < ws$ that
	\begin{align*}
		[\Theta_s M(w)] &= v[M(w)] + [M(ws)]\\
		[\Sh{s} M(w)] &= [\Theta_s M(w)] - [M(w)⟨v⟩] = M(ws)\\
		[\Sh{s} M(ws)] &= [\Theta_s M(w)⟨-1⟩] - [M(ws)⟨1⟩] = [M(ws)] - (v-v^{-1})[M(w)],
	\end{align*}
	hence $[\Theta_s]$ and $[\Sh{s}]$ respectively correspond
	to the right multiplication $\cdot(v + H_s)$ and $\cdot H_s$
	under the above isomorphism
	\autocite[thm.\ 7.1]{stroppel:gradings-and-translation}
\end{remark}

\subsection{Quivers}
\label{sec:quivers}
Recall that we defined the algebra $A_\lambda \coloneqq \End_{\CatO}(P_\lambda)
= \End_{\CatO}(\bigoplus_w P(w\cdot\lambda))$
in \cref{sec:gradings}.
This algebra $A_\lambda$ arises as the path algebra of a quiver as follows.

\begin{definition}
	The \term{Ext-quiver} $Q_A$ associated to a basic algebra $A$
	over an alg.\ closed field
	has vertices corresponding to the isoclasses of simple $A_\lambda$-modules $L$
	and $\dim \Ext^1(L, L') $ many arrows $L' \from L$.
\end{definition}

\begin{remark}
	Equivalently, for a complete set $\{e_i \mid i \in I\}$ of primitive orthogonal idempotents of $A$,
	$Q_A$ can be defined as the quiver
	with vertex set $I$ corresponding to the idempotents
	and $\dim (e_i (\rad A)e_j / e_i(\rad^2 A) e_j)$ many arrows $i \from j$
	\autocites[prop.\ 1.14]{ARS:1995}[prop.\ 2.4.3]{Benson:1995}[lem.\ 2.12]{ASS:2006}.
\end{remark}

By Gabriel's theorem 
\autocites[thm.\ 4.1.7]{Benson:1995}[thm.\ 3.7]{ASS:2006}[§4.3]{Gabriel:Auslander-Reiten-Sequences},
there exists an admissible ideal $\mathfrak{a}$
such that $k Q_A / \mathfrak{a} \cong A$.
Under this isomorphism,
the idempotent $e_i$ of $A$ corresponds to the trivial path $\trivpath{i}$ in $kQ_A$.
More generally, every finite dimensional algebra over an alg.\ closed field
is Morita-equivalent to a basic algebra \autocite[cor.\ 6.10]{ASS:2006}
and thus to a path algebra quotient.

For the basic $\Complex$-algebra $A_\lambda$
with complete set $\{\id_{P(w)}\}$ of primitive orthogonal idempotents,
we denote the Ext-quiver quiver by $Q_\lambda \coloneqq Q_{A_\lambda}$
and the admissible ideal by $\mathfrak{a}_\lambda$,
such that $A_\lambda \cong Q_\lambda / \mathfrak{a}_\lambda$.

\begin{notation}
	We denote the composition of arrows $v_2\xleftarrow{a}v_1$ and $v_3\xleftarrow{b}v_2$ of a quiver by
	$v_3\xleftarrow{ba}v_1$.
	We denote trivial the path associated to a vertex $v$ by $\trivpath{v}$.
\end{notation}

The path algebra of any quiver $Q_\lambda$ is non-negatively graded by the length of a path in terms of arrows.
One can show that $\mathfrak a_\lambda$ is a homogeneous ideal;
hence $A_\lambda$ is graded as well.
This grading coincides with the grading induced by $\mathbf V_\lambda$ 
\autocite{stroppel:gradings-and-translation} that we explained in \cref{sec:gradings}.

To summarise, we have that $\CatO_\lambda\simeq\Mod{}[A_\lambda]$ and $\CatO_\lambda^\mathbf Z\simeq\Mod[g]{}[A_\lambda]$.
The equivalence $\CatO_\lambda\simeq\Mod{}[A_\lambda]$ maps indecomposable projectives $P(w)$ in $\CatO_\lambda$
to the indecomposable projectives in $\Mod{}[A_\lambda]$;
these are precisely the right ideals $\trivpath{w} A_\lambda$ of all paths ending in $w$
\autocite[cor.\ 4.18, rmk.\ 4.20]{Barot:Rep-Theory}.

One can choose a $\Complex$-basis of the projective module $\trivpath{w} A_\lambda$
consisting of paths ending in $w$,
each of which corresponds to a simple composition factor $L(v)⟨i⟩$
for its source vertex $v$ and length $i$.
Hence as a vector space, $\trivpath{w}A$ is a direct sum
$\trivpath{w}A = \bigoplus_{v \in W} \trivpath{w}A\trivpath{v}$ of graded vector spaces,
whose $i$-th graded component has dimension
$\dim (\trivpath{w}A\trivpath{v})_i = [P(w)_i : L(v)⟨i⟩]$.
	
From now on, we shall stick to the principal block $\CatO_0$
and omit all the subscript-$\lambda$'s from $A$ and $Q$.

\subsection{Parabolic subalgebras}
\label{sec:parabolic-subalgebras}
Let $(W,S)$ be a Coxeter system
and $S_\p\subseteq S$ be any subset of the simple reflections of $W$.

\begin{definition}
	The associated \term{parabolic subgroup} $W_\p\leq W$
	is the subgroup $W_\p=⟨S_\p⟩$ of $W$.
	Every left coset in $W_\p\backslash W$ has a unique representative of minimal length \autocite[§1.10]{Humphreys:Coxeter-Groups}.
	We denote the set of such representatives by $W^\p$.
\end{definition}

To the quiver $Q$ we associate the full subquiver $Q^\p$ of $Q$
with vertex set $Q^\p=W^\p$
and define the respective algebra
$A^\p≔A/(\trivpath{v})_{v\in W_\p}$.
The category $\CatO^\p_0 ≔ \Mod{}[A^\p]$
is equivalent to the smallest Serre subcategory of $\CatO^\p_0$
containing all simple modules $L(w)$ for $w\in W^\p$
\autocite[§2]{kildetoft-mazorchuk:parabolic-projective-functors}.
The quotient map $A\onto A^\p$ induces an induction-restriction-adjunction
\begin{equation}
	\Ind^{\p}\colon \CatO_0 \xtofrom{\;\dashv\;}  \CatO^\p_0 \noloc \Res_{\p},\quad
	M \mapsto M \otimes_A A^\p, \quad
	N_A \mapsfrom N,
\end{equation}
where $N_A$ denotes the module $N$ on which $A$ acts via $A\onto A^\p$.
The functor $\Res_\p$ is fully faithful
and thus renders $\CatO^\p_0$ a subcategory of $\CatO_0$.
It has a left adjoint $Z^\p≔\Ind^\p$, called \term{Zuckerman functor},
and a right adjoint $Z_\p$, called \term{dual Zuckerman functor},
which respectively assign to a module $M$ its largest quotient and its largest submodule
that have simple composition factors $L(w)$ for $w \in W^\p$ 
\autocite[Thm.\ 6.1]{mazorchuk:categorification}.
Let $P^\p(w)≔Z^\p(P(w))=\trivpath{w}A^\p$ and $M^\p(v) ≔ Z^\p(M(v))$.
As notation suggests, the $P^\p(w)$'s are the indecomposable projectives in $\CatO^\p_0$ \autocite[§4.6]{mazorchuk:categorification}.

\begin{remark}
	The constructions and statements from \crefrange{sec:composition-series}{sec:quivers}
	have parabolic analogues; namely:
	\begin{thmlist}
		\item The category $\CatO^\p_0$ has a projective generator $P^\p=\bigoplus_{w\in W^\p}P^\p(w)$.
		\item The analogously constructed graded version $\CatO_0^{\mathbf Z,\p}$ of $\CatO_0^\p$
			contains graded lifts of simples, parabolic Vermas and indecomposable projectives.
		\item The $P^\p(w)$'s have parabolic standard filtrations with subquotients isomorphic to $M^\p(v)$'s.
		\item The respective multiplicities satisfy a parabolic BGG reciprocity theorem
			\autocite[Prop.\ 4.5, Thm.\ 6.1]{RC:splitting.criteria}.
		\item Parabolic Kazhdan-Lusztig polynomials $p^\p_{vw}\in\mathbf Z[q^{\pm 1}]$ 
			occur as base change coefficients for parabolic Hecke modules
			\autocites[§3]{Deodhar:parabolic-KL}[rmk.\ 3.2]{Soergel:KL-polynomials}.
		\item The generalised Kazhdan-Lusztig theorem \ref{thm:generalised-kl}
			has the following parabolic analogue
			for graded lifts of simple, parabolic Verma and indecomposable projective modules
			\autocites[Cor.\ 7.1.3]{Irving:filtered-category-OS}[Thm.\ 1.3]{CC:parabolic-KL}:
			\begin{equation}
				p^\p_{vw}
				= \sum_{k\geq 0}(P^\p(w)_\ell : M^\p(v)) q^{(\ell(w) - \ell(v) - \ell)/2} = \sum_{k\geq 0} [M^\p(v)_\ell : L(w)] q^{(\ell(w) - \ell(v) - \ell)/2}.
			\end{equation}
			In other words, each summand $q^k$ corresponds,
			for $\ell - (\ell(w) - \ell(v) - 2k)$,
			to an $M^\p(v)\langle\ell\rangle$ in $P^\p(w)$
			and an $L(w)\langle\ell\rangle$ in $M^\p(v)$,
			with the top of the modules in degree zero.
	\end{thmlist}
\end{remark}

\begin{remark}
	\label{rmk:parabolic-translation-and-shuffling}
	The functor $Z^\p$ commutes with projective functors,
	in particular with $\Theta_s$
	\autocite[Thm.\ 6.1]{mazorchuk:categorification}.
	This implies that both $\Theta_s$ and $\Sh{s}$ restrict to $\CatO^\p_0$
	and $\CatO^{\mathbf{Z},\p}_0$,
	and the restriction of $\Theta_s$ is uniquely characterised 
	by the short exact restrictions
	\begin{equation}
		\label{eq:ses-parabolic-wall-crossing}
		0 \to M^\p(w)⟨1⟩ \xto{\unit} \Theta_s \to M^\p(ws)\to 0
		\quad\text{and}\quad
		\Theta_s M^\p(ws) = \Theta_s M^\p(w)⟨-1⟩,
	\end{equation}
	for $w<ws$,
	of the short exact sequences \cref{eqn:translation-defining-ses} defining $\Theta_s$.
\end{remark}

\begin{caveat}
	\label{caveat:parabolic-projectives-not-projective}
	The inclusion $\CatO^\p_0\subseteq \CatO_0$ does not preserve projectives,
	as we shall see in \cref{rmk:inclusion-of-parabolic-subalgebradoesnt-preserve-projectives}.
\end{caveat}

\section{\texorpdfstring{$B_n$-actions for $\SL_2$}{𝐵n-actions for 𝔰𝔩₂}}
\label{sec:sl2-case}
Consider the Lie algebra $\mathfrak{g}=\SL_2$ and its Weyl group $S_2=\{e, s\}$.
Since the Kazhdan-Lusztig polynomials for $S_2$ are
$p_{vw} = \begin{smallcases}
	1 & \text{if $v \leq w$,}\\
	0 & \text{otherwise,}
\end{smallcases}$
the Verma modules and the indecomposable projectives
have composition series and standard filtrations
\begin{equation}
	P(e) 
		= M(e) 
		= \begin{psmallmatrix}L(e)\\L(s)\end{psmallmatrix},
	\qquad
	P(s) 
		= \begin{psmallmatrix}M(s)\\M(e)\end{psmallmatrix} 
		= \begin{psmallmatrix} L(s)\\L(e)\\L(s)\end{psmallmatrix},
	\qquad M(s) = L(s).
\end{equation}
The category $\CatO_0$ thus is equivalent to $\Mod{}[A]$
for the path algebra quotient $A=\Complex Q/(ba)$ 
of the quiver $Q = (a\colon e \rightleftarrows s \noloc b)$.%
The arrows $a,b$ of $Q$ and their relations
correspond to the unique (up to scalars) non-trivial morphisms $a\colon P(e)\into P(s)$ and $b\colon P(s)\to P(e)$
\autocite{stroppel:gradings-and-translation}.

\subsection{The action of \texorpdfstring{$\Sh{s}$}{Shₛ}}
\label{sec:shuffling-sl2}
We obtain the images of the indecomposable projectives and Verma modules under $\Sh{s}$
as follows:
\begin{enumerate}
	\item $M(e)=P(e)$: from the short exact sequence \cref{eqn:translation-defining-ses} we obtain that
		$\Theta_s M(e) = \begin{psmallmatrix}M(s)\\M(e)\end{psmallmatrix}=P(s)$
		and $\Sh{s} M(e)= M(s)$.
	\item $M(s)$: Up to scalars, there is a unique morphism $M(s) \into M(e)$;
		hence we obtain that
		$\Sh{s} M(s)
		\isom \begin{psmallmatrix}M(s)\\M(e)/M(s)\end{psmallmatrix} 
		=     \begin{psmallmatrix}L(s)\\L(e)\end{psmallmatrix}=M^\vee(s)$,
		\ie, the \term{dual Verma module};
		see \autocite[\S 3.2]{Humphreys:CatO} for the definition.
	\item $P(s)$: using $\Theta_s^2\isom\Theta_s\oplus\Theta_s$, 
		it follows from $P(s)=\Theta_s M(e)$
		that $\Sh{s} P(s) \isom P(s)$.
\end{enumerate}
From the naturality diagram
\begin{equation}
		\label{eqn:adj-of-Ps}
	\begin{tikzcd}[ampersand replacement=\&]
		P(e) \rar["a"]\dar["\eta_{P(e)}"']\& P(s) \rar["b"]\dar["\Mtrx{1\\x}", "\eta_{P(s)}"'] \& P(e) \dar["\eta_{P(s)}"]\\
		P(s) \rar["\Mtrx{1\\0}"] \& P(s)⊕P(s) \rar["\Mtrx{0 & 1}"]\rar \& P(s)
	\end{tikzcd}
\end{equation}
of $\eta\colon\id\Rightarrow\Theta_s$ we get that
the morphisms $a\colon P(e)\into P(s)$ and $b\colon P(s)\to P(e)$ have images 
\begin{equation}
	\label{eq:shuffling-sl2:images-of-generating-morphisms}
	\Sh{s} a\colon M(s)\into P(s)\quad\text{and}\quad\Sh{s} b\colon P(s)\onto M(s).
\end{equation}
Similar arguments show that $\Csh{s} P(s) \isom P(s)$
and $\RCsh{s} P(e)$ is the mapping cone $\RCsh{s} P(e)\qis\{\degZero{P(s)} \to P(e)\}$
(recall the notation of mapping cones from \cref{notation:total-complexes});
so we have
\begin{equation}
	\label{eq:shuffling-sl2:imaegs-of-projectives}
	\begin{aligned}
		\LSh{s} P(e) &= \{P(e)\to \degZero{P(s)}\} \simeq M(s)  &
		\RCsh{s} P(e) &= \{\degZero{P(s)}\to P(e)\} \\
		\LSh{s} P(s) &= P(s) &
		\RCsh{s} P(s) &= P(s),
	\end{aligned}
\end{equation}
and in terms of the projective resolution $\{P(e) \to P(s)\}$ of $M(s)$,
the morphisms from \cref{eq:shuffling-sl2:images-of-generating-morphisms} become
\begin{align}
	\label{eq:shuffling-sl2:images-of-generating-morphisms:lifts}
	\LSh{s} a&\colon \left\{
		\begin{tikzcd}[cramped, sep=small, baseline=(ab.base), ampersand replacement=\&]
			\{P(e) \rar \& P(s) \dar["ab" name=ab]\} \\
			\& P(s),
		\end{tikzcd}
	\right. &
	\LSh{s} b&\colon \left\{
		\begin{tikzcd}[sep=small, cramped, baseline=(id.base), ampersand replacement=\&]
			\& P(s) \dar["\id" name=id]\\
			\{P(e) \rar \& P(s)\},
		\end{tikzcd}
	\right.\\[1ex]
	\RCsh{s} a&\colon \left\{
		\begin{tikzcd}[cramped, sep=small, baseline=(ab.base), ampersand replacement=\&]
			\{P(s) \dar["\id" name=id] \rar \& P(s) \} \\
			P(s),
		\end{tikzcd}
	\right.
	&
	\RCsh{s} b&\colon \left\{
		\begin{tikzcd}[sep=small, cramped, baseline=(id.base), ampersand replacement=\&]
			P(s) \dar["ab" name=ab]\\
			\{P(s) \rar \& P(s) \}.
		\end{tikzcd}
	\right.
\end{align}
Since $\CatO_0$ has finite global dimension \cite[prop.\ 6.9]{Humphreys:CatO},
$\Db(\CatO_0)$ is equivalent to the bounded homotopy category of projectives
(cf.\ \autocite[thm.\ 10.4.8]{Weibel:Hom-Alg})
and therefore generated as a triangulated category
by the indecomposable projectives;
\ie, the smallest triangulated subcategory of $\Db(\CatO_0)$
containing the indecomposable projectives
is $\Db(\CatO_0)$ itself \autocite[\S 3.3]{Krause:Derived-Categories}.
These above data thus suffices to determine $\LSh{s}$.
Phrasing the observations differently, we have shown the following:
\begin{proposition}
	The action on $\Db(\CatO_{0,\SL_2})$ of the Artin braid group $B_1$
	(which is an infinite cyclic group)
	by $s \mapsto \LSh{s}$ and $s^{-1} \mapsto \RCsh{s}$
	is given explicitly by
	\begin{align*}
		B_1 &\longto \Aut(\Db(\CatO_0))\\
		s^n &\longmapsto
			\underset{\strut}{
			\begin{aligned}[t] 
				& P(s)\mapsto P(s),\\
				& P(e)\mapsto 
					\begin{cases}
						\bigl\{P(e)\into \smash[t]{\overbrace{P(s)\xto[ab]{} P(s)\to \dotsb\to \degZero{P(s)}}^{\text{$n$ times}}}\bigr\} & \text{if $n\geq 0$},\\
						\bigl\{\smash{\underbrace{\degZero{P(s)}\xto{ab} P(s)\to \dotsb\to P(s)}_{\text{$-n$ times}}} \to P(e) \bigr\} & \text{if $n\leq 0$,}
					\end{cases}
			\end{aligned}
			}
	\end{align*}
	with homological degree $0$ as indicated.
\end{proposition}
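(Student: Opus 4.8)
The plan is to deduce the whole statement from the action of $\LSh{s}$ and $\RCsh{s}$ on the two indecomposable projectives $P(e),P(s)$ and on the morphisms $a,b$, all of which are recorded in \cref{eq:shuffling-sl2:images-of-generating-morphisms} and \cref{eq:shuffling-sl2:imaegs-of-projectives}. Since the braid relations of $B_1=B_{S_2}$ are vacuous, $B_1$ is infinite cyclic and free on $s$, so $s\mapsto\LSh{s}$, $s^{-1}\mapsto\RCsh{s}$ automatically extends to a homomorphism $B_1\to\Aut(\Db(\CatO_0))$ (this is also the special case $n=2$ of \cref{thm:roquier-braid-group-action}); hence it only remains to identify the images of $P(e)$ and $P(s)$ under the powers $\LSh{s}^{\,n}$, $n\in\mathbf Z$. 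For $P(s)$ this is immediate: $\LSh{s}P(s)\isom P(s)\isom\RCsh{s}P(s)$, so $\LSh{s}^{\,n}P(s)\isom P(s)$ for all $n$. Writing $C_n$ for the complex asserted in the statement (the degenerate case $n=0$ being $P(e)$ in degree zero), and noting that $C_0,C_{\pm1}$ are exactly the images in \cref{eq:shuffling-sl2:imaegs-of-projectives}, it suffices to prove $\LSh{s}C_n\isom C_{n+1}$ for every $n\in\mathbf Z$: then $\LSh{s}^{\,n}P(e)\isom C_n$ for $n\ge0$ by iterating upward from $C_0$, and $\RCsh{s}^{\,m}P(e)=\LSh{s}^{\,-m}P(e)\isom C_{-m}$ for $m\ge0$ by applying $\RCsh{s}=\LSh{s}^{-1}$ to the same isomorphisms.

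For the inductive step I would use that each $C_n$ is a bounded complex of projective, hence $\Sh{s}$-acyclic, modules; therefore $\LSh{s}C_n\isom\cone(\unit_s\colon C_n\Rightarrow\Theta_s C_n)$ is quasi-isomorphic to the complex, denoted $\Sh{s}C_n$, obtained by applying $\Sh{s}$ to every term and every differential of $C_n$ (the unit being termwise injective on projectives). Substituting $\Sh{s}P(s)=P(s)$, $\Sh{s}P(e)=M(s)$, $\Sh{s}a\colon M(s)\into P(s)$, $\Sh{s}b\colon P(s)\onto M(s)$, and $\Sh{s}(ab)=\Sh{s}(a)\circ\Sh{s}(b)=ab$ — the unique nonzero endomorphism of $P(s)$, which factors as $P(s)\onto M(s)\isom\soc P(s)\into P(s)$ — one reads off that for $n\ge0$
\begin{equation*}
	\LSh{s}C_n\isom\bigl\{\,M(s)\xrightarrow{\Sh{s}a}P(s)\xrightarrow{ab}P(s)\to\dotsb\to\degZero{P(s)}\,\bigr\}
\end{equation*}
with $n$ copies of $P(s)$, and that for $n\le0$ it is $\{\,\degZero{P(s)}\xrightarrow{ab}P(s)\to\dotsb\to P(s)\xrightarrow{\Sh{s}b}M(s)\,\}$ with $-n$ copies of $P(s)$.

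It then remains to simplify these two complexes. For $n\ge0$, replace the outermost term $M(s)$ by its projective presentation $0\to P(e)\xrightarrow{a}P(s)\xrightarrow{\varepsilon}M(s)\to0$; the spliced complex has one more copy of $P(s)$, and the differential from the new $P(s)$ to the first old one becomes $\Sh{s}a\circ\varepsilon=(\soc P(s)\into P(s))\circ(P(s)\onto M(s))=ab$, i.e.\ exactly $C_{n+1}$. For $n\le0$, the last two terms form $\{P(s)\xrightarrow{\Sh{s}b}M(s)\}$ with $\Sh{s}b$ epi onto $M(s)\isom L(s)$; since $\ker(\Sh{s}b)=\rad P(s)\isom P(e)$ via $a$, this tail is quasi-isomorphic to $P(e)$ placed one step earlier, and the incoming differential $ab=a\circ b$ co-restricts along $a\colon P(e)\xrightarrow{\ \sim\ }\rad P(s)$ to $b\colon P(s)\to P(e)$; that is $C_{n+1}$ again (now with $-n-1$ copies of $P(s)$), the case $n=-1$ collapsing to $C_0=P(e)$. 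Iterating as in the first paragraph proves the proposition.

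I expect no deep obstacle; the argument is essentially bookkeeping. The points that genuinely need care are: (i) the identities $\Sh{s}(ab)=ab$ and $\ker(\Sh{s}b)=\rad P(s)\isom P(e)$, which are precisely what make the two simplification moves regenerate the same differential $ab$, so that the recursion closes; (ii) that the signs picked up by the differentials of the iterated mapping cones are immaterial, since every $\Hom$-space occurring is at most one-dimensional; and (iii) keeping track of homological degrees and of the degenerate conventions at $n=0$.
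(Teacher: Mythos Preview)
Your proof is correct and follows the same strategy the paper uses: the paper does not give a formal proof but merely says the statement is ``clear from what we derived before'' and illustrates with the single case $n=2$ (applying $\LSh{s}$ to the projective resolution of $M(s)$ and passing to the total complex), which is exactly your inductive step for $n\ge 0$. Your argument is more complete than the paper's sketch in that you spell out the induction for all $n$, including the $n\le 0$ case via the observation $\ker(\Sh{s}b)\cong P(e)$, which the paper leaves implicit.
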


\begin{example}
	To illustrate why this proposition is clear from what we derived before,
	let us compute the action of, say, $s^2$, which acts by $\LSh{s}^2$.
	In $\Db(\CatO_0)$, the module $M(s)$ is isomorphic
	to its projective resolution $\{P(e) \xto{a} P(s)\}$,
	to which we apply $\LSh{s}^2$.
	From the images in \cref{eq:shuffling-sl2:imaegs-of-projectives,eq:shuffling-sl2:images-of-generating-morphisms}
	we get that $\LSh{s} M(s)$ is isomorphic to the double complex
	\begin{equation*}
		\LSh{s} M(s) \simeq
		\left\{
			\begin{tikzcd}[sep=small, slightly cramped, baseline=(b.base)]
				P(e) \dar["b" name=b] \\
				P(s) \rar{\id} & P(s)
			\end{tikzcd}
		\right\}
		\simeq 
		\{P(e) \to P(s) \to \degZero{P(s)}\}.
	\end{equation*}
	Applying $\LSh{s}$ once again shows that
	$\LSh{s} M(s) \simeq \{P(e) \to P(s) \to P(s) \to \degZero{P(s)}\}$.
\end{example}

\subsection{Spherical objects}
\label{sec:spherical-objects-sl2}
To check that a spherelike object is spherical indeed,
\ie, that the composition pairing from the Calabi-Yau-property
(\cref{def:spherical-object:calabi-yau}) is non-degenerate,
it suffices to check non-degeneracy \wrt\ indecomposable projectives:

\begin{lemma}
	Let $\mathcal{C}$ be an abelian category of finite global dimension
	and $E \in \Db(\mathcal{C})$ a $d$-spherelike object.
	If the composition pairing
	$\Hom^i_{\Db}(P, E) ⊗ \Hom^{d-i}_{\Db}(P, E) \to kx$
	is non-degenerate for all $i$
	and all indecomposable projective objects $P \in \mathcal{C}$,
	seen as complexes in $\Db(\mathcal{C})$ concentrated in a single degree,
	then
	$\Hom^i_{\Db}(F, E) ⊗ \Hom^{d-i}_{\Db}(F, E) \to kx$
	is non-degeneate for all $i$ and $F \in \Db(\mathcal{C})$.
\end{lemma}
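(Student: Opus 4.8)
The plan is to re-encode the assertion \enquote{the Calabi--Yau pairing at $F$ is non-degenerate in every degree} as the assertion that a single natural transformation $\phi_{F}$ is an isomorphism, and then to propagate this from the indecomposable projectives to all of $\Db(\mathcal C)$ by dévissage. The point is that, since $\mathcal C$ has finite global dimension, the indecomposable projectives generate $\Db(\mathcal C)$ as a triangulated category, exactly as recalled in \cref{sec:shuffling-sl2}.

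Write $D=\Hom_{k}(-,k)$ for the $k$-linear dual. By \cref{def:spherical-object:finite-total-dimension}, for $E$ spherelike and any $F\in\Db(\mathcal C)$ the graded spaces $\Hom^{*}_{\Db}(F,E)$ and $\Hom^{*}_{\Db}(E,F)$ are finite-dimensional in each degree and vanish in all but finitely many degrees. For $i\in\mathbf Z$ I would let $\phi^{i}_{F}\colon\Hom^{i}_{\Db}(F,E)\to D\,\Hom^{d-i}_{\Db}(E,F)$ be the linear map induced by the composition pairing of \cref{def:spherical-object:calabi-yau} (composed with $\Hom^{d}_{\Db}(E,E)\to kx$, respectively $\Hom^{0}_{\Db}(E,E)/k\,\id_{E}\cong k$ when $d=0$). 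Because all spaces in sight are finite-dimensional, the pairing in degree $i$ at $F$ is non-degenerate if and only if $\phi^{i}_{F}$ is an isomorphism; so the claim becomes: if $\phi^{i}_{P}$ is an isomorphism for all $i$ and all indecomposable projective $P$, then $\phi^{i}_{F}$ is an isomorphism for all $i$ and all $F$.

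The two structural facts I would establish about $\phi=(\phi^{i})_{i\in\mathbf Z}$ are \emph{naturality} and \emph{compatibility with triangles}. Naturality is immediate: a morphism $u\colon F\to G$ induces $(-)\circ u$ on $\Hom^{i}_{\Db}(-,E)$ and $u\circ(-)$ on $\Hom^{d-i}_{\Db}(E,-)$, and the resulting square commutes because $(f\circ u)\circ g=f\circ(u\circ g)$ in $\Hom^{*}_{\Db}(E,E)$; thus $\phi$ is a natural transformation of graded contravariant functors into graded vector spaces (after re-grading the target so that each $\phi^{i}$ is degree-preserving). Both functors $\Hom^{*}_{\Db}(-,E)$ and $D\,\Hom^{d-*}_{\Db}(E,-)$ carry distinguished triangles to long exact sequences, and for a triangle $F'\xrightarrow{u}F\xrightarrow{v}F''\xrightarrow{w}F'[1]$ the naturality squares for $u$ and $v$, together with the square relating the two connecting maps — that of $\Hom^{*}_{\Db}(-,E)$ being a shift of $(-)\circ w$, that of $D\,\Hom^{d-*}_{\Db}(E,-)$ being $D$ of $w\circ(-)$ — assemble into a morphism of long exact sequences. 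Commutativity of this last square is once more just associativity, $(\xi\circ w)\circ\zeta=\xi\circ(w\circ\zeta)$, once the shift and sign conventions of the mapping cone from \cref{def:twist-cotwist} are matched up against those of the two long exact sequences; this bookkeeping is the one step that needs genuine care, and I expect it to be the only real obstacle.

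Granting these, let $\mathcal D\subseteq\Db(\mathcal C)$ be the full subcategory of objects $F$ with $\phi^{i}_{F}$ an isomorphism for every $i$. Then $\mathcal D$ is closed under isomorphism; under the shift $[1]$ and its inverse (which only reindex the degree, identifying $\phi^{i}_{F[1]}$ with $\phi^{i+1}_{F}$); under finite direct sums (the Gram matrix of the pairing for $F_{1}\oplus F_{2}$ is block-diagonal, so non-degeneracy for $F_{1}$ and $F_{2}$ gives it for $F_{1}\oplus F_{2}$); and, by the morphism of long exact sequences above together with the five lemma, under passing to the third term of a distinguished triangle whose other two terms lie in $\mathcal D$. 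Moreover $\mathcal D$ contains every indecomposable projective by hypothesis (using \cref{def:spherical-object:finite-total-dimension} to turn non-degeneracy into \enquote{$\phi^{i}_{P}$ iso}). Finally, since $\mathcal C$ has finite global dimension, every $F\in\Db(\mathcal C)$ is isomorphic to a bounded complex $P^{\bullet}$ of projective objects \autocite[thm.\ 10.4.8]{Weibel:Hom-Alg}, which — in the situations of interest, where projectives are finite direct sums of indecomposable projectives — may be chosen with each $P^{j}$ such a finite sum. Using the brutal-truncation triangles $\sigma^{\ge n}P^{\bullet}\to P^{\bullet}\to\sigma^{<n}P^{\bullet}\to(\sigma^{\ge n}P^{\bullet})[1]$ and inducting on the number of nonzero terms of $P^{\bullet}$, with base case a single shifted term $P^{j}[-j]$ (a finite direct sum of shifts of indecomposable projectives, hence in $\mathcal D$), one obtains $P^{\bullet}\in\mathcal D$. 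Therefore $\mathcal D=\Db(\mathcal C)$, which is the claim.
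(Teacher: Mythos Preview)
Your argument is correct and follows essentially the same route as the paper's own proof: reformulate non-degeneracy as a natural isomorphism between two (co)homological functors on $\Db(\mathcal C)$, then propagate from projectives to all objects by the five lemma and the fact that projectives generate $\Db(\mathcal C)$ as a triangulated category. Your version is in fact a little more careful---you explicitly treat closure under shifts and finite direct sums (needed to pass from indecomposable to arbitrary projectives) and flag the sign bookkeeping---whereas the paper leaves these routine points implicit.
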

\begin{proof}
	Since $\mathcal{C}$ is of finite global dimension,
	every object of $\Db(\mathcal C)$ is quasi-isomorphic to a bounded complex of projectives;
	in other words, for the full subcateogires
	$S_0 \coloneqq \Proj{\mathcal C}$
	and $S_s \coloneqq \{\cone f \mid f \in S_{s-1}\}$ of $\Db(\mathcal C)$,
	we have $\Db(\mathcal C) = \bigcup_{s \geq 0} S_s$.
	Assume for some $s \geq 0$
	that the pairing $\Hom^i_{\Db}(F, E) ⊗ \Hom^{d-i}_{\Db}(F, E) \to kx$ is non-degenerate
	whenever $F \in S_s$.
	For a distinguished triangle $F' \xto{f} F \to F'' \to F'\hShift{-1}$
	with $F, F' \in S_s$,
	we thus get that in the diagram
	\begin{equation*}
		\begin{tikzcd}[sep=small, cramped]
			\cdots \rar & \Hom_{\Db}(E, F') \dar{\cong} \rar & \Hom_{\Db}(E, F) \dar \rar & \Hom_{\Db}(E, F'') \dar{\cong} \rar & \cdots\\
			\cdots \rar & \Hom_{\Db}(F', E\hShift{-d})^* \rar & \Hom_{\Db}(F, E\hShift{-d})^* \rar & \Hom_{\Db}(F'', E\hShift{-d})^* \rar & \cdots
		\end{tikzcd}
	\end{equation*}
	whose rows are long exact sequences,
	the indicated vertical maps are isomorphisms.
	By the five lemma, it follows that also
	$\Hom^i_{\Db}(F'', E) ⊗ \Hom^{d-i}_{\Db}(F'', E) \to kx$ is non-degenerate.
	The claim follows by induction.
\end{proof}
  
\begin{lemma}
	$P(s)$ is a $0$-spherical and $L(e)$ is a $2$-spherical object
	of $\Db(\CatO_0)$ for $\SL_2$.
\end{lemma}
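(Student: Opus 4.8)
The plan is to verify the three defining conditions (S1), (S2) and (S3) for each of $P(s)$ and $L(e)$, working throughout in the presentation $\CatO_0\simeq\Mod{}[A]$ with $A=\Complex Q/(ba)$ and $Q=(a\colon e\rightleftarrows s\noloc b)$, so that $ba=0$ while $ab\neq0$. Condition (S1) is automatic and in fact holds for \emph{every} object of $\Db(\CatO_0)$, since $A$ is finite dimensional of finite global dimension: $\Ext^j_\CatO(M,N)$ is then finite dimensional for all finite length $M,N$ and vanishes outside a bounded range of $j$, while every object of $\Db(\CatO_0)$ has only finitely many non-zero cohomologies, each of finite length.

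For (S2) I would first treat $E=P(s)$. Being projective, $P(s)$ has $\Hom^j_{\Db}(P(s),P(s))=\Ext^j_\CatO(P(s),P(s))$ equal to $\End_\CatO(P(s))=\trivpath{s}A\trivpath{s}$ for $j=0$ and zero otherwise; this algebra has $\Complex$-basis $\{\id,ab\}$ with $(ab)^2=a(ba)b=0$, so $\Hom^*_{\Db}(P(s),P(s))\cong\Complex[x]/(x^2)$ concentrated in degree $0$. For $E=L(e)$ one has the projective resolution $0\to P(e)\xrightarrow{a}P(s)\xrightarrow{b}P(e)\to L(e)\to0$ (its syzygies being $\rad P(e)\cong L(s)$ and $\rad P(s)\cong P(e)$); applying $\Hom_\CatO(-,L(e))$ and using $\dim\Hom_\CatO(P(v),L(w))=\delta_{vw}$ gives the complex $\Complex\to0\to\Complex$, whence $\Ext^j_\CatO(L(e),L(e))\cong\Complex$ for $j=0,2$ and vanishes otherwise. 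Since that resolution has length two, $x^2$ lands in $\Ext^4_\CatO(L(e),L(e))=0$, so $\Hom^*_{\Db}(L(e),L(e))\cong\Complex[x]/(x^2)$ with $x$ in degree $2$. This establishes (S2) with $d=0$, resp.\ $d=2$.

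For (S3) I would invoke the preceding Lemma, which reduces the task to checking non-degeneracy of the composition pairing of (S3) with $F$ ranging over the indecomposable projectives $P(e)$ and $P(s)$. Since $F$ is projective and $E$ (equal to $P(s)$ or $L(e)$) is a module, $\Hom^*_{\Db}(F,E)=\Hom_\CatO(F,E)$ sits in degree zero, so only the $i=0$ summand can contribute and the second factor there is $\Ext^d_\CatO(E,F)$. For $E=P(s)$ (so $d=0$ and the pairing lands in $\End_\CatO(P(s))/\Complex\id\cong\Complex x$): against $P(e)$ it is $\Hom_\CatO(P(e),P(s))\otimes\Hom_\CatO(P(s),P(e))\to\Complex x$, $(a,b)\mapsto ab=x$, a perfect pairing of one-dimensional spaces; against $P(s)$ it is multiplication on $\End_\CatO(P(s))=\Complex\id\oplus\Complex x$ followed by the projection to $\Complex x$, with Gram matrix $\Mtrx{0&1\\1&0}$, again perfect. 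For $E=L(e)$ (so $d=2$): against $P(s)$ both factors vanish, as $\Hom_\CatO(P(s),L(e))=0$ (because $[L(e):L(s)]=0$) and $\Ext^*_\CatO(L(e),P(s))=0$ ($P(s)$ being injective with $\soc P(s)=L(s)\not\cong L(e)$), so the pairing is vacuously non-degenerate; against $P(e)$, the only possibly non-zero instance is $\Hom_\CatO(P(e),L(e))\otimes\Ext^2_\CatO(L(e),P(e))\to\Ext^2_\CatO(L(e),L(e))$, and applying $\Hom_\CatO(L(e),-)$ to $0\to L(s)\to P(e)\to L(e)\to0$, together with $\Ext^2_\CatO(L(e),L(s))=0$, shows that the induced map $\Ext^2_\CatO(L(e),P(e))\to\Ext^2_\CatO(L(e),L(e))$ is an isomorphism; hence this pairing is perfect too.

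I expect (S3) to be the only real obstacle. Once it is cut down by the preceding Lemma to the two indecomposable projectives, everything is a short computation inside the five-dimensional algebra $A$, and the single step that is not a bare dimension count is verifying that the composition $\Ext^2_\CatO(L(e),P(e))\to\Ext^2_\CatO(L(e),L(e))$ is genuinely non-zero, which the long exact $\Ext$-sequence of $0\to L(s)\to P(e)\to L(e)\to0$ handles. (Alternatively, the BGG self-dualities $P(s)^\vee\cong P(s)$ and $L(e)^\vee\cong L(e)$ present the Calabi-Yau pairing in manifestly symmetric form, which shortens the bookkeeping.)
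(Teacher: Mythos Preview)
Your proof is correct and follows essentially the same strategy as the paper: verify (S2) directly and then reduce (S3) to the indecomposable projectives via the preceding lemma. The only differences are stylistic. For (S2) on $L(e)$ the paper writes out all graded chain maps on the resolution $\{P(e)\to P(s)\to P(e)\}$ and exhibits explicit null-homotopies for the degree~$\pm 1$ ones, whereas you obtain the same Ext-dimensions by the cleaner route of applying $\Hom_\CatO(-,L(e))$. For (S3) the paper argues tersely that the generator $x$ ``factors through $P(e)$'' in both cases (and that $\Hom^*(P(s),L(e))=0=\Hom^*(L(e),P(s))$), which is exactly what your Gram-matrix check and your long exact sequence argument for $\Ext^2_\CatO(L(e),P(e))\xrightarrow{\sim}\Ext^2_\CatO(L(e),L(e))$ make precise; your version is a little more thorough, the paper's a little more visual, but the content is the same.
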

\begin{proof}
	To see that $P(s)$ is $0$-spherical,
	note that the non-trivial endomorphism $x\coloneqq ab$ of $P(s)$
	satisfies $x^2 = 0$ and $\End_{\CatO}(P(s))\isom\Complex[x]/(x)^2$,
	so $P(s)$ is 0-spherelike.
	Since $P(e)$ and $P(s)$ generate $D^\mathrm b(\mathcal O_0)$
	and since $x$ factors through $P(e)$,
	the composition pairing is non-degenerate, hence $P(s)$ is spherical.
	
	To see that $L(e)$ is $2$-spherical, consider its projective resolution
	$\{P(e)\to P(s)\to P(e)\}$.
	Apart from the identity, the only non-trivial graded endomorphisms 
	of this resolution are (scalar multiples of)
	\bgroup
	\tikzcdset{
		every diagram/.append style={
		sep=0.6em, cramped, font=\footnotesize, baseline=(B.base),
		every cell/.append style={inner sep=0.1ex}
	}}
	\begin{gather}
			\biggl(\,
				\begin{tikzcd}[ampersand replacement=\&]
					P(e) \rar \& P(s) \dar["\strut" name=B] \rar \& P(e) \dar \ar[dl, equal]\\
					\& P(e) \rar \& P(s) \rar \& P(e)
				\end{tikzcd}
			\,\biggr), 
			\biggl(\,
				\begin{tikzcd}[ampersand replacement=\&]
					\& P(e) \rar \dar["\strut" name=B] \& P(s) \dar \ar[dl, equal] \rar \& P(e) \\
					P(e) \rar \& P(s) \rar \& P(e)
				\end{tikzcd}
			\,\biggr),
			\biggl(
			\underbrace{\,
				\begin{tikzcd}[ampersand replacement=\&]
					\&\& P(e) \rar \dar[equal, "\strut" name=B] \& P(s) \rar \& P(e), \\
					P(e) \rar \& P(s) \rar \& P(e)
				\end{tikzcd}
			\,}_{{}\eqqcolon x}
			\biggr)
	\end{gather}
	\egroup
	of which the first two are null-homotopic
	with the indicated chain homotopies.
	Therefore, $\End_{\Db(\CatO)}(L(e)) = \Complex[x]/(x^2)$ with $x$ of degree two,
	which renders $L(e)$ 2-spherelike.
	The endomorphism $x$ factors through $P(e)$,
	and $\Hom^*_{\Db(\CatO)}(P(s), L(e)) = 0 = \Hom^*_{\Db(\CatO)}(L(e), P(s))$,
	hence the composition pairing is non-degenerate.
\end{proof}

\begin{remark}
	Simple modules in $\CatO_0$ only have trivial self-extensions \autocite[Prop.\ 3.1]{Humphreys:CatO}
	and thus cannot be $0$ or $1$-spherical.
\end{remark}

\subsection{Spherically twisting by \texorpdfstring{$P(s)$}{P(s)}}

The images of projectives under the cotwisting functor $T'_{P(s)}$ are
\begin{equation*}
	\begin{alignedat}{4}
		T'_{P(s)}P(s) &= \{\Mtrx{1\\x}\colon \degZero{P(s)} → P(s)⊕P(s)\}  
			&&\qis P(s)\hShift{1} 
			&&\stackrel{\text{\cref{eq:shuffling-sl2:imaegs-of-projectives}}}{=} 
			  \Sh{s} P(s)\hShift{1},\\
		T'_{P(s)}P(e) &= \{a\colon \degZero{P(e)} → P(s)\} 
			&&\qis M(s)\hShift{1} 
			&&\stackrel{\phantom{\text{\cref{eq:shuffling-sl2:imaegs-of-projectives}}}}{=} 
			  \Sh{s} P(e)\hShift{1}.
	\end{alignedat}
\end{equation*}
This is a first step towards a proof of the following:
\begin{theorem}%
	\label{thm:shuffling-is-twisting:sl2}
	There is a natural isomorphism $\LSh{s}\hShift{1}\isom T'_{P(s)}$ of autoequivalences of $\Db(𝓞_0(\SL_2))$.
\end{theorem}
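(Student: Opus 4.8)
The plan is to realise both $\LSh{s}\hShift{1}$ and $T'_{P(s)}$ as honest endofunctors of $\Kb(\Proj{\CatO_0})\simeq\Db(\CatO_0)$ and to identify them there. Under this equivalence $\LSh{s}=\cone(\unit_s\colon\id\Rightarrow\Theta_s)$, and $\Theta_s$ is a projective functor, hence restricts to an endofunctor of $\Proj{\CatO_0}$. Likewise $T'_{P(s)}=\cocone(\ev'\colon\id\Rightarrow\Xi'_{P(s)})$ as in \cref{def:twist-cotwist}, and for a projective module $P$, viewed as a complex concentrated in degree zero, $\Xi'_{P(s)}P=\lin^\bullet\bigl(\hom^\bullet_\CatO(P,P(s)),P(s)\bigr)\isom\Hom_{\CatO_0}(P,P(s))^*\otimes_\Complex P(s)$ is again a projective module in degree zero, so $\Xi'_{P(s)}$ too restricts to $\Proj{\CatO_0}$. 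It therefore suffices to produce a natural isomorphism $\Theta_s\isom\Xi'_{P(s)}$ of additive endofunctors of $\Proj{\CatO_0}$ that intertwines $\unit_s$ with $\ev'$: such an isomorphism extends termwise to bounded complexes of projectives, and passing to cones yields $\LSh{s}=\cone(\unit_s)\isom\cone(\ev')=\cocone(\ev')\hShift{-1}=T'_{P(s)}\hShift{-1}$, that is, $\LSh{s}\hShift{1}\isom T'_{P(s)}$.

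Since $\Proj{\CatO_0}$ is generated by $P(e)$ and $P(s)$, and every morphism between these is a $\Complex$-combination of $\id$, $a\colon P(e)\to P(s)$, $b\colon P(s)\to P(e)$ and $x=ab$, I only need compatible isomorphisms $\Theta_s P(w)\isom\Xi'_{P(s)}P(w)$ for $w\in\{e,s\}$. On objects this is essentially \cref{rmk:twisting-E-by-itself} and the remark preceding this theorem: one finds $\Xi'_{P(s)}P(e)\isom P(s)$ with $\ev'_{P(e)}$ given by $a\colon P(e)\to P(s)$, which matches $\unit_{s,P(e)}\colon P(e)=M(e)\into P(s)=\Theta_s M(e)$ up to a nonzero scalar that can be absorbed into the isomorphism, and $\Xi'_{P(s)}P(s)\isom P(s)\oplus P(s)$, computed with respect to the basis $\{\id,x\}$ of $\End_{\CatO_0}(P(s))$, with $\ev'_{P(s)}=\Mtrx{1\\x}$, which matches $\unit_{s,P(s)}$ by the naturality square \eqref{eqn:adj-of-Ps}. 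For the morphisms I would use the explicit formulas of \cref{rmk:induced-map-in-terms-of-shifted-copies}; since $ba=0$ in $A$ they give $\Xi'_{P(s)}a=\Mtrx{1\\0}\colon P(s)\to P(s)\oplus P(s)$ and $\Xi'_{P(s)}b=\Mtrx{0&1}\colon P(s)\oplus P(s)\to P(s)$, which are exactly $\Theta_s a$ and $\Theta_s b$ as displayed in the bottom row of \eqref{eqn:adj-of-Ps}. Taking cones then recovers the maps $\LSh{s}a\colon M(s)\into P(s)$ and $\LSh{s}b\colon P(s)\onto M(s)$ of \eqref{eq:shuffling-sl2:images-of-generating-morphisms}, shifted by $\hShift{1}$.

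The point that needs care — and the reason for setting things up as above — is that a natural transformation of triangulated functors is in general not determined by its values on a set of triangulated generators, since mapping cones are not functorial. This is harmless here precisely because $\LSh{s}\hShift{1}$ and $T'_{P(s)}$ have been written as cones of natural transformations of functors that preserve $\Proj{\CatO_0}$ and hence act on $\Kb(\Proj{\CatO_0})$ termwise: an isomorphism $\Theta_s\isom\Xi'_{P(s)}$ of endofunctors of $\Proj{\CatO_0}$ commuting with $a$ and $b$ automatically commutes with the differential of every bounded complex of projectives, and so assembles into a chain isomorphism natural in that complex. Once this framework is in place, the only work left is the handful of matrix identities indicated above, most of which are already forced by the images of $\LSh{s}$ computed in \cref{sec:shuffling-sl2}.
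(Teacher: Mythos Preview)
Your argument is correct and is the paper's proof in slightly different packaging. The paper invokes Morita's lemma (\cref{cor:morita-equivalence-and-induced-module}) to reduce to showing that the $A$-$A$-bimodules $M_{\Theta_s}=\Hom_\CatO(P,\Theta_s P)$ and $M_{\Xi'_{P(s)}}\cong P(s)^*\otimes_\Complex P(s)$ are isomorphic, and verifies this by writing out nine-element bases together with the left and right actions of the generators $a,b$ (the schematics \eqref{eq:shuffling-twisting-isomorphic:sl2-tensor} and \eqref{eq:shuffling-twisting-isomorphic:sl2-hom}). You instead check directly that $\Theta_s\cong\Xi'_{P(s)}$ as additive endofunctors of $\Proj{\CatO_0}$, compatibly with $\unit_s$ and $\ev'$, by evaluating on $P(e)$, $P(s)$ and the morphisms $a,b$. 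These are the same verification: an $A$-$A$-bimodule isomorphism is precisely a natural isomorphism of the associated right-exact functors, and your matrix identities $\Xi'_{P(s)}a=\Mtrx{1\\0}$, $\Xi'_{P(s)}b=\Mtrx{0&1}$ are exactly the left-action arrows in \eqref{eq:shuffling-twisting-isomorphic:sl2-hom}. Your framing has the merit of making explicit why taking cones is unproblematic (both $\Theta_s$ and $\Xi'_{P(s)}$ preserve projectives, so act termwise on $\Kb(\Proj{\CatO_0})$), a point the paper leaves implicit; the bimodule language, on the other hand, is more compact and is what the paper reuses verbatim for the $\SL_3$ and $\SL_n$ generalisations.
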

\begin{lemma}[\thmtitle{Morita}]
	Let $A$ and $B$ be rings.
	Any right exact functor $F\colon  \Mod{}[A] → \Mod{}[B]$ that preserves arbitrary direct sums
	is isomorphic to tensoring with the $A$-$B$-bimodule $FA$
	\autocite[Thm.\ II.2.3]{Bass:K-theory}.
\end{lemma}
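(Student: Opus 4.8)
The plan is to run the classical Eilenberg--Watts argument. First I would equip $FA$ with its $(A,B)$\-/bimodule structure from functoriality of $F$, then build an explicit natural transformation $\eta\colon (-)\otimes_A FA\Rightarrow F$, and finally show it is an isomorphism by reducing successively to free modules and to $A$ itself.

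For the bimodule structure: for $a\in A$, left multiplication $\lambda_a\colon A\to A$, $x\mapsto ax$, is a morphism of right $A$\-/modules, and $\lambda_a\lambda_{a'}=\lambda_{aa'}$, so $a\mapsto F(\lambda_a)$ is a ring homomorphism $A\to\End_B(FA)$; this makes $FA$ a left $A$\-/module commuting with its given right $B$\-/action, so that $M\otimes_A FA$ is defined in $\Mod{}[B]$ for every $M\in\Mod{}[A]$. I would also record that $F$ is additive: by hypothesis it preserves (in particular finite) direct sums, whence $F(A\oplus A)\cong FA\oplus FA$ compatibly with the structure maps.

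To define $\eta$: for $m\in M$ let $\rho_m\colon A\to M$, $a\mapsto ma$, which is a morphism of right $A$\-/modules, and set $\eta_M(m\otimes x)\coloneqq F(\rho_m)(x)$. The verifications are routine diagram identities: $A$\-/balancedness follows from $\rho_{ma}=\rho_m\circ\lambda_a$ and hence $F(\rho_{ma})=F(\rho_m)F(\lambda_a)$; $B$\-/linearity is immediate since $F(\rho_m)$ is a $B$\-/module map; additivity in $m$ uses $\rho_{m+m'}=\rho_m+\rho_{m'}$ and additivity of $F$; and naturality in $M$ uses $f\circ\rho_m=\rho_{f(m)}$ for a morphism $f\colon M\to N$.

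It remains to see that $\eta_M$ is an isomorphism for all $M$. On $M=A$ one has $\rho_1=\id_A$, so under the canonical identification $A\otimes_A FA\cong FA$ the map $\eta_A$ becomes the identity, hence is an isomorphism. Since $\eta$ is natural and both $(-)\otimes_A FA$ and $F$ preserve arbitrary direct sums, $\eta_{A^{(I)}}$ is an isomorphism for every index set $I$. For general $M$, pick a free presentation $A^{(J)}\to A^{(I)}\to M\to 0$; applying the right exact functors $(-)\otimes_A FA$ and $F$ and using that they preserve coproducts yields a commutative ladder with exact rows
\[
  A^{(J)}\otimes_A FA\to A^{(I)}\otimes_A FA\to M\otimes_A FA\to 0,
  \qquad
  FA^{(J)}\to FA^{(I)}\to FM\to 0,
\]
whose left-hand vertical maps $\eta_{A^{(J)}}$ and $\eta_{A^{(I)}}$ are isomorphisms; as $M\otimes_A FA$ and $FM$ are the respective cokernels, $\eta_M$ is an isomorphism too. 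I expect no real obstacle; the only points that need a little care are checking that the left $A$\-/action on $FA$ genuinely commutes with the right $B$\-/action, and confirming that the two hypotheses on $F$ together supply exactly what is used above (additivity of $F$, and preservation of the relevant cokernels and coproducts).
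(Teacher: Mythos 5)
Your proof is correct and is the standard Eilenberg--Watts argument. The paper does not actually supply a proof of this lemma: it is stated as a cited fact, referring to Theorem~II.2.3 of Bass's book, so there is no proof in the paper to compare against. Your construction --- equipping $FA$ with a left $A$-action via $a\mapsto F(\lambda_a)$, defining $\eta_M(m\otimes x)=F(\rho_m)(x)$, and then reducing from a general $M$ through a free presentation to free modules and finally to $A$ itself --- is exactly what one expects, and you use the two hypotheses precisely where they are needed: preservation of arbitrary direct sums handles $A^{(I)}$ (and, as you note, supplies additivity of $F$ along the way), while right exactness lets you pass from the free presentation to the cokernel $M$.
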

\newcommand{\FBim}[1]{M_{#1}}
\begin{corollary}
	\label{cor:morita-equivalence-and-induced-module}
	For abelian categories $𝓐$ and $𝓑$ with projective generators $P_𝓐$ and $P_𝓑$,
	Morita's theorem \eqref{eqn:morita-eq} 
	allows us to identify $𝓐$ and $𝓑$ with $\Mod{}[\End_𝓐(P_𝓐)]$ and $\Mod{}[\End_𝓑(P_𝓑)]$ respectively.
	Then any right exact functor $F\colon  𝓐 \to 𝓑$ that commutes with arbitrary direct sums 
	can be identified with the functor
	\begin{equation*}
		-⊗_{\End_𝓐(P_𝓐)} \FBim{F} \colon \Mod{}[\End_𝓐(P_𝓐)] \to \Mod{}[\End_𝓑(P_𝓑)],
	\end{equation*}
	where $\FBim{F} \coloneqq \Hom_𝓑(P_𝓑, FP_𝓐)$ 
	is an $\End_𝓐(P_𝓐)$-$\End_𝓑(P_𝓑)$-bimodule
	on which elements $a\in \End_𝓐(P_𝓐)$ and $b\in \End_𝓑(P_𝓑)$
	act by $a\ldot f\ldot b = Fa\circ f\circ b$
	for $f\in \FBim{F}$.
\end{corollary}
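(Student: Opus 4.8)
The statement is the transport of the preceding Morita lemma through the two equivalences of \eqref{eqn:morita-eq}, so the plan is short. Write $A\coloneqq\End_\mathcal{A}(P_\mathcal{A})$ and $B\coloneqq\End_\mathcal{B}(P_\mathcal{B})$, and let $\Phi_\mathcal{A}\colon\mathcal{A}\xrightarrow{\simeq}\Mod{}[A]$, $M\mapsto\Hom_\mathcal{A}(P_\mathcal{A},M)$, and likewise $\Phi_\mathcal{B}$, be the Morita equivalences. Under these identifications $F$ becomes the functor $\tilde F\coloneqq\Phi_\mathcal{B}\circ F\circ\Phi_\mathcal{A}^{-1}\colon\Mod{}[A]\to\Mod{}[B]$. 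First I would check that $\tilde F$ still meets the hypotheses of the lemma: an equivalence of abelian categories is exact and preserves all existing (co)products, so $\Phi_\mathcal{A}^{-1}$ and $\Phi_\mathcal{B}$ are right exact and commute with arbitrary direct sums, hence so does the composite $\tilde F$. The Morita lemma then supplies a natural isomorphism $\tilde F\isom -\otimes_A\tilde F(A)$, where $\tilde F(A)$ denotes the image under $\tilde F$ of $A$ seen as a right module over itself.

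Next I would identify $\tilde F(A)$ with $M_F$. Since $\Phi_\mathcal{A}(P_\mathcal{A})=\Hom_\mathcal{A}(P_\mathcal{A},P_\mathcal{A})=A$ as right $A$-modules, we have $\Phi_\mathcal{A}^{-1}(A)\isom P_\mathcal{A}$, and therefore
\[
	\tilde F(A)=\Phi_\mathcal{B}\bigl(F(P_\mathcal{A})\bigr)=\Hom_\mathcal{B}\bigl(P_\mathcal{B},F P_\mathcal{A}\bigr)=M_F.
\]
Transporting the isomorphism $\tilde F\isom -\otimes_A M_F$ back along $\Phi_\mathcal{A}$ and $\Phi_\mathcal{B}$ yields the asserted identification of $F$ with $-\otimes_A M_F$.

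It then remains to verify that the $A$-$B$-bimodule structure on $M_F$ produced by this transport is the one written in the statement. The right $B$-action on $\Hom_\mathcal{B}(P_\mathcal{B},FP_\mathcal{A})$ inherited from $\Phi_\mathcal{B}$ is precomposition, $f\ldot b=f\circ b$ for $b\in\End_\mathcal{B}(P_\mathcal{B})$; and left multiplication by $a\in A$ on the right $A$-module $A$ corresponds under $\Phi_\mathcal{A}^{-1}$ to the endomorphism $a$ of $P_\mathcal{A}$, which $F$ followed by $\Phi_\mathcal{B}$ turns into postcomposition with $Fa$, that is $a\ldot f=Fa\circ f$. Combining the two gives $a\ldot f\ldot b=Fa\circ f\circ b$, exactly as claimed. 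I expect the only real work to be this last bit of bookkeeping about module structures together with the naturality of the transported isomorphism; the existence of the isomorphism itself is immediate from the lemma, since its hypotheses are stable under pre- and post-composition with equivalences of abelian categories.
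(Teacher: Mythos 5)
The paper states this result as a corollary of the preceding Morita lemma and gives no explicit proof, so there is nothing to compare against directly. Your argument — transport $F$ through the two Morita equivalences, note that an equivalence is exact and preserves direct sums so the transported functor still satisfies the hypotheses of the lemma, identify $\Phi_{\mathcal A}^{-1}(A)\cong P_{\mathcal A}$ to compute $\tilde F(A)\cong\Hom_{\mathcal B}(P_{\mathcal B},FP_{\mathcal A})$, and then trace through the left/right actions (postcomposition with $Fa$, precomposition with $b$) — is exactly the implicit reasoning and is correct.
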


\begin{proof}[{Proof of \cref{thm:shuffling-is-twisting:sl2}}]
	By the corollary there are natural isomorpisms
	\begin{alignat*}{2}
		\LSh{s}\hShift{1} &\stackrel{\text{def}}{=} \{ \degZero{\id_𝓞} \Rightarrow 𝛩_s \} &&≅ -⊗_A \{ A \to \FBim{𝛩_s} \},\\*
		T'_{P(s)} &\stackrel{\text{def}}{=} \{ \degZero{\id_𝓞} \Rightarrow \Xi'_{P(s)} \} &&≅ -⊗_A \{ A \to \FBim{\Xi'_{P(s)}} \},
	\end{alignat*}
	such that it suffices to show $\FBim{𝛩_s} \isom \FBim{\Xi'_{P(s)}}$.
	Recall that $\Xi'_{P(s)} = \lin(\hom^\bullet_\CatO(-, P(s)), P(s))$.
	By finite dimensionality,
	there is an isomorphism $\FBim{\Xi'_{P(s)}} ≅ P(s)^* ⊗_𝐂 P(s)$ of $A$-$A$-bimodules.
	Consider the canonical vector space basis $\{\trivpath{s}, s\from e, s\from e\from s\}$ of $P(s)=\trivpath{s}A$
	and the dual basis of $P(s)^*$.
	A basis of $\FBim{\Xi'_{P(s)}}$ is given by
	the nine pairwise tensor products in the schematic
	\begin{equation}
		\label{eq:shuffling-twisting-isomorphic:sl2-tensor}
		M_{\Xi'} \isom P(s)^* ⊗_𝐂 P(s)\colon \quad
		\begin{tikzpicture}[mth, baseline=(B.base)]
			\matrix (A) [
				matrix of math nodes, 
				row sep=0mm, 
				column sep=-2mm, 
				text height=2ex, text depth=.5ex, 
				column 2/.style={anchor=base east},
				column 4/.style={anchor=base west}
			] {
					~& (s←e←s)^* 			&				& \trivpath{s}	&~\\
					~& |[alias=B]| (s←e)^* 	& \quad⊗\quad	& (s←e)			&~\\
					~& \trivpath{s}^*			&				& (s←e←s),		&~\\
				};
				\node[braced box={T}, fit=(A-1-2) (A-3-2)] {} ;
				\node[braced box'={T'},fit=(A-1-4) (A-3-4)] {} ;
				\draw
					(A-1-1) edge[|->, out=195, in=165] node[swap]{$(e←[b]s)_*$} (A-2-1.160)
					(A-2-1.200) edge[|->, out=195, in=165] node[swap]{$(s←[a]e)_*$} (A-3-1)
					(A-1-5)	edge[|->, out=-15, in=15] node{$∘(s←[a]e)$} (A-2-5.20)
					(A-2-5.340) edge[|->, out=-15, in=15] node{$∘(s←[b]e)$} (A-3-5);
		\end{tikzpicture}
	\end{equation}
	with the indicated action on basis vectors.
	To describe the $A$-$A$-bimodule action on $\FBim{\Theta_s} = \Hom_A(P, \Theta_s P)$
	in terms of a vector space basis,
	we introduce the following notation.
	Recall that $P=P(e)⊕P(s)$ and $\Theta_s P\isom P(s)^{⊕3}$.
	We enumerate the summands of $\Theta_s P = P(s)_1⊕P(s)_2⊕P(s)_3$.
	We then abbreviate, \eg, the morphism $\Mtrx{0&0\\0&0\\0&x} \in \Hom_A(P, \Theta_s P)$,
	by  $P(s)_3 \xleftarrow{x} P(s)$.
	For $\Theta_s$, the naturality diagram \eqref{eqn:adj-of-Ps} of $\eta\colon\id\Rightarrow\Theta_s$ 
	shows that the images under $\Theta_s$ of the morphisms $a$ and $b$ generating $\End_\mathcal{C}(P)$ are
	\settowidth{\algnRef}{${}⊕{}$}
	\begin{equation}
		\begin{tikzcd}[column sep={\the\algnRef}, nodes={inner xsep=0pt}, row sep=small]
			P(e)\rar[phantom, "⊕"] & 
			P(s)\ar[dl, "b"'] &[4em] 
			P(s)_1 \rar[phantom, "⊕"] & 
			P(s)_2 \rar[phantom, "⊕"] & 
			P(s)_3 \ar[dll, "\id_{P(s)}"' {near end, inner sep=0pt, outer sep=0pt}]
			\\
			P(e)\rar[phantom, "⊕"] \ar[dr, "a"'] & 
			P(s) \ar[phantom, "\xmapsto{\Theta_s}", r]& 
			P(s)_1 \rar[phantom, "⊕"] \ar[dr, "\id_{P(s)}"' {near start, inner sep=0pt, outer sep=0pt}] & 
			P(s)_2 \rar[phantom, "⊕"] & P(s)_3
			\\
			P(e)\rar[phantom, "⊕"] & 
			P(s) & 
			P(s)_1 \rar[phantom, "⊕"] & 
			P(s)_2 \rar[phantom, "⊕"] & 
			P(s)_3\mathrlap{.}
		\end{tikzcd}
	\end{equation}
	A vector space basis of $\FBim{𝛩_s}$ is given 
	by the nine ways to map a summand $P(w)$ of $P$ from the right of the following schematic
	to a summand $P(s)_i$ of $\Theta_s P(s)$ on the left of
	\begin{equation}
		\label{eq:shuffling-twisting-isomorphic:sl2-hom}
		\FBim{𝛩_s}\colon\quad
		\begin{tikzpicture}[mth, ampersand replacement=\&]
			\matrix (A) [
			matrix of math nodes, 
			row sep={3.5ex,between origins},
			column sep=10mm,
			column 1/.style={anchor=base east, text width=width("$P(s)_2$")},
			column 3/.style={anchor=base west, text width=width("$P(s)$")}
			] {
				P(s)_3 \&  \& P(s)\\
				P(s)_1 \& P(s) \& P(e)\\
				P(s)_2 \& \& P(s) \\
			};
			\draw[->] (A-2-2)
			edge[->, out=180, in=0] (A-1-1)
			edge[->, out=180, in=0] (A-2-1)
			edge[->, out=180, in=0] (A-3-1)
			edge[<-, in=180, out=0, "$\id$" very near end] (A-1-3)
			edge[-, in=180, out=0] (A-2-3)
			edge[-, in=180, out=0, "$x$"' very near end] (A-3-3);
			\draw
			(A-1-1.west)  edge[|->, out=195, in=165] node[swap]{$\Theta_s(P(e)\xleftarrow{b} P(s))\circ $} (A-2-1.170)
			(A-2-1.190)   edge[|->, out=195, in=165] node[swap]{$\Theta_s(P(s)\xleftarrow{a} P(e))\circ$} (A-3-1.west)
			(A-1-3.east)  edge[|->, out=-15, in=15] node{$∘(P(s)←[a]P(e))$} (A-2-3.10)
			(A-2-3.-10)   edge[|->, out=-15, in=15] node{$∘(P(e)←[b]P(s))$} (A-3-3.east);
		\end{tikzpicture}
	\end{equation}
	with the bimodule action as indicated.
	Comparison of \eqref{eq:shuffling-twisting-isomorphic:sl2-tensor} and \eqref{eq:shuffling-twisting-isomorphic:sl2-hom} shows
	that the obvious isomorphism $\FBim{\Theta_s} \isom \FBim{\Xi'_{P(s)}}$ of vector spaces
	is an isomorphism of $A$-$A$-bimodules.
\end{proof}

\subsection{Spherically twisting by \texorpdfstring{$L(e)$}{L(e)}}
\label{sec:twisting-by-Le}
We first note that $\LSh{s} L(e) = L(e)\hShift{-1}$;
namely, from the images in \cref{eq:shuffling-sl2:imaegs-of-projectives}
we get that the projective resolution $\{P(e) \to P(s) \to \degZero{P(e)}\}$ of $L(e)$
is mapped under $\LSh{s} L(e)$ to $\{M(s) \to P(s) \to \degZero{M(s)}\}$,
which is quasi-isomorphic to $L(e)\hShift{-1}$.

Since $L(e)$ is 2-spherical as we have seen in \cref{sec:spherical-objects-sl2},
\cref{rmk:twisting-E-by-itself} implies
that if there is any isomorphism $T'_{L(e)} \isom \LSh{s}\hShift{k}$,
then the shift $k$ must be zero.
We shall now show that $T'_{L(e)} \isom \LSh{s}\hShift{0}$ indeed.

Computing $T'_{L(e)}$ involves $\hom^\bullet_{\CatO}(-, L(e))$,
for which we employ the following notation.
Between angle brackets, we write down
$\Complex$-bases for the homological components of $\hom^\bullet_{\CatO}(-, L(e))$.
Every basis element, which is a morphism of complexes, is written
with the codomain and the (shifted) domain written horizontally 
and the map of complexes vertically.
The horizontal arrows between the $\langle\cdots\rangle$'s
carry matrix representations of the boundary map of $\hom^\bullet_{\CatO}(-, L(e))$
\wrt\ these bases.
From the projective resolution $L(e)\qis\{P(e)\to P(s)\to P(e)\}$,
we obtain
{%
\tikzset{
	ampersand replacement=\&,
	commutative diagrams/diagrams={
		row sep=small,
		column sep=tiny,
		nodes={inner sep=1pt, font=\scriptsize}
	}
}%
\renewcommand{\xto}[1]{\xrightarrow{\mathmakebox[\widthof{\scriptsize$\!\!\!\Mtrx{0\\1}\!\!\!$}]{#1}}}%
\begin{align}
	\mathmakebox[1em][l]{\hom^\bullet_\CatO(P(s), L(e))} \notag\\*%
		&= \left\{
			\left\langle
				\begin{tikzcd}[baseline=(B.base)]
					P(s) \ar[d, "b"' name=B] \\
					P(e) \ar[r] \& P(s) \ar[r] \& P(e)
				\end{tikzcd}
	 		\right\rangle
			\xto{\Mtrx{0\\1}}
			\left\langle
				\begin{tikzcd}[baseline=(B.base)]
					\& P(s) \ar[d, "{\id, x}"' name=B]\\
					P(e) \ar[r] \& P(s) \ar[r] \& P(e)
				\end{tikzcd}
			\right\rangle
			\xto{(1, 0)}
			\left\langle
				\begin{tikzcd}[baseline=(B.base)]
					\&\& P(s) \ar[d, "b" name=B]\\
					P(e) \ar[r] \& P(s) \ar[r] \& P(e)
				\end{tikzcd}
			\right\rangle
		\right\}
	\notag\\*
		&\qis 0, \label{eq:twisting-sl2-by-Le:Hom-Ps-Le}
	\\[2.5pt]
	\mathmakebox[1em][l]{\hom^\bullet_\CatO(L(e), P(s))} \notag\\*
		&= \left\{
		\left\langle
			\begin{tikzcd}[baseline=(B.base)]
				P(e) \ar[r] \& P(s) \ar[r] \& P(e) \ar[d, "b"' name=B]\\
				\&\& P(s)
			\end{tikzcd}
		\right\rangle
		\xto{\Mtrx{0\\1}}
		\left\langle
			\begin{tikzcd}[baseline=(B.base)]
				P(e) \ar[r] \& P(s) \ar[r] \ar[d, "{\id, x}"' name=B] \& P(e) \\
				\& P(s)
			\end{tikzcd}
		\right\rangle
		\xto{(1, 0)}
		\left\langle
			\begin{tikzcd}[baseline=(B.base)]
				P(e) \ar[d, "b"' name=B] \ar[r] \& P(s) \ar[r] \& P(e)\\
				P(s)
			\end{tikzcd}
		\right\rangle
		\right\}
	\notag\\*
	&\qis 0 \label{eq:twisting-sl2-by-Le:Hom-Le-Ps},
\intertext{
	We thus get immediately from the  definition \ref{def:twist-cotwist} of $T'$ and $T$
	that $\Xi'_{L(e)} P(s) = 0 = \Xi_{L(e)} P(s)$
	and thus that $T'_{L(e)} P(s) = P(s) = T_{L(e)} P(s)$.
	For $P(e)$, we obtain
}
		\mathmakebox[1em][l]{\hom^\bullet_\CatO(P(e), L(e))} \notag\\*%
	&= \Biggl\{
	\left\langle
	\underbrace{
		\begin{tikzcd}[baseline=(B.base)]
			P(e) \ar[d, "\id"' name=B]\\
			P(e) \ar[r] \& P(s) \ar[r] \& P(e)
		\end{tikzcd}
	}_{\eqqcolon i^{(\bar{2})}}
	\right\rangle
	\xto{1}
	\left\langle
	\underbrace{
		\begin{tikzcd}[baseline=(B.base)] 
			\& P(e) \ar[d, "a"' name=B]\\
			P(e) \ar[r] \& P(s) \ar[r] \& P(e)
		\end{tikzcd}
	}_{\eqqcolon a^{(\bar{1})}}
	\right\rangle
	\xto{0}
	\left\langle
	\underbrace{
		\begin{tikzcd}[baseline=(B.base)]
			\&\& P(e) \ar[d, name=B, "\id"]\\
			P(e) \ar[r] \& P(s) \ar[r] \& P(e)
		\end{tikzcd}
	}_{\eqqcolon i^{(0)}}
	\right\rangle
	\Biggr\}
	\label{eq:twisting-sl2:Hom-Pe-Le}
	\\*[-2ex]
	&\qis \langle i^{(0)}\rangle,
	\notag
	\\[2.5pt]
	\mathmakebox[1em][l]{\hom^\bullet_\CatO(L(e), P(e))} \notag\\*%
	&= \Biggl\{
	\left\langle
	\underbrace{
		\begin{tikzcd}[baseline=(B.base)]
			P(e) \ar[r] \& P(s) \ar[r] \& P(e) \ar[d, "\id"' name=B]\\
			\&\& P(e) 
		\end{tikzcd}
	}_{\eqqcolon i^{(0)}}
	\right\rangle
	\xto{1}
	\left\langle
	\underbrace{
		\begin{tikzcd}[baseline=(B.base)]
			P(e) \ar[r] \& P(s) \ar[r] \ar[d, "b"' name=B] \& P(e) \\
			\& P(e)
		\end{tikzcd}
	}_{\eqqcolon b^{(1)}}
	\right\rangle
	\xto{0}
	\left\langle
	\underbrace{
		\begin{tikzcd}[baseline=(B.base)]
			P(e) \ar[r] \ar[d, "\id"' name=B]\& P(s) \ar[r] \& P(e) \\
			P(e)
		\end{tikzcd}
	}_{\eqqcolon i^{(2)}}
	\right\rangle
	\Biggr\}
	\label{eq:twisting-sl2:Hom-Le-Pe}
	\\*[-2ex]
	&\qis \langle i^{(2)}\rangle \hShift{2},
	\notag
\end{align}
}%
We denote the morphisms of complexes as indicated in the above formulae.
Before we compute the images of $P(e)$ under $T'_{L(e)}$ and $T_{L(e)}$,
we state the following:

\begin{lemma}[(Elimination of trivial summands)]
	\label{lem:gauss-elimination}
	In any abelian category, 
	if the map $d$ in the chain complex in the first line of the diagram
	\[
	\begin{tikzcd}[ampersand replacement=\&, row sep=scriptsize]
		\cdots \rar 
		\& U \rar{\Mtrx{a\\b}}\dar[equal] 
		\& V \oplus W \rar{\Mtrx{c&d\\e&f}}\dar{(1, 0)} 
		\& X \oplus Y \rar{(g, h)} \dar{(-fd', 1)}
		\& Z \rar \dar[equal] \& \cdots\\
		\cdots \rar
		\& U \rar[swap]{b}
		\& V \rar[swap]{e - fd'c}
		\& Y \rar[swap]{h}
		\& Z \rar \& \cdots
	\end{tikzcd}
	\]
	is an isomorphism with inverse $d'$,
	the second line is a chain complex and the vertical map is a quasi-isomorphism.
\end{lemma}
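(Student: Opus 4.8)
The statement is the ``Gaussian elimination'' lemma for complexes, and the plan is to prove slightly more than is asked: that the displayed vertical chain map, which I will call $\Phi$ (going from the top complex $C$ to the bottom complex $C'$), is a \emph{homotopy} equivalence. This implies at once that $\Phi$ is a quasi-isomorphism and that $C'$ really is a complex, and it costs no boundedness hypothesis. Concretely I would exhibit an explicit backward chain map $\Psi\colon C'\to C$ together with an explicit homotopy, and check that $\Phi\Psi=\id_{C'}$ on the nose and that $\Psi\Phi\simeq\id_C$.

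First I would write down the identities hidden in $\partial^2=0$ for $C$ along the displayed four-term stretch, namely $ca+db=0$, $ea+fb=0$, $gc+he=0$ and $gd+hf=0$. Combined with $d'd=\id$ and $dd'=\id$, these force both reduced composites to vanish, so that $C'$ is a complex; and they make $\Phi$ a chain map, the only non-trivial squares being the middle one, which closes because $(-fd',\,1)\,\Mtrx{c & d\\ e & f}=(e-fd'c,\;0)$ by $d'd=\id$, and the rightmost one, which closes because $gd+hf=0$ gives $g=-hfd'$; the leftmost square commutes by inspection.

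Next I would take $\Psi$ to be the identity on the two unchanged terms, the map $\Mtrx{\id\\ -d'c}\colon V\to V\oplus W$ on the left inner term, and $\Mtrx{0\\ \id}\colon Y\to X\oplus Y$ on the right inner term; that $\Psi$ is a chain map again reduces to $dd'=\id$ and to the relation $ca+db=0$. One has $\Phi\Psi=\id_{C'}$ strictly, since $(1,0)\,\Mtrx{\id\\ -d'c}=\id$ and $(-fd',\,1)\,\Mtrx{0\\ \id}=\id$. For the other composite I would use the homotopy $H$ whose only non-zero component is the map $X\oplus Y\to V\oplus W$ sending $(x,y)\mapsto(0,\,-d'x)$: a short block computation, using only $dd'=\id_X$ and $d'd=\id_W$, shows that $\partial H+H\partial$ is the endomorphism of $C$ which is $\Mtrx{0 & 0\\ -d'c & -\id}$ on $V\oplus W$, is $\Mtrx{-\id & 0\\ -fd' & 0}$ on $X\oplus Y$, and vanishes on the outer terms, and this is exactly $\Psi\Phi-\id_C$. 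Therefore $\Phi$ is a homotopy equivalence with homotopy inverse $\Psi$, hence in particular a quasi-isomorphism.

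I do not expect a conceptual obstacle: the argument is entirely block-matrix bookkeeping. The two points that repay care are the sign in the homotopy (its component is $-d'x$, not $+d'x$) and the --- easily overlooked --- fact that showing ``$C'$ is a complex'' and that the projections are chain maps genuinely uses the relations $\partial^2_C=0$ of the ambient complex, whereas the homotopy identity $\Psi\Phi-\id_C=\partial H+H\partial$ needs nothing beyond the invertibility of $d$.
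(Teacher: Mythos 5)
Your argument is correct and is the standard one for this Gaussian-elimination lemma: you exhibit the displayed projection $\Phi$ as a strong deformation retraction by producing an explicit section $\Psi$ with $\Phi\Psi = \id$ and a homotopy $H$ with $\Psi\Phi - \id = \partial H + H\partial$, which yields both the stated quasi-isomorphism and, as a byproduct, that the bottom row is a complex. The paper itself states the lemma without proof, so there is no in-text argument to compare against; your block computations all close, the only nontrivial inputs being $dd' = \id_X$ and $d'd = \id_W$, and the sign on $H = \Mtrx{0 & 0\\ -d' & 0}$ is indeed the one that makes the homotopy identity hold. One thing you handle only tacitly and should make explicit: the bottom-left arrow in the paper's diagram is labelled $b$, but $b$ is the $U\to W$ component of $\Mtrx{a\\b}$, so it does not even typecheck as a map $U\to V$ and would make the left square fail. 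It must be a typo for $a$, which is what your ``commutes by inspection'' and your choice of section $\Psi$ both silently presuppose; with $a$ in place, the reduced-row relations $(e - fd'c)a = ea + fd'db = ea + fb = 0$ and $h(e - fd'c) = he + gdd'c = he + gc = 0$ go through as you claim.
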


{%
\predisplaypenalty=1000
We now compute the images $T'_{L(e)}P(e)$ and $T_{L(e)}P(e)$ parallelly in two columns.
These are the respective total complexes of the triple complexes
\tikzcdset{every diagram/.append style = {
	row sep={.7cm,between origins}, 
	column sep={1.0cm,between origins}, 
	nodes={inner sep=1pt},
	ampersand replacement=\&,
	cramped,
	baseline=(B.base)
}}
\begin{align}
	\notag
	&\eqsp T'_{L(e)} P(e) &&\eqsp T_{L(e)} P(e)\\*
	\notag
	&=\bigl\{
		\bgroup\color{gray}
			\degZero{P(e)}
			\xto{\mkern -5mu \ev' \mkern-5mu}
		\egroup
		\underbrace{\lin\bigl(\hom^\bullet_\CatO(P(e),L(e)), L(e)\bigr)}_{\Xi'_{L(e)}}\bigr\}
	&&= \underbrace{\bigl\{\hom^\bullet_\CatO\bigl(L(e),P(e)\bigr) \otimes L(e)}_{\Xi_{L(e)} P(e)}
		\bgroup\color{gray}
			\xto{\mkern -5mu \ev \mkern-5mu}
			\degZero{P(e)}
		\egroup
		\bigr\}\\*
\intertext{
	with the gray $P(e)$ in degree $0$.
	Recall from \cref{rmk:lin-and-tensor}
	that the double complexes
	$\lin(\hom^\bullet_{\CatO}(P(e),L(e)), L(e))$ and $\hom^\bullet_{\CatO}(P(e),L(e),P(e)) \otimes L(e)$,
	are sums of shifted copies of $L(e)$, indexed by basis elements
	from \cref{eq:twisting-sl2:Hom-Pe-Le,eq:twisting-sl2:Hom-Le-Pe}.
	Working this out gives
	that $T'_{L(e)} P(e)$ and $T_{L(e)} P(e)$ respectively
	are the triple complexes
}
		&=
	\label{eq:cotwist-of-P(e)-wrt-L(e)}
	\left\{
		\begin{tikzcd}[row sep={.7cm,between origins}, column sep={1.0cm,between origins}, nodes={inner sep=1pt},ampersand replacement=\&]
			     P(e)_{i^{(0)}}       \ar[rr, "a"]\ar[dd,"0"']
			\&\& P(s)_{i^{(0)}}       \ar[rr,"b"]
			\&\& P(e)_{i^{(0)}}       \ar[dd,"0"]\\
			\& |[color=gray]| P(e)    \ar[gray, dddl, "\id"]\ar[gray, dr, "a"]\ar[gray, urrr, "\id" very near start, dashed]\\
			     P(e)_{a^{(\bar{1})}}   \ar[rr, "a", cross line]\ar[dd,"\id"', dashed]	
			\&\& P(s)_{a^{(\bar{1})}}   \ar[from=uu, cross line, "0"]\ar[rr,"b"]\ar[dd,"\id", dashed] 	
			\&\& P(e)_{a^{(\bar{1})}}   \ar[dd,"\id", dashed]\\\\
			     P(e)_{i^{(\bar{2})}} \ar[rr, "a"']
			\&\& P(s)_{i^{(\bar{2})}} \ar[rr,"b"']
			\&\& P(e)_{i^{(\bar{2})}}
		\end{tikzcd}
	\right\}
	&&=
	\left\{
	\begin{tikzcd}
			     P(e)_{i^{(0)}}      \ar[rr, "a"] \ar[dd,"\id"', dashed]
			\&\& P(s)_{i^{(0)}}      \ar[rr,"b"] 
			\&\& P(e)_{i^{(0)}}      \ar[dd,"\id", dashed]\\\\ |[alias=B]|
			     P(e)_{b^{(1)}}      \ar[rr, "a"] \ar[dd,"0"']
			\&\& P(s)_{b^{(1)}}      \ar[from=uu, "\id", dashed] \ar[rr,"b"] \ar[dd,"0"']  	
			\&\& P(e)_{b^{(1)}}      \ar[dd,"0"]\\
		\&\&\& |[color=gray]| P(e) \ar[gray, from=dlll, cross line, "\id", dashed]\ar[gray, from=ul, "b"]\ar[gray, from=uuur, cross line, "\id"' near start]\\
			     P(e)_{i^{(2)}}      \ar[rr, "a"']
			\&\& P(s)_{i^{(2)}}      \ar[rr,"b"'] 
			\&\& P(e)_{i^{(2)}}
	\end{tikzcd}
	\right\} \\
\intertext{
	still with the gray $P(e)$ in degree zero.
	Here, the black $P(-)$'s are indexed by 
	basis elements from \cref{eq:twisting-sl2:Hom-Pe-Le,eq:twisting-sl2:Hom-Le-Pe},
	according to the copy of $L(e)$ they belong to.
	Since $\hom^\bullet_{\CatO}(P(e),L(e))$, $\hom^\bullet_{\CatO}(L(e),P(e))$
	and $L(e)$ are respectively concentrated 
	in degrees $-2$ to $0$, $0$ to $2$ and $-2$ to $0$,
	the two double complexes $\Xi'_{L(e)} P(e)$ and $\Xi_{L(e)} P(e)$ printed in black
	have their degree-zero summands on the secondary diagonal.
	According to \cref{rmk:evaluation-in-terms-of-shifted-copies},
	the non-zero components of the (co)evaluation maps therefore consist of the gray morphisms.
	Using the elimination lemma \ref{lem:gauss-elimination},
	we may eliminate all summands adjacent to one of the dashed identity morphisms;
	this shows that there are quasi-isomorphisms
}
	\notag
	&\qis \{P(e) \xto{a} \degZero{P(s)}\},
	&&\qis \{\degZero{P(s)} \xto{b} P(e)\} \\*
	\notag
	&\qis \LSh{s} P(e),
	&& \qis \RCsh{s} P(e),
\end{align}
}%
by comparison with \eqref{eq:shuffling-sl2:imaegs-of-projectives}.
Hence, we already have shown half of the following:

\needspace{3\baselineskip}
\begin{theorem}
	There is a natural isomorphism $T'_{L(e)} \isom \LSh{s}$.
\end{theorem}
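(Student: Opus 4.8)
The plan is to promote the object-level identities just obtained --- namely $T'_{L(e)}P(s)=P(s)=\LSh{s}P(s)$ and $T'_{L(e)}P(e)\qis\LSh{s}P(e)$ --- to a natural isomorphism, in the spirit of the proof of \cref{thm:shuffling-is-twisting:sl2}. Since $\CatO_0$ has finite global dimension, $\Db(\CatO_0)\simeq\Kb(\mathcal P)$ for $\mathcal P\coloneqq\operatorname{add}(P)$, $P\coloneqq P(e)\oplus P(s)$, and this category is generated as a triangulated category by $P(e)$ and $P(s)$. Both functors in question admit chain-level models on $\Kb(\mathcal P)$: the model of $\LSh{s}$ sends $P(s)\mapsto P(s)$ and $P(e)\mapsto\{P(e)\to\degZero{P(s)}\}$ with the action on the generating morphisms recorded in \eqref{eq:shuffling-sl2:images-of-generating-morphisms:lifts}, extended to $\Kb(\mathcal P)$ by total complexes; the model of $T'_{L(e)}$ is the chain-level functor $\cocone(\ev'\colon\id\Rightarrow\Xi'_{L(e)})$ of \cref{def:twist-cotwist}, likewise applied degree-wise and totalised. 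Now an additive functor out of $\mathcal P$ is determined, up to natural isomorphism, by the image of $P$ together with the induced ring homomorphism $A=\End_{\CatO}(P)\to\End(FP)$; so a natural isomorphism $T'_{L(e)}\isom\LSh{s}$ amounts to an isomorphism $T'_{L(e)}P\isom\LSh{s}P$ in $\Db(\CatO_0)$ that intertwines the two $A$-actions. Since $A$ is generated over the idempotents $\trivpath{e},\trivpath{s}$ by the arrows $a\colon P(e)\into P(s)$ and $b\colon P(s)\to P(e)$, this in turn comes down to choosing the isomorphism compatibly with the images of $a$ and $b$.

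The object part of the isomorphism is already in hand, so it remains to compute $T'_{L(e)}a$ and $T'_{L(e)}b$. I would extend the triple-complex computation \eqref{eq:cotwist-of-P(e)-wrt-L(e)} of $T'_{L(e)}P(e)$ and $T'_{L(e)}P(s)$ to the maps between these triple complexes induced by $a$ and $b$: using the explicit description of $f_*$ from \cref{rmk:induced-map-in-terms-of-shifted-copies} together with the bases of the $\hom^\bullet$-complexes in \eqref{eq:twisting-sl2:Hom-Pe-Le} and \eqref{eq:twisting-sl2:Hom-Le-Pe}, write $a_*$ and $b_*$ explicitly, and then perform the \emph{same} elimination of trivial summands via \cref{lem:gauss-elimination} (which is functorial). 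I expect this to yield, after elimination, exactly the maps $\LSh{s}a\colon M(s)\into P(s)$ and $\LSh{s}b\colon P(s)\onto M(s)$ of \eqref{eq:shuffling-sl2:images-of-generating-morphisms:lifts}, read through the projective resolution $\{P(e)\to P(s)\}$ of $M(s)$. Since $\Hom_{\Db}(M(s),P(s))$ and $\Hom_{\Db}(P(s),M(s))$ are one-dimensional and the images in question are nonzero, any scalar discrepancy can be absorbed into the chosen object isomorphism; thus $T'_{L(e)}P\isom\LSh{s}P$ can be made $A$-equivariant, and the resulting natural isomorphism on $\mathcal P$ extends along distinguished triangles to all of $\Db(\CatO_0)$.

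The same argument can be phrased through \cref{cor:morita-equivalence-and-induced-module} or a derived variant of it: write $\LSh{s}\isom-\otimes^{\mathbf L}_A C$ and $T'_{L(e)}\isom-\otimes^{\mathbf L}_A C'$ for bounded complexes $C,C'$ of $A$-$A$-bimodules (the second because a spherical twist is of tensor type) and reduce to a quasi-isomorphism $C\isom C'$ of bimodule complexes; the underlying complexes of right $A$-modules are already known to agree, so this reduces once more to matching the left action of $a$ and $b$.

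The main obstacle I anticipate is precisely this morphism-level computation: carrying $a_*$ and $b_*$ through the triple complexes of \eqref{eq:cotwist-of-P(e)-wrt-L(e)} and through the cascade of Gauss eliminations while tracking signs and which summand each component lands in, and then fixing the surviving scalars --- routine but fiddly. A more formal point requiring care is that agreement on the generating projectives and morphisms really does extend to a natural isomorphism of functors on $\Db(\CatO_0)$; this uses that both $T'_{L(e)}$ and $\LSh{s}$ are genuinely induced from chain-level data, hence determined by their effect on $P$ equipped with its $A$-action. As a consistency check, both functors also agree on $L(e)$: we saw $\LSh{s}L(e)=L(e)\hShift{-1}$, while \cref{rmk:twisting-E-by-itself} gives $T'_{L(e)}L(e)\qis L(e)\hShift{1-2}=L(e)\hShift{-1}$.
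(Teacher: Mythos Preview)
Your proposal is correct and follows essentially the same approach as the paper: reduce to the generating projectives $P(e),P(s)$ and the generating morphisms $a,b$, then compute $T'_{L(e)}a$ and $T'_{L(e)}b$ by carrying the induced maps $a_*,b_*$ (via \cref{rmk:induced-map-in-terms-of-shifted-copies}) through the triple complexes \eqref{eq:cotwist-of-P(e)-wrt-L(e)} and the Gauss eliminations of \cref{lem:gauss-elimination}, matching the result with \eqref{eq:shuffling-sl2:images-of-generating-morphisms:lifts}. The paper carries this computation out explicitly; your observation that the relevant Hom-spaces are one-dimensional, so any nonzero image automatically matches up to an absorbable scalar, is a legitimate shortcut that the paper does not invoke.
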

\begin{proof}
	To show that both functors are isomorphic
	we have yet to show that the morphisms $\id_{P(e)}, \id_{P(s)}, a$ and $b$ generating $\End_{\CatO}(P)$
	have isomorphic images under both functors.
	Recall the images $\Sh{s} a\colon M(s)\into P(s)$ and $\Sh{s} b\colon P(s)\onto M(s)$ from \eqref{eq:shuffling-sl2:images-of-generating-morphisms}.
	Recall from \cref{rmk:induced-map-in-terms-of-shifted-copies}
	how the maps $a\colon P(e) \to P(s)$ and $b\colon P(s) \to P(e)$ induce morphisms
	$a_*: \Xi'_{L(e)} P(e) \to \Xi'_{L(e)} P(s)$ and $b_*: \Xi'_{L(e)} P(s) \to \Xi'_{L(e)} P(s)$.
	Since
	\begin{align*}
		a^*\colon \hom^\bullet_{\CatO}(P(s), L(e)) &\longrightleftarrows \hom^\bullet_{\CatO}(P(e), L(e)) \noloc b^*\\
		\begin{aligned}[t]
			b^{(0)}, x^{(\bar{1})}, b^{(\bar{2})}  &\longmapsto 0,\\
			i^{(\bar{1})}                          &\longmapsto a^{(\bar{1})}\\
		\end{aligned}
		&\phantom{{}\longrightleftarrows{}}
		\begin{aligned}[t]
			b^{(0)}          & \longmapsfrom i^{(0)}\\
			x^{(\bar{1})}    & \longmapsfrom a^{(\bar{1})}\\
			b^{(\bar{2})}    & \longmapsfrom i^{(\bar{2})},
		\end{aligned}
	\end{align*}
	the morphisms $a_*$ and $b_*$ act
	on an element $m_{f}$ of a summand $L(e)\hShift{\deg f}_f$
	of $\Xi'_{L(e)}P(e)$ or $\Xi'_{L(e)}P(e)$ indexed by a basis element $f$ by
	\begin{align*}
		a_*: \Xi'_{L(e)} P(e) &\longrightleftarrows \Xi'_{L(e)} P(s)\noloc b_*\\
		\begin{aligned}[t]
			m_{i^{(0)}}, m_{i^{(\bar{2})}}   &\longmapsto 0\\
			m_{a^{(\bar{1})}}                &\longmapsto m_{i^{(\bar{1})}}
		\end{aligned}
		&\phantom{{}\longrightleftarrows{}}
		\begin{aligned}[t]
			m_{i^{(0)}}                      &\mapsfrom m_{b^{(0)}}\\
			m_{a^{(\bar{1})}}                &\mapsfrom m_{x^{(\bar{1})}}\\
		\end{aligned}\quad
		\begin{aligned}[t]
			0                                &\mapsfrom m_{i^{(\bar{1})}}\\
			m_{i^{(\bar{2})}}                &\mapsfrom m_{b^{(\bar{2})}}.
		\end{aligned}
	\end{align*}
	The maps $T'_{L(e)}a$ and $T'_{L(e)}b$ are therefore quasi-isomorphic to
	\[
		\begin{tikzcd}[ampersand replacement=\&, row sep=normal]
			T'_{L(e)} P(s)
				\ar[r, phantom, "\qis"]
				\ar[d, shift right, "T'_{L(e)} b"'] \&[-3.5ex]
			\Bigl\{ \textcolor{gray}{P(s)}
				\ar[r, "\ev'", color=gray] 
				\ar[d, shift right=0.56ex-.6ex, "b"', color=gray] 
				\ar[from=d, shift right=0.56ex+.6ex, "a"', color=gray] 
				\&
			\overbrace{L(e)_{b^{(0)}} \oplus L(e)\hShift{1}_{i^{(\bar{1})}} \oplus L(e)\hShift{1}_{x^{(\bar{1})}} \oplus L(e)\hShift{2}_{b^{(\bar{2})}}}^{\Xi'_{L(e)} P(s)} 
			\mathrlap{\Bigr\}}
				\ar[d, shift right, "b_*=\Mtrx{1\\&0&1\\&&&1}"'] 
			\\
			T'_{L(e)} P(e)
				\ar[r, phantom, "\qis"]
				\ar[u, shift right, "T'_{L(e)} a"']  \&
			\Bigl\{ \textcolor{gray}{P(e)}
				\ar[r, "\ev'"', color=gray] 
				\&
			\underbrace{L(e)_{i^{(0)}} \oplus L(e)\hShift{1}_{a^{(\bar{1})}} \oplus L(e)\hShift{2}_{i^{(\bar{2})}}}_{\Xi'_{L(e)} P(e)}
			\mathrlap{\Bigr\}.}
				\ar[u, shift right, "\Mtrx{0\\&1\\&0\\&&0}=a_*"'] 
		\end{tikzcd}
	\]
	Consider again the left triple complex in \eqref{eq:cotwist-of-P(e)-wrt-L(e)},
	whose black printed “front layer” represents $\Xi'_{L(e)} P(e)$,
	and use the elimination lemma \ref{lem:gauss-elimination}
	to eliminate only the bottom three identities
	$P(-)_{a^{(\bar{1}})} \to P(-)_{i^{(\bar{2})}}$.
	Similarly, a triple complex representing $T'_{L(e)} P(s)$ 
	can be obtained from \eqref{eq:twisting-sl2-by-Le:Hom-Ps-Le}.
	We obtain that $T'_{L(e)}a$ and $T'_{L(e)}b$ are represented by the maps
	\begin{equation}
		\label{eq:twisting-by-Le:action-on-a-b:triple-complexes}
		\begin{tikzcd}[cramped, ampersand replacement=\&, row sep={.7cm,between origins}, column sep={2cm,between origins}]
			\&[-0mm] \mathmakebox[\widthof{$P(e)_{i^{(0)}}$}]{P(e)_{b^{(0)}}}
				\ar[rr]
				\ar[dd, equal, cross line]
				\ar[brace', -, dd, start anchor=north west, end anchor=south west, xshift=-1ex, "T'_{L(e)} P(s) \qis{}" {name=TPs, anchor=east, xshift=-.8ex, font=\normalsize}]
				\ar[dd, equal, cross line]
			\&[-3mm]\&[-0mm] P(s)_{b^{(0)}} \ar[rr]\ar[dd, equal] 
			\&[-3mm]\&[-0mm] \mathmakebox[\widthof{$P(e)_{i^{(0)}}$}]{P(e)_{b^{(0)}}} \ar[dd, equal]
			\\
			\&\& |[gray]| P(s)
				\ar[urrr, gray]
				\ar[dr, gray, equal]
			\\
			\& P(e)_{i^{(\bar{1})}} \ar[rr]	
			\&\& P(s)_{i^{(\bar{1})}}
				\ar[from=uu, equal, cross line]
				\ar[rr] 
			\&\& P(e)_{i^{(\bar{1})}}
				\ar[brace, -, from=uu, start anchor=north east, end anchor=south east]
			\\
			P(e)_{i^{(0)}}
				\ar[from=uuur, cross line, "1"'  near end, shift right]
				\ar[to=uuur, cross line, "0"' very near end, shift right]
			\&\& P(s)_{i^{(0)}}
				\ar[from=uuur, cross line, "1"', shift right]
				\ar[to=uuur, cross line, "0"' very near end, shift right]
				\ar[rr]
			\&\& P(e)_{i^{(0)}}
				\ar[from=uuur, cross line, "1"', shift right]
				\ar[to=uuur, cross line, "0"' very near end, shift right]\\
			\phantom{P(e)_{i^{(0)}}} 
				\ar[brace', -, xshift=-1ex, from=u, start anchor=north west, end anchor=south west, "T'_{L(e)} P(e) \qis{}" {name=TPe, anchor=east, xshift=-.8ex, font=\normalsize}]
			\& |[gray]| P(e) 
				\ar[urrr, gray, equal]
				\ar[from=uuur, cross line, gray, "b"'  at end, shift right]
				\ar[to=uuur, cross line, gray, "a"'  at end, shift right]
				\ar[from=ul, to=ur, cross line]
			\&\&\& \phantom{P(e)_{i^{(0)}}} 
				\ar[brace, -, from=u, start anchor=north east, end anchor=south east]
				\ar[from=TPs, to=TPe, "T'_{L(e)}b"', shift right, xshift=-2ex]
				\ar[to=TPs, from=TPe, "T'_{L(e)}a"' near end, shift right, xshift=-2ex]
		\end{tikzcd}
	\end{equation}
	between the triple complexes representing $T'_{L(e)}P(s)$ and $T'_{L(e)}P(e)$.
	We pass to the total complexes of \eqref{eq:twisting-by-Le:action-on-a-b:triple-complexes}
	and use the elimination lemma \ref{lem:gauss-elimination} 
	to choose quasi-isomorphic replacements
	\begin{equation*}
		\scriptstyle
		\begin{tikzcd}[ampersand replacement=\&, cramped]
			\&[-.6cm]
			\& P(s) \ar[from=d, "\Mtrx{0\\0\\1}", xshift=-.5em] \ar[d, "\Mtrx{0\\-1\\1}", xshift=.5em] \ar[d, phantom, "\qis" rotate=90]
			\\
			T'_{L(e)}P(s) \ar[r, phantom, "\qis"] \ar[d, "T'_{L(e)}b"', shift right] \ar[from=d, "T'_{L(e)}a"', shift right]\&
			\smash{\Bigl\{}
			P(e)_{b^{(0)}} 
				\ar[d, "1"', shift right]
				\ar[from=d, "0"', shift right]
				\ar[r, "\Mtrx{-1\\a\\0}"] \& 
			\begin{matrix}P(e)_{i^{(\bar{1})}} ⊕{}\\{} P(s)_{b^{(0)}} ⊕ \textcolor{gray}{P(s)}\end{matrix}
				\ar[r, "\Mtrx{a&1&1\\0&b&\textcolor{gray}{b}}"] 
				\ar[d, "\Mtrx{0&1\\&0&\textcolor{gray}{b}}"', shift right]
				\ar[from=d, "\Mtrx{0\\0&0\\&\textcolor{gray}{a}}"', shift right] \&
			\begin{matrix}\phantom{{}\oplus{}} P(s)_{i^{(\bar{1})}} \\{} ⊕ P(e)_{b^{(0)}}\end{matrix}
				\ar[r, "\Mtrx{b & -1}"] 
				\ar[d, "\Mtrx{0&1}"', shift right]
				\ar[from=d, "0"', shift right] \& 
			P(e)_{i^{(\bar{1})}}
			\mathrlap{\Bigr\}}
			\\
			T'_{L(e)}P(e) \ar[r, phantom, "\qis"] \&
			\smash{\Bigl\{}
			P(e)_{i^{(0)}} \ar[r, "\Mtrx{a\\0}"] %
			\&
			P(s)_{i^{(0)}} ⊕ \textcolor{gray}{P(e)} \ar[r, "\Mtrx{b & 1}"'] \& 
			P(e)_{i^{(0)}}
			\mathrlap{\smash{\Bigr\}}}
			\\
			\&
			\& M(s)\mathrlap{.} \ar[from=u, "\Mtrx{\can&0}", xshift=.5em] \ar[u, phantom, "\qis" rotate=90] \ar[u, "\Mtrx{\can\\-\can}", xshift=-.5em]
		\end{tikzcd}
	\end{equation*}
	This shows that $T'_{L(e)} b = \LSh{s}b\colon P(s)\onto M(s)$
	and $T'_{L(e)}a = \Sh{s} a\colon M(s)\into P(s)$ indeed.
	Since all morphisms in $\Db(\CatO_0)$ are generated by $a$ and $b$,
	this proves the statement.
\end{proof}

\section{\texorpdfstring{$B_n$-actions for $\SL_3$ and $\SL_n$}{𝐵ₙ-actions for 𝔰𝔩₃ and 𝔰𝔩ₙ}}
\label{sec:sln-case}
The Lie algebra $\SL_3$ has as its Weyl group the symmetric group $S_3=\{e, s, t, st, ts, w_0\}$.
A quiver $Q_{\SL_3}$,
which has vertices indexed by $S_3$
and unique edges $w \leftrightarrows ws$ for all $w < ws$,
and a homogeneous ideal $\mathfrak a_{\SL_3}$ of $\Complex Q_{\SL_3}$
such that $\CatO_{0,\SL_3}\simeq\Mod{}[A_{\SL_3}]$ 
for the algebra $A_{\SL_3}=\Complex Q_{\SL_3}/\mathfrak a_{\SL_3}$
is provided in \autocites{stroppel:quivers,Marko:quivers}.

One sees that $Q_{\SL_2}$ is a full subquiver of $Q_{\SL_3}$
and $\mathfrak a_{\SL_3}\cap\Complex Q_{\SL_2}=\mathfrak a_{\SL_2}$.
The inclusion $A_{\SL_2}\into A_{\SL_3}$ thus induced 
gives rise to an adjoint pair of functors
\begin{equation}
	\label{eq:sl2-sl3:restriction-induction-category-O}
	\begin{aligned}
		\Res_{\SL_3}^{\SL_2}\colon\CatO_{0,\SL_3} 
			&\xtofrom{\ \dashv\ } \CatO_{0,\SL_2}\noloc \Ind_{\SL_2}^{\SL_3},
		\\
		\begin{aligned}
			P(e), P(t) &\mapsto P(e),\\
			P(s), P(st), P(ts), P(w_0) &\mapsto P(s),\\	
		\end{aligned}
		&
		\phantom{{}\xtofrom{\dashv}{}}
		\begin{aligned}
			P(e) &\mapsfrom P(e),\\
			P(s) &\mapsfrom P(s),
		\end{aligned}
	\end{aligned}
\end{equation}
which turns $\CatO_{0,\SL_2}$ into a full subcategory of $\CatO_{0,\SL_3}$.
In particular, $\End_{\CatO_{0,\SL_3}}(P(s)) \isom \Complex[x]/(x^2)$ and
$P(s)$ is $0$-spherelike also in $\CatO_{0,\SL_3}$.

\begin{caveat}
	The Calabi-Yau property from \cref{def:spherical-object:calabi-yau} is not “local”,
	in the sense that an object can lose this property in a larger ambient category.
	For instance, there are non-trivial morphisms 
	$P(s)\to P(t)$ and $P(t)\to P(s)$ in $\CatO_{0,\SL_3}$
	whose composition is zero, so $P(s)$ cannot be spherical.
	We shall present two possible remedies in this section.
\end{caveat}

\subsection{Spherical subcategories}
Consider a $k$-linear triangulated category $\mathcal T$.
\begin{definition}
	An object $E\in\mathcal T$ is said to have a \term{Serre dual} $\mathrm{S} E$
	if the contravariant functor $\Hom_\mathcal T(E,-)^*$%
	---the star stands for vector space dual---%
	is represented by $\mathrm{S} E$.
	If a Serre dual can be chosen functorially
	via an auto-equivalence $\mathrm S$,
	this functor $\mathrm S$ is said to be a \term{Serre functor} of $\mathcal T$.
\end{definition}

\begin{remark}
	The Calabi-Yau condition in \cref{def:spherical-object:calabi-yau}
	for a $d$-spherlike object $E$ to be spherical
	demands that for all $F$, the composition pairing induce an isomorphism
	$\Hom^d_{\Db}(E, F) \isom \Hom_{\Db}(F,E)^*$ or, equivalently, 
	that $E\hShift{-d}$ be a Serre dual for $E$;
	see also \autocite[lem.\ 2.15]{seidel-thomas:braid-group-actions}.
\end{remark}

Let $E\in\mathcal T$ be a $d$-spherelike (but not necessarily spherical) object
that has \emph{some} Serre dual $\mathrm SE$.
Since in particular $\End^d_\mathcal T(E)^* = \Hom_\mathcal T(E, E\hShift{-d})^*
\isom \Hom_\mathcal T(E\hShift{-d}, \mathrm SE)$,
there is a morphism $x^*\colon E\hShift{-d} \to \mathrm SE$
dual to the non-trivial endomorphism $x \in \End^d_\mathcal T(E)$.

\begin{definition}
	The \term{asphericality} of a spherelike object $E$ is $Q(E)≔\cone(x^*)$.
	Its \term{left complement} ${}^\bot Q(E) ≔ \{X\in\mathcal T \mid \Hom_{\mathcal T}(X, Q(E))=0 \}$
	is a full triangulated subcategory of $\mathcal T$.
\end{definition}
\begin{theorem}[(Hochenegger, Kalck, Ploog) {\autocite[Thm.\ 4.4, Appendix A]{HKP:spherical-subcategories}}]
	The \emph{spherical subcategory} $\Sph(E)≔{}^\bot Q(E)$ of $E$
	is the largest triangulated subcategory of $\Db(\mathcal T)$
	in which $E$ is spherical.
\end{theorem}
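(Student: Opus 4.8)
The plan is to compare the three defining conditions of \cref{def:spherelike-and-spherical} for $E$ inside the ambient category $\mathcal T$ and inside a full triangulated subcategory $\mathcal T'\subseteq\mathcal T$ containing $E$, and to isolate the only condition that is sensitive to the ambient category, namely the Calabi--Yau property \cref{def:spherical-object:calabi-yau}. First I would observe that $E$ is automatically $d$-spherelike in every such $\mathcal T'$: the finiteness demanded in \cref{def:spherical-object:finite-total-dimension} only weakens when $F$ is restricted to lie in $\mathcal T'$, and \cref{def:spherical-object:spherical-cohomology} involves $\End^*_{\mathcal T}(E)$ alone, which does not change. Hence $E$ is $d$-spherical in $\mathcal T'$ if and only if the composition pairing of \cref{def:spherical-object:calabi-yau} is non-degenerate for every $F\in\mathcal T'$; and since the Hom-spaces in question are finite-dimensional by \cref{def:spherical-object:finite-total-dimension}, a non-degenerate pairing of finite-dimensional vector spaces is the same as an isomorphism of one space with the dual of the other. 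As $\mathcal T'$ is closed under shifts, requiring this for all $F\in\mathcal T'$ and all degrees is equivalent to requiring that, for every $F\in\mathcal T'$, the natural composition map
\[
	c_F\colon\ \Hom_{\mathcal T}\bigl(F,E\hShift{-d}\bigr)\longto\Hom_{\mathcal T}(E,F)^{*},\qquad
	g\longmapsto\bigl(\varphi\mapsto\langle g\varphi\rangle_x\bigr),
\]
be an isomorphism, where $\langle-\rangle_x$ denotes the coefficient of $x$ (taken modulo $k\,\id_E$ when $d=0$); in other words, that $E\hShift{-d}$ be a Serre dual of $E$ inside $\mathcal T'$.

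The next step is to recognise $c_F$ in terms of the asphericality triangle. Using the Serre dual $\mathrm{S}E$ of $E$ in $\mathcal T$, the defining natural isomorphism $\sigma_F\colon\Hom_{\mathcal T}(F,\mathrm{S}E)\isom\Hom_{\mathcal T}(E,F)^{*}$ converts $c_F$ into post-composition along a morphism $E\hShift{-d}\to\mathrm{S}E$; by the Yoneda lemma this morphism is pinned down by its value on $\id_{E\hShift{-d}}$, which is exactly the functional dual to $x$ under the identification $\Hom_{\mathcal T}(E\hShift{-d},\mathrm{S}E)\isom\End^d_{\mathcal T}(E)^{*}$, so the morphism is precisely the map $x^{*}$ entering the definition of $Q(E)=\cone(x^{*})$. (In the edge case $d=0$ one replaces $\End^0_{\mathcal T}(E)$ by $\End^0_{\mathcal T}(E)/k\,\id_E$ throughout, in accordance with the footnote to \cref{def:spherical-object:calabi-yau}.) Therefore $c_F$ is an isomorphism if and only if post-composition with $x^{*}$ is an isomorphism $\Hom_{\mathcal T}(F,E\hShift{-d})\to\Hom_{\mathcal T}(F,\mathrm{S}E)$.

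Now I would feed the defining triangle $E\hShift{-d}\xrightarrow{\,x^{*}\,}\mathrm{S}E\to Q(E)\to E\hShift{1-d}$ through the functor $\Hom_{\mathcal T}(F,-)$. The resulting long exact sequence shows that post-composition with $x^{*}$ is an isomorphism after every homological shift of $F$ exactly when $\Hom^{*}_{\mathcal T}(F,Q(E))=0$, i.e.\ $F\in{}^\bot Q(E)=\Sph(E)$. Since a triangulated subcategory $\mathcal T'\ni E$ is shift-closed, combining this with the previous two steps gives, for every such $\mathcal T'$, the equivalence
\[
	E\text{ is $d$-spherical in }\mathcal T'\quad\Longleftrightarrow\quad\mathcal T'\subseteq\Sph(E).
\]
It remains to note that $\Sph(E)={}^\bot Q(E)$ is itself a full triangulated subcategory of $\mathcal T$ (as already recorded in its definition) and that it contains $E$: applying $\Hom^{*}_{\mathcal T}(E,-)$ to the same triangle, post-composition with $x^{*}$ is an isomorphism $\Hom^{*}_{\mathcal T}(E,E\hShift{-d})\to\Hom^{*}_{\mathcal T}(E,\mathrm{S}E)$ because $\Hom^{*}_{\mathcal T}(E,E)\isom k[x]/(x^2)$ and $\Hom^{*}_{\mathcal T}(E,\mathrm{S}E)$ is its graded dual, with $x^{*}$ matching the two one-dimensional summands (equivalently: $E$, being $d$-spherelike, already satisfies the Calabi--Yau pairing against $F=E$). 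Hence $\Hom^{*}_{\mathcal T}(E,Q(E))=0$ and $E\in\Sph(E)$. Taking $\mathcal T'=\Sph(E)$ in the equivalence shows that $E$ is spherical in $\Sph(E)$, and reading the equivalence in the other direction shows that every triangulated subcategory in which $E$ is spherical is contained in $\Sph(E)$; thus $\Sph(E)$ is the largest such, which is the assertion.

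The step I expect to be the main obstacle is the identification of the pairing map $c_F$ with post-composition by $x^{*}$: it requires carefully unwinding the Serre-duality isomorphism together with the definition of $x^{*}$ via Yoneda, and a separate (if routine) treatment of the degree-zero case, where the line $k\,\id_E\subseteq\End^0(E)$ must be split off before either the Calabi--Yau pairing or the dual of $x$ is well-defined. Everything else --- the inheritance of \cref{def:spherical-object:finite-total-dimension} and \cref{def:spherical-object:spherical-cohomology} by full subcategories, the dictionary between non-degenerate pairings of finite-dimensional spaces and isomorphisms, and the long exact sequence --- is formal.
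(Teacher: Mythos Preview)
The paper does not prove this theorem; it is quoted from \autocite{HKP:spherical-subcategories} and used as a black box, so there is no in-paper argument to compare against. Your proof is correct and is essentially the argument of Hochenegger--Kalck--Ploog: reduce sphericality in a full triangulated subcategory $\mathcal T'$ to the Calabi--Yau condition, recognise the latter via Serre duality and Yoneda as post-composition with $x^*$ being an isomorphism, and then read off from the long exact sequence of the triangle $E\hShift{-d}\xrightarrow{x^*}\mathrm{S}E\to Q(E)$ that this holds for $F$ precisely when $F\in{}^\bot Q(E)$.

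One small remark: your verification that $E\in\Sph(E)$ (``$x^*$ matching the two one-dimensional summands'') is correct but could be stated more precisely. For $d\neq 0$, both $\Hom^*(E,E\hShift{-d})$ and $\Hom^*(E,\mathrm{S}E)$ are concentrated in degrees $0$ and $-d$, each one-dimensional, and one checks that $x^*\circ x$ and $x^*\circ\id_{E\hShift{-d}}$ are nonzero by unwinding Serre duality as you do for general $F$. For $d=0$ the same computation shows that $\id\mapsto x^*$ and $x\mapsto x^*\circ x$ give an isomorphism $\End(E)\to\End(E)^*$; here it matters that $x^*$ is the basis vector dual to $x$ with respect to the basis $\{\id,x\}$, so that $x^*\circ x$ corresponds to the functional $\id^*$ rather than zero. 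You flag this case as needing care, and it does, but it goes through.
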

\begin{example}
	For $\mathfrak g$ a semisimple complex Lie algebra and $\lambda$ a regular weight
	(for instance, $\lambda=0$),
	the auto-equivalence $\mathrm S\coloneqq\LSh{w_0}^2$ is a Serre functor of $\Db(\CatO_\lambda)$
	\autocite[Prop.\ 4.1]{MS:Serre-functors}.
\end{example}
\begin{proposition}
	The $0$-spherelike module $P(s)\in\Db(\CatO_{0,\SL_3})$ has Serre dual $\mathrm SP(s)\isom P(s)^\vee$.
\end{proposition}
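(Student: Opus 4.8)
The plan is to identify the Serre dual of $P(s)$ directly from its projectivity together with the BGG duality $(-)^\vee$ of $\CatO_0$, rather than computing the Serre functor $\LSh{w_0}^2$. Recall from \cref{sec:gradings,sec:quivers} that $\CatO_{0,\SL_3}\simeq\Mod{}[A]$ with $P(w)$ corresponding to the right ideal $\trivpath wA$. First I would record an elementary module-theoretic identity: evaluation at the idempotent $\trivpath s$ yields a natural isomorphism $\Hom_A(\trivpath sA, M)\isom M\trivpath s$, and hence, writing $D\coloneqq\Hom_\Complex(-,\Complex)$ and using $M\trivpath s\isom M\otimes_A A\trivpath s$ together with hom--tensor adjunction,
\[
	\Hom_{\CatO_0}(P(s), M)^*\;\isom\;(M\trivpath s)^*\;\isom\;\Hom_\Complex(M\otimes_A A\trivpath s,\Complex)\;\isom\;\Hom_A\bigl(M, D(A\trivpath s)\bigr)
\]
naturally in $M\in\CatO_0$. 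The right $A$-module $D(A\trivpath s)$ is injective and indecomposable, with $\soc D(A\trivpath s)=D(\operatorname{hd} A\trivpath s)\isom L(s)$; hence it is the injective hull $I(s)$ of $L(s)$ in $\CatO_0$.

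Next I would identify $I(s)$ with $P(s)^\vee$. The duality $(-)^\vee$ of $\CatO_0$ (see \autocite[\S 3.2]{Humphreys:CatO}) is a contravariant exact auto-equivalence fixing the simple modules, $L(w)^\vee\isom L(w)$; therefore it carries the projective cover $P(s)$ of $L(s)$ to the injective hull of $L(s)^\vee=L(s)$, i.e.\ $P(s)^\vee\isom I(s)$. Substituting this into the display above gives a natural isomorphism $\Hom_{\CatO_0}(P(s), M)^*\isom\Hom_{\CatO_0}(M, P(s)^\vee)$ for $M\in\CatO_0$.

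It remains to lift this isomorphism from $\CatO_0$ to all of $\Db(\CatO_0)$. Since $P(s)$ is projective and $\Hom_{\CatO_0}(P(s),-)$ is exact, one has $\Hom_{\Db(\CatO_0)}(P(s), F)\isom\Hom_{\CatO_0}(P(s), H^0 F)$ naturally in $F$; dually, since $P(s)^\vee=I(s)$ is injective, $\Hom_{\Db(\CatO_0)}(F, P(s)^\vee)\isom\Hom_{\CatO_0}(H^0 F, P(s)^\vee)$. Applying the module-level isomorphism with $M=H^0 F$ then produces a natural isomorphism $\Hom_{\Db(\CatO_0)}(P(s), F)^*\isom\Hom_{\Db(\CatO_0)}(F, P(s)^\vee)$, so that $P(s)^\vee$ represents $\Hom_{\Db}(P(s),-)^*$ and is therefore a Serre dual of $P(s)$; by uniqueness of the Serre dual this also reproves $\LSh{w_0}^2 P(s)\isom P(s)^\vee$. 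The argument is formal once the two elementary identities are available, and nothing is special to $\SL_3$ — the analogous statement holds for $P(w)$ in any block. The only point demanding a little care is that the Serre-dual property must be checked for \emph{every} object $F$, which is exactly what the two reductions via $H^0$ accomplish; I do not expect a genuine obstacle here.
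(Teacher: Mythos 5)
Your argument is correct, and it takes a genuinely different route than the paper. The paper proves the proposition by brute force: it applies the six factors of the Serre functor $\mathrm S=\LSh{w_0}^2=\LSh{s}\LSh{t}\LSh{s}\LSh{t}\LSh{s}\LSh{t}$ one at a time, tracking the standard filtration layers, and recognises the resulting composition series as that of $P(s)^\vee$. You instead exhibit $P(s)^\vee$ directly as the representing object of $\Hom_{\Db}(P(s),-)^*$, via the chain $\Hom_{\CatO_0}(P(s),M)^*\isom(M\trivpath s)^*\isom\Hom_A(M,D(A\trivpath s))$, identifying $D(A\trivpath s)$ with $I(s)\isom P(s)^\vee$, and then lifting to $\Db$ using that $P(s)$ is projective and $P(s)^\vee$ injective, so that Hom-complexes compute $\Hom$ out of $H^0$; combined with uniqueness of Serre duals this indeed forces $\LSh{w_0}^2 P(s)\isom P(s)^\vee$. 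Your approach buys elementarity and generality — it works verbatim for every indecomposable projective in every block, with no $\SL_3$-specific input — whereas the paper's calculation has a side benefit: it produces the explicit (graded) composition series of $P(s)^\vee$ in \cref{eq:composition-series-of-P(s)-dual}, which is used in the very next paragraph to write down $x^*$ in \cref{eq:P(s)-x-dual-made-explicit} and to compute the asphericality $Q(P(s))$. If one adopted your proof one would still need to recover that composition series separately (though it follows easily from $[P(s)^\vee : L(w)]=[P(s):L(w)]$ plus the grading). One small point worth making explicit: since $\mathrm S$ is shown to be a Serre functor in the preceding example, and any two Serre duals of a fixed object are canonically isomorphic, your representability argument does establish $\mathrm SP(s)\isom P(s)^\vee$ and not merely that $P(s)^\vee$ is \emph{a} Serre dual; it is worth saying so since the statement names $\mathrm SP(s)$ explicitly.
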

\begin{proof}
	For this proof, we take the graded structure on $\CatO_0^{\mathbf Z}$ into account.
	Recall from \cref{def:graded-translation}
	that $\Theta_s^2 ≅ \Theta_s⟨-1⟩⊕\Theta_s⟨1⟩$, and hence $\Sh{s}\Theta_s\isom\Theta_s⟨-1⟩$.
	Recall from \cref{eqn:translation-defining-ses,eq:shuffling-on-Vermas} 
	that for $w<ws$, the functor $\Sh{s}$ maps standard factors $M(w)$ to $M(ws)$
	and $M(ws)$ to $\begin{psmallmatrix}M(ws)\\M(w)/M(ws)\end{psmallmatrix}⟨-1⟩$.
	Applying the factors of $\mathrm S = \LSh{w_0}^2$ successively
	shows that $P(s)$ is mapped under $\mathrm S$ to
	\begin{align}
		\mathmakebox[1em][l]{P(s) = \Theta_s M(s)}
		\notag\\
		&\xmapsto{\Sh{s}} 
			P(s)\langle -1\rangle = \begin{psmallmatrix}M(s)\\M(e)\end{psmallmatrix}\langle -1\rangle
		\notag\\
		&\xmapsto{\Sh{t}} 
			\begin{psmallmatrix}M(ts)\\M(t)\end{psmallmatrix}\langle -1\rangle 
		\notag\\
		&\xmapsto{\Sh{s}} 
			\begin{psmallmatrix}M(w_0)\\M(ts)\end{psmallmatrix}\langle -1\rangle
			= \Theta_t M(ts)\langle -1\rangle 
		\notag\\
		&\xmapsto{\Sh{t}} 
			\begin{psmallmatrix}M(w_0)\\M(ts)\end{psmallmatrix}\langle -2\rangle 
			&
		\notag\\
		&\xmapsto{\Sh{s}}
			\begin{psmallmatrix}
					           & M(w_0)\\
					M(ts)      & M(st)/M(w_0)\\
					M(t)/M(ts)
			\end{psmallmatrix}⟨-3⟩ 
		\notag\\
		&\xmapsto{\Sh{t}}
			\arraycolsep=3pt
			\left(
				\mkern-8mu
				\begin{smallmatrix}
					&&& \begin{smallmatrix} M(w_0)\\M(st)/M(w_0) \end{smallmatrix}
					\\[-0.9em]
					&\begin{smallmatrix}\phantom{()} \\ M(w_0) \end{smallmatrix}
					& \left( \begin{smallmatrix} M(st)\\M(s)/(st) \end{smallmatrix} \middle/ \begin{smallmatrix} M(w_0)\\M(ts)/M(w_0) \end{smallmatrix} \right) \\[-1.225em]
					\\[0.125em]
					\left(
						\begin{smallmatrix} M(t)\\M(e)/M(t) \end{smallmatrix} \middle/ \begin{smallmatrix} \phantom{()}\\ M(w_0) \end{smallmatrix}
				 	\right)
				\end{smallmatrix}
				\mkern-8mu
			\right)
			\langle -4\rangle
		\notag\\
		&\mathmakebox[\widthof{${}\xmapsto{\Sh{t}}{}$}][r]{{}={}} 
			\begin{psmallmatrix}
				& L(w_0)\\\
				& L(st)\quad L(ts)\\
				 L(w_0) &  L(s)\quad L(t)\\
				L(st) \quad L(ts) & L(e)\\
				L(s)
			\end{psmallmatrix}
			\langle -4\rangle,
		\label{eq:composition-series-of-P(s)-dual}
	\end{align}
	which is precisely the composition series
	of the dual module $P(s)^\vee$.
\end{proof}

The space $\Hom(P(s), P(s)^\vee) = \Hom(P(s), \mathrm{S}P(s)) \cong {\End(P(s))}^*$
is spanned by the morphisms $\id^*$ and $x^*$
dual to $\id, x \in \End_\CatO(P(s))$.
According to the grading of $P(s)$ in $\CatO_0^\mathbf{Z}$,
the maps $\id$ and $x$ respectively have degrees $0$ and $2$,
so $\id^*$ and $x^*$ are of degrees $0$ and $-2$.
In terms of composition series of $P(s)$ and $P(s)^\vee$, the maps $\id^*$ and $x^*$ are
\begin{equation}
	\label{eq:P(s)-x-dual-made-explicit}
	\begin{tikzpicture}[baseline=(Ps-1-5.base),
		,
		every matrix/.append style={
			column sep={1.4em,between origins},
			anchor={base east},
			row sep={1em,between origins},
		},
		every edge quotes/.append style={
			execute at begin node={$},
			execute at end node={$}
		}
	]
		\matrix(Ps)[
			matrix of math nodes, 
			matrix anchor=Ps-1-5.base,
			font={\scriptsize}
		]{
			&&&&L(s) &\\
			& L(e) && L(st) && L(ts)\\
			L(s) && |[gray]| L(t) &&|[gray]|L(w_0)\\
			|[gray]| L(st) && |[gray]| L(ts)\\
			&|[gray]| L(w_0)\\
		};
		\node[braced box'={Ps-label}, fit=(Ps-1-5) (Ps-3-1) (Ps-5-2)] {};
		\node[left=-.25cm of Ps-label] {$P(s)={}$};
		\node [yshift=2pt, braced box''={T}, fit=(Ps-1-5) (Ps-3-1) (Ps-2-6)] {};
		\matrix(SPs)[
			right=9em of Ps-1-5.base,
			matrix of math nodes, 
			matrix anchor=SPs-5-2.base, 
			font={\scriptsize}
		]{
			&&&& |[gray]|L(w_0) &\\
			&&& |[gray]|L(st) && |[gray]|L(ts)\\
			& |[gray]|L(w_0) && L(s) && |[gray]|L(t)\\
			L(st) && L(ts) && L(e)\\
			& L(s)\\
		};
		\node[braced box={SPs-label}, fit=(SPs-1-5) (SPs-3-6) (SPs-5-2)] {};
		\node[right=-.25cm of SPs-label] {$ {} = \mathrm{S}P(s) = P(s)^\vee$.};
		\node [braced box'''={T'}, fit=(SPs-5-2) (SPs-4-1) (SPs-4-5)] {};

		\draw[limit bb, ->] (T) to[out=80, in=-100, looseness=2, "x^*" at start] (T');
		\draw[cross line, ->] (Ps-1-5) to["\id^*"' {pos=.4, fill=white}] (SPs-5-2);
		\matrix(G)[matrix of math nodes,matrix anchor=G-5-1.base,xshift=-2.5cm] at  (Ps-1-5.base -| Ps-3-1) {
			-4\\-3\\-2\\-1\\\phantom{-}0\\\phantom{-}1\\\phantom{-}2\\\phantom{-}3\\\phantom{-}4\\
		};
	\end{tikzpicture}
\end{equation}
The grading of the modules is as indicated on the left,
such that the two factors $L(s)$ connected by $\id^*$ are in degree zero
and $x^*$ is a degree $-2$-map indeed.
The grey composition factors belong to the kernel and cokernel of $x^*$, respectively.
Using this explicit description for $x^*$, we can prove the following:

\begin{theorem}
	The inclusion $\Db(\CatO_{0,\SL_2})\into\Db(\CatO_{0,\SL_3})$
	from \cref{eq:sl2-sl3:restriction-induction-category-O}
	factors through $\Sph(P(s))$.
\end{theorem}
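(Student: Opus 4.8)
The plan is to reduce the assertion to a single $\Hom$-vanishing and then to a short computation with composition factors. Write $\iota\colon\Db(\CatO_{0,\SL_2})\to\Db(\CatO_{0,\SL_3})$ for the inclusion of \cref{eq:sl2-sl3:restriction-induction-category-O}; it carries the indecomposable projectives $P(e)$ and $P(s)$ of $\CatO_{0,\SL_2}$ to the indecomposable projectives $P(e)$ and $P(s)$ of $\CatO_{0,\SL_3}$. Since $\Db(\CatO_{0,\SL_2})$ is generated as a triangulated category by $P(e)$ and $P(s)$, and $\Sph(P(s))={}^\bot Q(P(s))$ is a full triangulated subcategory, $\iota$ factors through $\Sph(P(s))$ as soon as both $P(e)$ and $P(s)$ lie in $\Sph(P(s))$. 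Now $P(s)$ lies in $\Sph(P(s))$: by the theorem of Hochenegger, Kalck and Ploog this is the largest triangulated subcategory of $\Db(\CatO_{0,\SL_3})$ in which $P(s)$ is spherical, and in particular it contains $P(s)$. So everything reduces to showing $P(e)\in{}^\bot Q(P(s))$, \ie $\Hom^*_{\Db(\CatO_{0,\SL_3})}(P(e),Q(P(s)))=0$.

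To see this I would apply $\Hom^*_{\Db(\CatO_{0,\SL_3})}(P(e),-)$ to the defining triangle $P(s)\xrightarrow{\,x^*\,}P(s)^\vee\to Q(P(s))\to P(s)\hShift{1}$. Because $P(e)$ is a projective module, $\Hom^j_{\Db}(P(e),N)=0$ for $j\neq 0$ and $\Hom^0_{\Db}(P(e),N)=\Hom_{\CatO}(P(e),N)$ for every module $N$. Hence the long exact sequence of the triangle collapses: $\Hom^j_{\Db}(P(e),Q(P(s)))$ vanishes for $j\notin\{0,1\}$, and in degrees $0$ and $1$ it is respectively the cokernel and the kernel of the map $\Hom_{\CatO}(P(e),P(s))\xrightarrow{\,x^*\circ-\,}\Hom_{\CatO}(P(e),P(s)^\vee)$. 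Thus $P(e)\in\Sph(P(s))$ is equivalent to composition with $x^*$ being an isomorphism here.

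The last point I would settle by a dimension count together with the explicit shape of $x^*$. Using $\dim\Hom_{\CatO}(P(e),M)=[M:L(e)]$, the fact that $P(s)^\vee$ has the same composition factors as $P(s)$, and the composition series of $P(s)$ displayed in \cref{eq:P(s)-x-dual-made-explicit}, in which $L(e)$ occurs exactly once, both $\Hom_{\CatO}(P(e),P(s))$ and $\Hom_{\CatO}(P(e),P(s)^\vee)$ are one\-/dimensional, so it is enough to see that composition with $x^*$ does not vanish. Let $0\neq\phi\colon P(e)\to P(s)$. Its image is a nonzero quotient of the projective cover $P(e)$ of $L(e)$, hence has simple head $L(e)$, so $L(e)$ is a composition factor of $\im\phi$. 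But $\ker x^*$ is the submodule of $P(s)$ whose composition factors are the grey ones in \cref{eq:P(s)-x-dual-made-explicit}, and $L(e)$ is not among them. Therefore $\im\phi\not\subseteq\ker x^*$, so $x^*\circ\phi\neq 0$; hence composition with $x^*$ is an isomorphism, $P(e)\in\Sph(P(s))$, and by the first paragraph the theorem follows.

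I expect the only genuine obstacle to be this final non\-/vanishing $x^*\circ\phi\neq 0$: it rests precisely on the explicit description of $x^*$ in \cref{eq:P(s)-x-dual-made-explicit}, namely that $L(e)$ does not occur among the composition factors of $\ker x^*$. The remaining ingredients --- the reduction to the generators $P(e),P(s)$, the collapse of the long exact sequence, and the dimension count --- are routine.
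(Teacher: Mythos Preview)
Your proof is correct and takes a cleaner route than the paper's. The paper first produces a projective resolution $P(s)^\vee\qis\{P(s)\to P(w_0)\langle-2\rangle\to P(w_0)\langle-4\rangle\}$, writes $Q(P(s))$ as the resulting four-term complex of projectives, and then checks by inspection of the composition series of $P(w_0)\langle-4\rangle$ that every map $P(e)\to P(w_0)\langle-4\rangle$ and $P(s)\to P(w_0)\langle-4\rangle$ factors through the incoming differential and is hence null-homotopic. You bypass all of this: for $P(s)$ you invoke the Hochenegger--Kalck--Ploog theorem directly, and for $P(e)$ you apply $\Hom(P(e),-)$ to the defining triangle of $Q(P(s))$, use projectivity of $P(e)$ to collapse the long exact sequence to a single map $x^*\circ-$ between one-dimensional spaces, and read off its non-vanishing from the composition factors of $\ker x^*$ already displayed in \cref{eq:P(s)-x-dual-made-explicit}. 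Your approach is more economical and isolates exactly what is needed; the paper's computation has the side benefit of also exhibiting that $P(t),P(st),P(ts),P(w_0)\notin\Sph(P(s))$, which sharpens the picture but is not required for the theorem as stated. One cosmetic point: with the paper's convention $\Hom^j(X,Y)=\Hom(X[j],Y)$, the two possibly nonzero degrees come out as $-1$ and $0$ rather than $0$ and $1$, but this does not affect your argument.
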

\begin{proof}
	From the composition series \cref{eq:composition-series-of-P(s)-dual} of $P(s)^\vee$,
	we can derive that $P(s)^\vee$ has a projective resolution
	$\{P(s)\to P(w_0)⟨-2⟩\to P(w_0)⟨-4⟩\}$.
	Expressiong the map $x^*$ from \cref{eq:P(s)-x-dual-made-explicit}
	in terms of the projective resolution of $P(s)^\vee$ yields
	that the asphericality $Q(P(s)) = \cone(x^*)$ of $P(s)$ is the total complex
	\begin{equation}
		\label{eq:P(s):asphericality}
		Q \qis
		\left\{
			\begin{tikzcd}[nodes={inner sep={2pt}},sep=small, slightly cramped, baseline=(x.base)]
			&& P(s)⟨-2⟩\dar["\strut" name=x]\\
			P(s) \rar & P(w_0)⟨-2⟩ \rar & P(w_0)⟨-4⟩
			\end{tikzcd}
		\right\}
	\end{equation}
	with the canonical inclusions and
	with the bottom right $P(w_0)⟨-4⟩$ is in homological degree $0$.
	We claim that 
	\[
		\Hom_{\Db(\CatO_{0,\SL_3})}(P(w), Q)
		\begin{cases}
			= 0 & \text{if $w\in\{e, s\}$,}\\
			\neq 0 & \text{if $w\in\{t, st, ts, w_0\}$.}
		\end{cases}
	\]
	Consider the composition series of the projective modules 
	involved in \cref{eq:P(s):asphericality}.
	The $P(w_0)⟨-4⟩$ in degree $0$ has composition series
	\begin{equation}
		\label{eq:Ps-dual:resolution:composition-factors-of-Pw0}
		P(w_0)⟨-4⟩ = 
		\begin{psmallmatrix}
			& L(w_0)\\
			& L(st)\quad L(ts)\\
			 \textcolor{gray}{L(s)} & \textcolor{gray}{L(w_0)}\quad L(w_0) & L(t)\\
			 \textcolor{gray}{L(st)}\quad  \textcolor{gray}{L(ts)} &  \textcolor{gray}{L(e)} & \textcolor{gray}{L(st)}\quad \textcolor{gray}{L(ts)}\\
			 \textcolor{gray}{L(w_0)} &  \textcolor{gray}{L(s)}\quad  \textcolor{gray}{L(t)} & \textcolor{gray}{L(w_0)}\\
			&  \textcolor{gray}{L(st)}\quad  \textcolor{gray}{L(ts)}\\
			&  \textcolor{gray}{L(w_0)}
		\end{psmallmatrix}⟨-4⟩;
	\end{equation}
	the gray factors are a composition series of the image of
	\[(P(s)⟨-2⟩\oplus P(w_0)⟨-2⟩ \to P(w_0)⟨-4⟩,\]
	which is the last non-trivial map of the total complex \cref{eq:P(s):asphericality}.
	Every map $P(e)⟨-⟩ \to P(w_0)⟨-4⟩$ and $P(s)⟨-⟩ \to P(w_0)⟨-4⟩$ factors through $P(s)⟨-2⟩\oplus P(w_0)⟨-2⟩$
	and hence is null-homotopic.
	This shows that $\Hom_{\Db(\CatO(\SL_3))}(P(w), Q) = 0$ if $w\in\{e, s\}$.
	
	For $w \in \{t, st, ts, w_0\}$ on the other hand,
	the black composition  factors $L(w)$ in \cref{eq:Ps-dual:resolution:composition-factors-of-Pw0}  
	generate images of non-zero morphisms $P(w)⟨-⟩ \to P(w_0)⟨-2⟩$
	that cannot be factored through $P(s)⟨-2⟩\oplus P(w_0)⟨-2⟩$
	and thus are not null-homotopic.
	These morphisms therefore represent non-trivial morphisms
	$P(w)⟨-⟩ \to Q$ in $\Db(\CatO_{0,\SL_3})$
	and thus show that $\Hom_{\Db(\CatO(\SL_3))}(P(w), Q) \neq 0$ if $w \in \{t, st, ts, w_0\}$.
	
	We see that $\Sph(P(s))$  of $\Db(\CatO_{0,\SL_3})$
	contains the triangulated subcategory generated by $P(s)$ and $P(e)$,
	which proves the claim.
\end{proof}

\subsection{Maximal parabolic subalgebras}
We shall now address parabolic subalgebras $\p$ of $\SL_3$ (\cref{sec:parabolic-subalgebras})
as another remedy for the failure of the Calabi-Yau property
of $P(s)$ in $\CatO_{0,\SL_3}$.
Consider the category $\CatO^\p_0$
corresponding to the parabolic subgroup $W_{\p} = ⟨t⟩ \isom S_1 \times S_2$ of $W = S_3 = ⟨s,t⟩$.
The minimal-length representatives of cosets in $W/W_\p$ are $W^\p=\{e, s, st\}$.
There is an equivalence $\CatO^\p_0\simeq\Mod{A_\p}$
for the path algebra quotient
\begin{equation}
	\label{eq:quiver-sl3}
	A_\p≔A_{\SL_3}/(\trivpath{t}, \trivpath{ts}, \trivpath{w_0}) = \Complex\bigl[e \rightleftarrows s \rightleftarrows st\bigr]\Bigm/ 
		\Bigl(
		\begin{scriptaligned}
			e\from s\from e &= 0, & 	e\from s\from st &= 0,\\[-0.5em]
			st\from s\from e &= 0, &	s\from e \from s &= s\from st\from s
		\end{scriptaligned}
	\Bigr)
\end{equation}
of the path algebra $A_{\SL_3}$.
The parabolic Verma modules and projectives have the following composition series,
whose factors now are simple $A_\p$-modules:
\begin{equation}
	\label{eq:parabolic-sl3-composition-factors}
	\begin{array}{ccc|cc@{{}={}}cc@{{}={}}c}
		\toprule
		M^𝔭(e) &
		M^𝔭(s)  &
		M^𝔭(st) &
		P^𝔭(e) & 
		\multicolumn{2}{c}{P^𝔭(s)} &
		\multicolumn{2}{c}{P^𝔭(st)}\\
		\midrule
		\begin{smallmatrix}\vphantom{L(e)}\\L(e)\\L(s)\end{smallmatrix} &
		\begin{smallmatrix}\vphantom{L(e)}\\L(s)\\L(st)\end{smallmatrix} &
		\begin{smallmatrix}\vphantom{L(e)}\\\vphantom{L(e)}\\L(st)\end{smallmatrix} &
		\begin{smallmatrix}\vphantom{L(e)}\\M(e)\end{smallmatrix} &
		\begin{smallmatrix}M(s)\\M(e)\end{smallmatrix} &
		 \begin{smallmatrix}L(s)\\L(st)\quad L(e)\\L(s)\end{smallmatrix} &
		\begin{smallmatrix}M(st)\\M(s)\end{smallmatrix} &
		 \begin{smallmatrix}L(st)\\L(s)\\L(st)\end{smallmatrix}\\
		\bottomrule
	\end{array}
\end{equation}
According to \cref{rmk:parabolic-translation-and-shuffling},
the defining short exact sequences \cref{eqn:translation-defining-ses} of $\Theta_s$ and $\LSh{s}$
restrict to sequences \cref{eq:ses-parabolic-wall-crossing} in $\CatO_0^\p$
so the indecomposable projective modules
have the following images under $\Theta_s$ and $\LSh{s}$:
\begin{equation}
	\label{eq:parabolc-sl3:images-under-translation-and-shuffling}
	\begin{array}{c|cr@{}>{{}}lcr@{}>{{}}l}
		\toprule
		M                 & \Theta_s M                        &              \multicolumn{2}{c}{\LSh{s} M}               & \Theta_t M             &                                          \multicolumn{2}{c}{\LSh{t} M}                                           \\ \midrule
		P^\p(e)  & P^\p(s)                  & \{P^\p(e)  & \to \degZero{P^\p(s)}\}   & 0                      & \mathmakebox[\widthof{$P^\p(e)$}][c]{P^\p(e)\hShift{-1}} &                                     \\
		P^\p(s)  & P^\p(s)⊕P^\p(s) &                     & \phantom{{}\to{}} P^\p(s) & P^\p(st)      & \{P^\p(s)                                                         & \to \degZero{P^\p(st)}\}   \\
		P^\p(st) & P^\p(s)                  & \{P^\p(st) & \to \degZero{P^\p(s)}\}   & P^\p(st)^{⊕2} &                                                                            & \phantom{{}\to{}} P^\p(st) \\ \bottomrule
	\end{array}
\end{equation}

\begin{remark}
	\label{rmk:inclusion-of-parabolic-subalgebradoesnt-preserve-projectives}
	A module $M$ is $\Sh{w}$-acyclic if and only if $\LSh{w}$
	is quasi-isomorphic to a complex concentrated in degree zero.
	The results form \cref{eq:parabolc-sl3:images-under-translation-and-shuffling}
	therefore are examples for \cref{caveat:parabolic-projectives-not-projective}:
	the objects $P^\p(-)$, 
	albeit projective in the category $\CatO_0^\p$,
	are not $\Sh{s}$-acyclic and hence in particular not projective in $\CatO_0$.
\end{remark}
\begin{lemma}
	The set $\{P^\p(s), P^\p(st)\}$
	is an $\mathrm{A}_2$-collection of $0$-spherical objects in $\Db(\CatO^\p_0)$.
\end{lemma}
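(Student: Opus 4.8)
The plan is to carry out everything inside the path-algebra presentation $A_\p$ of \cref{eq:quiver-sl3}, where $\CatO^\p_0\simeq\Mod{A_\p}$ and $P^\p(e),P^\p(s),P^\p(st)$ are the projective right ideals $\trivpath{e}A_\p,\trivpath{s}A_\p,\trivpath{st}A_\p$, and to unwind the statement into the three conditions of \cref{def:spherelike-and-spherical} with $d=0$ together with the $\mathrm A_2$-condition on $\dim\Hom^*_{\Db}$. Condition (S1) is automatic: $\CatO^\p_0$ has finite global dimension and only finitely many simple objects, all finite-dimensional, so all graded $\Hom$-spaces in $\Db(\CatO^\p_0)$ are of finite total dimension. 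The decisive simplification is that $P^\p(s),P^\p(st)$ (and all the $P^\p(w)$) are projective in $\CatO^\p_0$, so $\Hom^*_{\Db}(P^\p(v),P^\p(w))$ is concentrated in triangulated degree $0$ and equals $\Hom_{\CatO^\p_0}(P^\p(v),P^\p(w))\isom\trivpath{w}A_\p\trivpath{v}$, whose dimension is the composition multiplicity $[P^\p(w):L(v)]$ read off \cref{eq:parabolic-sl3-composition-factors}.

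I would first dispose of (S2) and the $\mathrm A_2$-condition. From \cref{eq:parabolic-sl3-composition-factors}, $[P^\p(s):L(s)]=[P^\p(st):L(st)]=2$, so $\End_{\CatO^\p_0}(P^\p(w))=\trivpath{w}A_\p\trivpath{w}$ is $2$-dimensional for $w\in\{s,st\}$; it is local with $1$-dimensional radical spanned by the unique non-identity endomorphism $x$ — the class of the length-$2$ closed path at the vertex $w$ in \cref{eq:quiver-sl3} — which factors as $P^\p(w)\onto L(w)\isom\soc P^\p(w)\into P^\p(w)$. Since $x$ annihilates $\soc P^\p(w)$ we get $x^2=0$, hence $\End_{\CatO^\p_0}(P^\p(w))\isom\Complex[x]/(x^2)$ with $x$ of triangulated degree $0$; combined with the previous paragraph this is exactly (S2) for $d=0$, so each of $P^\p(s),P^\p(st)$ is $0$-spherelike. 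The $\mathrm A_2$-condition is then immediate: $\dim\Hom^*_{\Db}(P^\p(s),P^\p(st))=[P^\p(st):L(s)]=1$ and $\dim\Hom^*_{\Db}(P^\p(st),P^\p(s))=[P^\p(s):L(st)]=1$, and there is no further pair to inspect.

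The real work is the Calabi-Yau property (S3) — the point where, as already warned, sphericality can be lost in a larger ambient category and which the passage to the maximal parabolic is meant to repair. I would invoke the first Lemma of \cref{sec:spherical-objects-sl2}: it suffices to show that for $E\in\{P^\p(s),P^\p(st)\}$ and each indecomposable projective $P^\p(w)$, $w\in\{e,s,st\}$, the composition pairing $\Hom^i_{\Db}(P^\p(w),E)\otimes\Hom^{-i}_{\Db}(E,P^\p(w))\to\Hom^0_{\Db}(E,E)/\Complex\id\isom\Complex$ is non-degenerate (in the $d=0$ sense of the footnote to \cref{def:spherical-object:calabi-yau}). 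Projectivity of $P^\p(w)$ and $E$ kills every component except $i=0$, where the pairing sends $(f,g)$ with $f\colon P^\p(w)\to E$, $g\colon E\to P^\p(w)$ to the class of $f\circ g$ in $\End_{\CatO^\p_0}(E)/\Complex\id$. This is now a short finite check: (i) if $E=P^\p(st)$ and $w=e$ both $\Hom$-spaces vanish, since $L(e)$ does not occur in $P^\p(st)$ and $L(st)$ does not occur in $P^\p(e)$, so the pairing is vacuously non-degenerate; (ii) when $E=P^\p(w)$ the pairing is just multiplication $\End(E)\times\End(E)\to\End(E)/\Complex\id$, which is non-degenerate as one checks directly from $\id\cdot x=x\notin\Complex\id$; (iii) in the remaining cross cases, each of which pairs two $1$-dimensional spaces, one checks that for nonzero $f,g$ the composite $f\circ g$ is nonzero with image $\soc E$, hence a nonzero multiple of $x$. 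The bookkeeping in (iii) — a nonzero map out of the local module $P^\p(w)$ has image the unique submodule of its target whose head is $L(w)$, and composing two such along $P^\p(w)\to E\to P^\p(w)$ pushes the image down onto $\soc E$ — is purely mechanical from the (uni/bi)serial structure displayed in \cref{eq:parabolic-sl3-composition-factors}, and is the one step that genuinely uses that $W_\p=S_1\times S_2$ is maximal, i.e.\ that $W^\p=\{e,s,st\}$ is small. I expect this bookkeeping to be the only real obstacle; assembling (S1), (S2), (S3) and the $\mathrm A_2$-condition then proves the lemma.
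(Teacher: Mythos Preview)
Your proof is correct and follows essentially the same approach as the paper: both reduce to degree-$0$ Hom-spaces by projectivity, identify $\End(P^\p(w))\isom\Complex[x]/(x^2)$ with $x$ factoring through $L(w)$, invoke the reduction lemma from \cref{sec:spherical-objects-sl2} to check the Calabi--Yau pairing only against the three indecomposable projectives, and then verify the same case analysis --- the paper draws the composition-series pictures \crefrange{eq:composition-pairing:SL3:Ps-Pe}{eq:composition-pairing:SL3:Pst-Pe} where you argue more abstractly via head/socle, but the content is identical.
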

\begin{proof}
	From the composition series in \cref{eq:parabolic-sl3-composition-factors},
	we see that $P^\p(s)$ and $P^\p(st)$ have endomorphism algebras
	isomorphic to $\Complex[x]/(x^2)$,
	with the non-trivial endomorphism
	$x\colon P^\p(w) \onto L(w) \into P^\p(w)$ for $w \in \{s, st\}$.
	All Hom-spaces in the following are one-dimensional,
	and we see that for the indecomposable projectives, the composition pairings
	\begin{align}
		\label{eq:composition-pairing:SL3:Ps-Pe}
		\Hom_\mathcal O(P^𝔭(e), P^𝔭(s)) \otimes \Hom_\mathcal O(P^𝔭(s), P^\p(e)) 
		&\longto \End(P^\p(s))/\langle\id\rangle\\* \notag
		\Biggl(
			\begin{tikzpicture}[x=.4cm,y=.4cm, baseline=(Fb.base),every node/.append style={font=\scriptsize}]
				\node (Fa) at (0,1) {$L(s)$};
				\node (Fb) at (1,0) {$L(e)$};
				\draw[brace] ($(Fa)+(0.6,.7)$) to node[brace tip](Pa){} ($(Fb)+(1.1,0.1)$);
				\node (Fc) at (-1,0) {$L(st)$};
				\node (Fd) at (0,-1) {$L(s)$};
				\node[font=\normalsize] at (0,-2.5) {$P^\p(s)$};
				\draw[brace'] (Fc.west |- Fd.south) to (Fd.south -| Fb.east);
				\node[font=\normalsize] at (6,-2.5) {$P^\p(e)$};
				\node (Ta) at (6,0.5) {$L(e)$};
				\node (Tb) at (6,-0.5) {$L(s)$};
				\draw[brace] ($(Tb)-(1,.3)$) to node[brace tip](Pb){} ($(Ta)-(1,-.3)$);
				\draw[brace'] (Ta.west |- Fd.south) to (Fd.south -| Ta.east);
				\node[font=\normalsize] at (12,-2.5) {$P^\p(s)$};
				\node at (12,1) {$L(s)$};
				\node (Sb) at (11,0) {$L(e)$};
				\node (Sc) at (13,0) {$L(st)$};
				\node (T') at (12,-1) {$L(s)$};
				\draw[->, limit bb] (Pa.center) to[out=25,in=180] (Pb.center);
				\draw[->, limit bb] (Ta) to[out=0,in=180] (T');
				\draw[brace'] (Sb.west |- Fd.south) to (Fd.south -| Sc.east);
			\end{tikzpicture}
		\Biggr)&\longmapsto 
		x_{P^\p(s)},\\[3pt]
		\label{eq:composition-pairing:SL3:Ps-Pst}
		\Hom(P^𝔭(st), P^𝔭(s)) \otimes \Hom(P^𝔭(s), P^\p(st))
		&\longto \End(P^\p(s))/\langle\id\rangle \\* \notag
		\Biggl(
			\begin{tikzpicture}[x=.4cm,y=.4cm, baseline=(Fb.base),every node/.append style={font=\scriptsize}]
				\node[font=\normalsize] at (0,-2.5) {$P^\p(s)$};
				\node (Fa) at (0,1) {$L(s)$};
				\node at (-1,0) {$L(e)$};
				\node (Fb) at (1,0) {$L(st)$};
				\node at (0,-1) {$L(s)$};
				\draw[brace] ($(Fa)+(0.6,.7)$) to node[brace tip](Pa){} ($(Fb)+(1.1,.1)$);
				\draw[brace'] (Fc.west |- Fd.south) to (Fd.south -| Fb.east);
				\node[font=\normalsize] at (6,-2.5) {$P^\p(st)$};
				\node (Ta) at (6,1) {$L(st)$};
				\node (Tb) at (6,0) {$L(s)$};
				\node (Tc) at (6,-1) {$L(st)$};
				\draw[brace] ($(Tc)-(1,.3)$) to node[brace tip](Pb){} ($(Tb)-(1,-.3)$);
				\draw[brace] ($(Ta)+(1,.3)$) to node[brace tip](Pc){} ($(Tb)+(1,-.3)$);
				\draw[brace'] (Ta.west |- Fd.south) to (Fd.south -| Ta.east);
				\node[font=\normalsize] at (12,-2.5) {$P^\p(s)$};
				\node (Sb) at (12,1) {$L(s)$};
				\node (Za) at (11,0) {$L(st)$};
				\node (Sc) at (13,0) {$L(e)$};
				\node (Zb) at (12,-1) {$L(s)$};
				\draw[brace] ($(Zb)-(0.8,.2)$) to node[brace tip](Pd){} ($(Za)-(1.0,0)$);
				\draw[->,limit bb] (Pa.center) to[out=45,in=180] (Pb.center);
				\draw[->,limit bb] (Pc.center) to[out=0,in=235] (Pd.center);
				\draw[brace'] (Sb.west |- Fd.south) to (Fd.south -| Sc.east);
			\end{tikzpicture}
		\Biggr)
		&\longmapsto x_{P^\p(s)},\\[3pt]
		\label{eq:composition-pairing:SL3:Pst-Pe}
		\Hom_\mathcal O(P^𝔭(s), P^𝔭(st)) \otimes \Hom_\mathcal O(P^𝔭(s), P^\p(st))
		&\longto \End(P^\p(ps))/\langle\id\rangle\\* \notag
		\Biggl(
			\begin{tikzpicture}[x=.4cm,y=.4cm, baseline=(Fb.base),every node/.append style={font=\scriptsize, inner sep=2pt}]
				\node[font=\normalsize] at (0,-2.5) {$P^\p(st)$};
				\node (Fa) at (0,1) {$L(st)$};
				\node (Fb) at (0,0) {$L(s)$};
				\node[braced box={Pa}, fit=(Fa) (Fb)]{};
				\node at (0,-1) {$L(st)$};
				\draw[brace'] (Fb.west |- Fd.south) to (Fd.south -| Fb.east);
				\node[font=\normalsize] at (6,-2.5) {$P^\p(s)$};
				\node (Ma) at (6,1) {$L(s)$};
				\node (Mb) at (5,0) {$L(st)$};
				\node (Md) at (7,0) {$L(e)$};
				\node (Mc) at (6,-1) {$L(s)$};
				\draw[brace'] (Mb.-170) to node[brace tip'](Pb){} (Mc.south west);
				\draw[brace]  (Mb.170)  to node[brace tip] (Ta){} (Ma.north west);
				\draw[brace'] (Mb.west |- Fd.south) to (Fd.south -| Md.east);
				\node[font=\normalsize] at (12,-2.5) {$P^\p(st)$};
				\node (Sb) at (12,1) {$L(st)$};
				\node (Sc) at (12,0) {$L(s)$};
				\node (Tb) at (12,-1) {$L(st)$};
				\draw[->, limit bb] (Pa.center) to[out=0,in=-135] (Pb.center);
				\draw[-, limit bb] (Ta) to[out=135, out looseness=1.5, in=135] (8,1.5);
				\draw[->, limit bb] (8,1.5) to[out=-45, in=180] (Tb);
				\draw[brace'] (Sb.west |- Fd.south) to (Fd.south -| Sc.east);
			\end{tikzpicture}
		\Biggr)
		&\longmapsto x_{P^\p(st)}
	\end{align}
	are non-degenerate and that
	$\Hom_{\CatO}(P^{\p}(e), P^{\p}(e)(s)) = 0 = \Hom_{\CatO}(P^{\p}(st), P^{\p}(e)(e))$.
	Hence, $P^\p(s)$ and $P^\p(st)$ are $0$-spherical objects
	in $\Db(\CatO_0^\p)$.
	In particular,
	$\dim \Hom(P^\p(s), P^\p(st)) = 1 = \Hom(P^\p(st), P^\p(s))$
	as required;
	so the set $\{P^\p(s), P^\p(st)\}$ is an $\mathrm A_2$-configuration.
\end{proof}

With the dimensions of the hom-spaces in
\crefrange{eq:composition-pairing:SL3:Ps-Pe}{eq:composition-pairing:SL3:Ps-Pst},
it follows immediately from \cref{def:twist-cotwist}
that the indecomposable projective modules under $T'_{P(s)}$ and $T'_{P(st)}$ are
\begin{equation}
	\label{eq:parabolc-sl3:images-under-twist}
	\begin{array}[t]{r @{} >{{}}c<{{}} @{} r @{} >{{}}c<{{}} @{} l}
		T'_{P^\p(s)}\colon 
		P^\p(e) & \mapsto & \{\degZero{P^\p(e)}  & \to & P^\p(s)\}         \\
		P^\p(s)                                & \mapsto &                               &     & P^\p(s)\hShift{1} \\
		P^\p(st)                               & \mapsto & \{\degZero{P^\p(st)} & \to & P^\p(s)\}
	\end{array}
	\qquad
	\begin{array}[t]{r @{} >{{}}c<{{}} @{} r @{} >{{}}c<{{}} @{} l}
		T'_{P^\p(st)}\colon 
		P^\p(e) & \mapsto & P^\p(e)             &     &  \\
		P^\p(s)                                 & \mapsto & \{\degZero{P^\p(s)} & \to & P^\p(st)\}         \\
		P^\p(st)                                & \mapsto &                              &     & P^\p(st)\hShift{1}.
	\end{array}
	\hspace*{-2em}
\end{equation}

\begin{proposition}
	\label{thm:parabolic-sl3:main-statement}
	For $\p$ as above,
	there are natural isomorphisms
	$T'_{P^\p(s)}  \isom \LSh{s}\hShift{1}$ and
	$T'_{P^\p(st)} \isom \LSh{t}\hShift{1}$ 
	of autoequivalences of $\CatO^\p_0$.
\end{proposition}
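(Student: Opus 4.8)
The plan is to rerun the argument of Theorem~\ref{thm:shuffling-is-twisting:sl2}, now over the parabolic quiver $e\rightleftarrows s\rightleftarrows st$ of~\eqref{eq:quiver-sl3}. By construction both sides are (shifted) mapping cones of natural transformations out of the identity, $\LSh{s}\hShift{1}=\{\degZero{\id_{\CatO^\p_0}}\Rightarrow\Theta_s\}$ and $T'_{P^\p(s)}=\{\degZero{\id_{\CatO^\p_0}}\Rightarrow\Xi'_{P^\p(s)}\}$, in which $\Theta_s$ and $\Xi'_{P^\p(s)}$ are exact endofunctors of $\CatO^\p_0$ commuting with direct sums: for $\Theta_s$ this is its restriction to $\CatO^\p_0$ from Remark~\ref{rmk:parabolic-translation-and-shuffling}, and for $\Xi'_{P^\p(s)}=\lin^\bullet(\hom^\bullet_\CatO(-,P^\p(s)),P^\p(s))$ it holds because $P^\p(s)$ is a projective module concentrated in a single degree, so that $\hom^\bullet_\CatO(-,P^\p(s))$ and $\lin^\bullet(-,P^\p(s))$ are honest (contravariant, exact) functors. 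Hence Corollary~\ref{cor:morita-equivalence-and-induced-module} applies just as in the $\SL_2$ case: with the $A^\p$-$A^\p$-bimodules $\FBim{\Theta_s}\coloneqq\Hom_\CatO(P^\p,\Theta_sP^\p)$ and $\FBim{\Xi'_{P^\p(s)}}\coloneqq\Hom_\CatO(P^\p,\Xi'_{P^\p(s)}P^\p)$ one gets
\[
	\LSh{s}\hShift{1}\isom -\otimes_{A^\p}\{A^\p\to\FBim{\Theta_s}\},\qquad
	T'_{P^\p(s)}\isom -\otimes_{A^\p}\{A^\p\to\FBim{\Xi'_{P^\p(s)}}\},
\]
so it suffices to exhibit an isomorphism $\FBim{\Theta_s}\isom\FBim{\Xi'_{P^\p(s)}}$ of bimodules under which the two structure maps out of $A^\p$ correspond. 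The statement for $T'_{P^\p(st)}$ is handled by the same scheme with $s$ replaced by $t$ and $P^\p(s)$ by $P^\p(st)$; note that $\Theta_t$ still restricts exactly to $\CatO^\p_0$ although it annihilates $P^\p(e)$, see~\eqref{eq:parabolc-sl3:images-under-translation-and-shuffling}.

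For the first isomorphism I would compute both bimodules explicitly. From~\eqref{eq:parabolc-sl3:images-under-translation-and-shuffling} we have $\Theta_sP^\p\isom P^\p(s)^{\oplus4}$, hence $\FBim{\Theta_s}\isom\Hom_\CatO(P^\p,P^\p(s))^{\oplus4}\isom P^\p(s)^{\oplus4}$ as a right $A^\p$-module, the left action being read off the naturality squares of $\unit_s$ on the generating morphisms of $A^\p$, just as in~\eqref{eqn:adj-of-Ps}. On the other hand, because $P^\p(s)$ is finite-dimensional, $\Xi'_{P^\p(s)}P^\p\isom\Hom_\CatO(P^\p,P^\p(s))^*\otimes_\Complex P^\p(s)$, so $\FBim{\Xi'_{P^\p(s)}}\isom P^\p(s)^*\otimes_\Complex P^\p(s)$ with the bimodule action given by pre- and postcomposition as in~\eqref{eq:shuffling-twisting-isomorphic:sl2-tensor} and Remark~\ref{rmk:induced-map-in-terms-of-shifted-copies}. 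Taking the canonical path-basis $\{\trivpath{s},\,s\from e,\,s\from st,\,x\}$ of $P^\p(s)=\trivpath{s}A^\p$, where $x=s\from e\from s=s\from st\from s$, and the dual basis of $P^\p(s)^*$, one then writes out the two sixteen-dimensional schematics in the style of~\eqref{eq:shuffling-twisting-isomorphic:sl2-tensor}--\eqref{eq:shuffling-twisting-isomorphic:sl2-hom} and checks that the evident bijection of basis vectors intertwines the two bimodule structures and matches the maps from $A^\p$. The $t$-case is entirely parallel, using $\Theta_tP^\p\isom P^\p(st)^{\oplus3}$ and the path-basis $\{\trivpath{st},\,st\from s,\,st\from s\from st\}$ of $P^\p(st)$.

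The step I expect to be the real labour --- rather than the real difficulty --- is this last matching. Two features make it heavier than the $\SL_2$ situation. First, the third vertex forces one to track the relations of~\eqref{eq:quiver-sl3} when deciding which composites of generators vanish in each bimodule: for instance $e\from s\from e=0$ and $st\from s\from e=0$, but $s\from e\from s=s\from st\from s\neq0$ and $st\from s\from st\neq0$. Second, in the $t$-case $\Theta_t$ is not faithful, so $\FBim{\Theta_t}$ vanishes over the vertex $e$; this is matched automatically, since $\FBim{\Xi'_{P^\p(st)}}\isom P^\p(st)^*\otimes_\Complex P^\p(st)$ has the same vanishing because $\Hom_\CatO(P^\p(e),P^\p(st))=0$ (consistent with $st\from s\from e=0$), but one has to keep it in view. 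As a variant of the whole argument one may instead check directly that both functors agree on the triangulated generators $P^\p(e),P^\p(s),P^\p(st)$ --- already done in~\eqref{eq:parabolc-sl3:images-under-translation-and-shuffling} and~\eqref{eq:parabolc-sl3:images-under-twist} --- and on the four quiver arrows, computing the images of the arrows under $T'$ via Remark~\ref{rmk:induced-map-in-terms-of-shifted-copies} and the elimination lemma~\ref{lem:gauss-elimination} as in~\eqref{eq:twisting-by-Le:action-on-a-b:triple-complexes}; either way the content is the same bookkeeping. Once both bimodule isomorphisms are in hand, the proposition follows, and --- together with the $\mathrm A_2$-configuration already established --- it also shows that the restrictions of $\LSh{s}\hShift{1}$ and $\LSh{t}\hShift{1}$ to $\Db(\CatO^\p_0)$ satisfy the braid relation.
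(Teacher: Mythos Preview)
Your proposal is correct and follows essentially the same route as the paper: reduce via Corollary~\ref{cor:morita-equivalence-and-induced-module} to an isomorphism of the $A^\p$-$A^\p$-bimodules $\FBim{\Theta_s}\isom\FBim{\Xi'_{P^\p(s)}}$, then match the sixteen-element path bases schematically as in \eqref{eq:shuffling-twisting-isomorphic:sl2-tensor}--\eqref{eq:shuffling-twisting-isomorphic:sl2-hom}, with the $t$-case handled mutatis mutandis. Your remarks on the quiver relations and on the vanishing of $\FBim{\Theta_t}$ over the vertex $e$ are apt and anticipate exactly the bookkeeping the paper does.
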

\begin{proof}
	We see from \cref{eq:parabolc-sl3:images-under-translation-and-shuffling,eq:parabolc-sl3:images-under-twist}
	that $T'_{P^\p(s)}  P^\p(w) \qis \LSh{s} P^\p(w)\hShift{1}$
	and  $T'_{P^\p(st)} P^\p(w) \qis \LSh{t} P^\p(w)\hShift{1}$
	for all $w \in W^\p$.
	It remains to show that the shuffling and spherical twist functors
	also map elements of $\Hom_{\CatO^\p}(P^\p(v), P^\p(w))$
	(for $v, w \in W^\p$)
	to isomorphic maps.
	This will be carried out analogously to \cref{thm:shuffling-is-twisting:sl2}.
	
	The proof for $\Theta_t \isom T'_{P^\p(st)}$ is done, mutatis mutandis, the same way as for $\Theta_s \isom T'_{P^\p(s)}$;
	we thus only show the latter.
	To that end, we show that
	the $A_𝔭$-$A_𝔭$-bimodules $\FBim{\Xi'_{P^\p(s)}}$ and $M_{\Theta_s}$,
	for which the functors $-\otimes \FBim{\Xi'_{P^\p(s)}}$ and $-\otimes M_{\Theta_s}$
	respectively correspond to $\Xi'_{P^\p(s)}$ and $\Theta_s$
	under $\CatO_0^\p \simeq \Mod{}[A_\p]$,
	are isomorphic.
	
	Since as an $A_\p$-module, $P^\p(s) = \trivpath{s} A_\p$
	has an explicit finite basis by paths of the quiver \cref{eq:quiver-sl3} ending in the vertex $s$,
	the module $\FBim{\Xi'_{P^\p(s)}} = P^\p(s)^* ⊗_\Complex P^\p(s)$%
	---the star stands for vector space dual---%
	has a $\Complex$-basis given by the sixteen pairwise tensor products in the schematic
	\begin{equation}
		\label{eq:sl3-p:M-Xi}
		\begin{tikzpicture}[mth, ampersand replacement=\&]
			\matrix (A) [
				matrix of math nodes, 
				row sep=0mm, 
				column sep=2mm, 
				text height=2ex, 
				text depth=.5ex,
				column 1/.style={text width=width("$(s←e←s)^*.$"), align=right},
				column 3/.style={text width=width("$(s←e←s)^*.$")-1em, align=left},
				nodes={font=\normalsize}
			] {
				(s←e←s)^*	\& \phantom{\otimes_\Complex} \& e    	\\
				(s←st)^*	\& \& s←e    	\\
				(s←e)^*		\& \& s←st 	\\
				e^*		\& \& s←e←s	\\
			};
			\node[braced box={T}, fit=(A-1-1) (A-4-1)] {} ;
			\node[braced box'={T'},fit=(A-1-3) (A-4-3)] {} ;
			\path[draw=none] (T) to node[anchor=center]{$\displaystyle ⊗_\Complex$} (T');
			\draw
				(A-1-1.west)					edge[|->, out=195, in=165] node[swap] {$(e←s)_*$} 	(A-2-1.west)
				(A-2-1.base west)		 		edge[|->, out=195, in=165] node[swap] {$(e←s)_*$} 	(A-4-1.west)
				([xshift=-5em]A-1-1.west) 		edge[|->, out=195, in=165] node[swap] {$(st←s)_*$} 	([xshift=-5em]A-3-1.west)
				([xshift=-5em]A-3-1.base west) 	edge[|->, out=195, in=165] node[swap] {$(s←st)_*$} 	([xshift=-5em]A-4-1.west)
				(A-1-3.east)					edge[|->, out=-15, in=15] node {$∘(s←e)$} 			(A-2-3.east)
				(A-2-3.base east)				edge[|->, out=-15, in=15] node {$∘(e←s)$} 			(A-4-3.east)
				([xshift=4.5em]A-1-3.east)		edge[|->, out=-15, in=15] node {$∘(s←st)$} 			([xshift=4.5em]A-3-3.east)
				([xshift=4.5em]A-3-3.base east)	edge[|->, out=-15, in=15] node {$∘(st←s).$} 		([xshift=4.5em]A-4-3.east);
		\end{tikzpicture}
	\end{equation}
	The right $A_𝔭$-action is given by precomposition of paths;
	under the left $A_𝔭$-action, a
	$\gamma \in A_𝔭$ acts on a $\lambda \in P^\p(s)^*$
	by $\gamma_*(\lambda) \colon v \mapsto \lambda(v \circ \gamma)$.
	The $A_𝔭$-$A_𝔭$-bimodule action on $\FBim{\Xi'_{P^\p(s)}}$ hence is
	such that the generating paths of $A_𝔭$ act from the left and right as indicated.
	
	To construct the module $\FBim{\Theta_s}$,
	we consider the endomorphism algebra $\End_{\CatO}(P^\p)$ 
	of $P^\p = P^\p(e) \oplus P^\p(s) \oplus P^\p(st)$,
	which is generated by the elements
	\begin{equation}
		\label{eq:End-Ps-sl3:table}
		\Mtrx{1\\&0\\&&0},
		\Mtrx{0\\ &1\\&&0},
		\Mtrx{0\\&0\\&&1},
		\Mtrx{0 \\ (s \from e) & 0 \\ && 0},
		\Mtrx{0 \\ & 0 & (s \from st) \\&& 0},
		\Mtrx{0 & (e \from s) \\ & 0 \\ && 0},
		\Mtrx{0 \\ & 0 \\ & (st \from s) & 0}.
	\end{equation}
	Since according to \cref{eq:parabolc-sl3:images-under-translation-and-shuffling}
	we have $\Theta_s P^\p = P^\p(s)^{\oplus 4}$,
	we endow these $P^\p(s)$'s with indices to make them distinguishable,
	identifying $P^\p(s)_1$ with $\Theta_s P^\p(e)$,
	$P^\p(s)_2 \oplus P^\p(s)_3$ with $\Theta_s P^\p(s)$
	and $P^\p(s)_4$ with $\Theta_s P^\p(st)$.
	A diagram chase of morphisms through the relevant naturality diagrams
	shows that $\Theta_s$ respectively maps 
	the above generators of $\End_{\CatO}(P^\p)$ from \cref{eq:End-Ps-sl3:table}
	to the endomorphisms
	\begin{equation}
		\label{eq:Theta-s-action-on-End-Ps-sl3:table}
		\Mtrx{1\\&0\\&&0\\&&&0}, 
		\Mtrx{0\\&1\\&&1\\&&&0}, 
		\Mtrx{0\\&0\\&&0\\&&&1} , 
		\Mtrx{0&& 1\\&0\\&&0\\&&&0}, 
		\Mtrx{0&&\\&0\\&&0\\&&1&0}, 
		\Mtrx{0\\1&0\\&&0\\&&&0}, 
		\Mtrx{0\\&0&&1\\&&0\\&&&0}.
	\end{equation}
	of $\Theta_s P^\p = P^\p(s)^{\oplus 4}$.
	Somewhat more suggestively, 
	we may write these morphisms $f$ as arrows connecting the two $P^\p(-)$'s
	on which they have non-zero kernel or cokernel.
	$\Theta_s$ then maps the last four of the generators $f$ from the table,
	which we depict by the following morphisms on the left,
	to the respective solid or dashed morphism on the right:
	\settowidth{\algnRef}{${}⊕{}$}
	\begin{equation}
		\label{eq:Theta-s-action-on-End-Ps-sl3}
		\begin{tikzcd}[column sep={\the\algnRef}, nodes={inner xsep=0pt}, row sep=small, ampersand replacement=\&]
			P^\p = P(e)\rar[phantom, "⊕"] 
			\& P(s)\ar[dl]\ar[dr, dashed] \rar[phantom, "⊕"]
			\& P(st) \ar[d, phantom, "" name=C]
			\&[4em] P(s)_1 \rar[phantom, "⊕"]  \ar[d, phantom, "" name=C']
			\& P(s)_2 \rar[phantom, "⊕"] 
			\& P(s)_3 \rar[phantom, "⊕"]\ar[dll, "\id"' very near end]\ar[dr, dashed, "\id" very near end] 
			\& P(s)_4 =\Theta_s P^\p
			\\
			\phantom{P^\p = {}} P(e)\rar[phantom, "⊕"] \ar[dr] 
			\& P(s) \rar[phantom, "⊕"]
			\& P(st) \ar[dl, dashed] \ar[d, phantom, "" name=D]
			\& P(s)_1 \rar[phantom, "⊕"] \ar[dr, "\id"' very near start] \ar[d, phantom, "" name=D']
			\& P(s)_2 \rar[phantom, "⊕"] 
			\& P(s)_3 \rar[phantom, "⊕"]
			\& P(s)_4 \ar[dll, "\id" very near start, dashed] \phantom{{}=\Theta_s P^\p}
			\\
			\phantom{P^\p = {}} P(e)\rar[phantom, "⊕"] 
			\& P(s) \rar[phantom, "⊕"]
			\& P(st)
			\& P(s)_1 \rar[phantom, "⊕"] 
			\& P(s)_2 \rar[phantom, "⊕"] 
			\& P(s)_3 \rar[phantom, "⊕"]
			\& P(s)_4 \mathrlap{.}\phantom{{}=\Theta_s P^\p}  \ar[phantom, from=C, to=C', "\xmapsto{\Theta_s}"]\ar[phantom, from=D, to=D', "\xmapsto{\Theta_s}"]
		\end{tikzcd}
	\end{equation}
	A vector space basis of $M_{\Theta_s} = \Hom(P^\p, \Theta_s P^\p)$
	is given by morphisms
	sending $P^\p$ to one of its summands $P^\p(w)$ (where $w \in \{e, s, st\}$)
	and further to a summand $P^\p(s)_i$ of $\Theta_s P^\p$.
	In other words, this basis consists of the 16 possible ways
	to map a summand $P^\p(w)$ of $P^\p$ on the right
	to the $P^\p(s)$ in the middle 
	and embed this into $\Theta_s P^\p$
	as one of the summands on the left:
	\begin{equation}
		\label{eq:sl3-p:M-Theta}
		\begin{tikzpicture}[mth, ampersand replacement=\&]
			\matrix (A) [
			matrix of math nodes, 
			row sep={3.5ex,between origins},
			column sep=10mm,
			column 1/.style={anchor=base east, text width=width("$P^\p(s)_4$")},
			column 3/.style={anchor=base west, text width=width("$P^\p(st)$")}
			] {
				P^𝔭(s)_3 \&  \& P^𝔭(s)\\
				P^𝔭(s)_1 \&  \& P^𝔭(e)\\[-1.75ex]
				\& P^𝔭(s) \\[-1.75ex]
				P^𝔭(s)_4 \&  \& P^𝔭(st)\\
				P^𝔭(s)_2 \&  \& P^𝔭(s)\\
			};
			\draw[->] (A-3-2)
				edge[->, out=180, in=0] (A-1-1)
				edge[->, out=180, in=0] (A-2-1)
				edge[->, out=180, in=0] (A-4-1)
				edge[->, out=180, in=0] (A-5-1)
				edge[<-, in=180, out=0, "$\id$" very near end] (A-1-3)
				edge[-, in=180, out=0] (A-2-3)
				edge[-, in=180, out=0] (A-4-3)
				edge[-, in=180, out=0, "$x$"' very near end] (A-5-3);
			\draw[looseness=1]
				(A-1-1.west)                     edge[|->, out=180+15, in=180-15] node[swap] {$𝛩_s(e←s)∘$}             (A-2-1.west)
				(A-2-1.base west)                edge[|->, out=180+15, in=180-15] node[swap, pos=.4] {$𝛩_s(e←s)∘$}     (A-5-1.west)
				([xshift=-5em]A-1-1.west)        edge[|->, out=180+15, in=180-15] node[swap] {$𝛩_s(st←s)∘$}            ([xshift=-5em]A-4-1.west)
				([xshift=-5em]A-4-1.base west)   edge[|->, out=180+15, in=180-15] node[swap] {$𝛩_s(s←st)∘$}            ([xshift=-5em]A-5-1.west)
				(A-1-3.east)                     edge[|->, out=   -15, in=    15] node {$∘(s←e)$}                      (A-2-3.east)
				(A-2-3.base east)                edge[|->, out=   -15, in=    15] node[pos=.4] {$∘(e←s)$}              (A-5-3.east)
				([xshift=4.5em]A-1-3.east)       edge[|->, out=   -15, in=    15] node {$∘(s←st)$}                     ([xshift=4.5em]A-4-3.east)
				([xshift=4.5em]A-4-3.base east)  edge[|->, out=   -15, in=    15] node {$∘(st←s).$}                    ([xshift=4.5em]A-5-3.east);
		\end{tikzpicture}
	\end{equation}
	To understand the $A^\p$-$A^\p$-bimodule action on $\FBim{\Theta_s}$,
	recall from \cref{cor:morita-equivalence-and-induced-module}
	that $\phi, \psi \in A^{\p}$ act on $m\in \FBim{\Theta_s}$
	by $\phi \ldot m \ldot \psi = \Theta_s(\phi) \circ m \circ \psi$,
	with the images $\Theta_s(\phi)$ from
	\cref{eq:Theta-s-action-on-End-Ps-sl3:table,eq:Theta-s-action-on-End-Ps-sl3}.
	On the vector space basis of $\FBim{\Theta_s}$ from \cref{eq:sl3-p:M-Theta},
	the generating paths of the algebra $A^\p$ therefore
	act from the left and right as indicated.
	
	Comparing \cref{eq:sl3-p:M-Xi} and \cref{eq:sl3-p:M-Theta}
	shows that the obvious isomorphism $M_{\Xi'_{P^\p(s)}} \isom M_{\Theta_s}$
	of vector spaces is an isomorphism of $A^\p$-$A^\p$-bimodules.
	It follows that $T'_{P^\p(s)} \simeq \{\id \Rightarrow -\otimes M_{\Xi'_{P^\p(s)}}\}$
	and $\Sh{s} = \{\id \Rightarrow -\} \cong M_{\Theta_s}$ are naturally isomorphic functors.
\end{proof}

\subsection*{Parabolic subalgebras of $\SL_n$}
We now transfer results for $\CatO^\p_0$ from $\SL_3$ to $\SL_n$.
Consider the parabolic subalgebra $\p$ of $\SL_n$
corresponding to the subgroup $W_\p=⟨s_2, \dotsc, s_{n-1}⟩ = S_{1}\times S_{n-1}$
of $S_n = ⟨s_1,\dotsc,s_{n-1}⟩$.
We let $\sigma_0 ≔ e$ and $\sigma_i ≔ s_1\dotsm s_i$ for $i \geq 1$;
the minimal length coset representatives then are $W^\p=\{\sigma_0,\dotsc,\sigma_{n-1}\}$. 

\begin{lemma}
	\label{lem:parabolic-sln:path-algebra}
	The category $\CatO^\p_0(\SL_n)$ is equivalent to $\Mod{}[A^\p(\SL_n)]$ for
	\begin{equation*}
		\label{eq:parabolic-sln:path-algebra}
		A^\p=\Complex\bigl[e\rightleftarrows\sigma_1\rightleftarrows\dotsb\rightleftarrows\sigma_{n-1}\bigr]
		\Bigm/
		\Bigl(
		\setlength\jot{-.3ex}
		\begin{scriptaligned}
			e\from\sigma_1\from e &= 0,\\
			\sigma_i\from\sigma_{i+1}\from\sigma_i &= \sigma_i\from\sigma_{i-1}\from\sigma_i,\\
			\sigma_i\from\sigma_{i\pm 1}\from\sigma_{i\pm 2} &= 0
		\end{scriptaligned}
		\Bigr)_{1 \leq i \leq n-2.}
	\end{equation*}
	We shall denote this quiver by $Q^\p$.
\end{lemma}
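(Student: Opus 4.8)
The plan is to read off the quiver $Q^\p$ and the defining relations from the structure of the parabolic projectives $P^\p(\sigma_i)$, and then to pin down the presentation by a dimension count modelled on the $\SL_3$ case treated above.

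\emph{Step 1 (the modules).} Applying the parabolic analogue of the generalised Kazhdan--Lusztig theorem \cref{thm:generalised-kl} together with the parabolic Kazhdan--Lusztig polynomials of the quotient $W^\p$ — which, $W^\p$ being a chain $\sigma_0<\sigma_1<\dots<\sigma_{n-1}$ in Bruhat order with $\ell(\sigma_i)=i$, are $p^\p_{\sigma_i\sigma_j}=1$ for $j\in\{i,i+1\}$ and $0$ otherwise, as one checks directly or inductively from the $\SL_3$ computation by iterating the wall-crossing functors of \cref{rmk:parabolic-translation-and-shuffling} — one gets that the parabolic Vermas are $M^\p(\sigma_i)=\begin{psmallmatrix}L(\sigma_i)\\L(\sigma_{i+1})\end{psmallmatrix}$ for $i<n-1$ and $M^\p(\sigma_{n-1})=L(\sigma_{n-1})$, and then, via parabolic BGG reciprocity, $P^\p(\sigma_0)=M^\p(\sigma_0)$, $P^\p(\sigma_i)=\begin{psmallmatrix}M^\p(\sigma_i)\\M^\p(\sigma_{i-1})\end{psmallmatrix}$ for $1\le i\le n-2$ and $P^\p(\sigma_{n-1})=\begin{psmallmatrix}M^\p(\sigma_{n-1})\\M^\p(\sigma_{n-2})\end{psmallmatrix}$, with Loewy layers
\[
	P^\p(\sigma_0)=\begin{psmallmatrix}L(\sigma_0)\\L(\sigma_1)\end{psmallmatrix},\qquad
	P^\p(\sigma_i)=\begin{psmallmatrix}L(\sigma_i)\\L(\sigma_{i-1})\ L(\sigma_{i+1})\\L(\sigma_i)\end{psmallmatrix}\ (1\le i\le n-2),\qquad
	P^\p(\sigma_{n-1})=\begin{psmallmatrix}L(\sigma_{n-1})\\L(\sigma_{n-2})\\L(\sigma_{n-1})\end{psmallmatrix}.
\]
This generalises \cref{eq:parabolic-sl3-composition-factors} in the evident way.

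\emph{Step 2 (the quiver).} Identifying $\CatO^\p_0\simeq\Mod{}[A^\p]$ with $A^\p=\End_{\CatO^\p_0}(P^\p)$, $P^\p=\bigoplus_i P^\p(\sigma_i)$, and using $\dim(\trivpath{\sigma_j}A^\p\trivpath{\sigma_i})_\ell=[P^\p(\sigma_j)_\ell:L(\sigma_i)]$, the radical heads $\rad P^\p(\sigma_j)/\rad^2 P^\p(\sigma_j)$ computed from Step 1 show that the Ext-quiver $Q^\p$ of $A^\p$ has exactly the arrows $\sigma_i\rightleftarrows\sigma_{i+1}$, each of multiplicity one, and no others; so $Q^\p$ is the quiver in the statement. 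Summing composition lengths gives $\dim_\Complex A^\p=2+4(n-2)+3=4n-3$. By Gabriel's theorem $A^\p\cong\Complex Q^\p/\mathfrak a^\p$ for some homogeneous admissible ideal $\mathfrak a^\p$, and it remains to identify $\mathfrak a^\p$ with the stated ideal $\mathfrak a$.

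\emph{Step 3 (the relations).} The ``straight-through'' relations — a length-two path through an interior vertex that does not return to its source — hold in $A^\p$ since $P^\p(\sigma_{i\pm1})$ has no composition factor $L(\sigma_{i\mp1})$, and $e\from\sigma_1\from e=0$ holds because $P^\p(\sigma_0)$ has only two Loewy layers. For the commutation relation, both sides lie in the one-dimensional space $(\trivpath{\sigma_i}A^\p\trivpath{\sigma_i})_2$, so they are proportional; the one non-formal point is that neither vanishes. Writing $f\colon P^\p(\sigma_i)\to P^\p(\sigma_{i\pm1})$ and $g\colon P^\p(\sigma_{i\pm1})\to P^\p(\sigma_i)$ for the two generating morphisms, $\operatorname{im}f$ is the submodule of $P^\p(\sigma_{i\pm1})$ with head $L(\sigma_i)$, which by rigidity equals the two-dimensional $\begin{psmallmatrix}L(\sigma_i)\\L(\sigma_{i\pm1})\end{psmallmatrix}$ containing $\soc P^\p(\sigma_{i\pm1})$, whereas $\ker g$ is a submodule of $P^\p(\sigma_{i\pm1})$ whose head (when non-zero) is $L(\sigma_{i\pm2})\ne L(\sigma_i)$, or which has dimension $<2$ at the ends of the chain; either way $\operatorname{im}f\not\subseteq\ker g$, so $gf\ne0$. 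With both loops at $\sigma_i$ non-zero and proportional, I rescale the $2(n-1)$ arrows successively starting from $\sigma_0$ — this does not disturb $e\from\sigma_1\from e$ nor the straight-through relations — so that all commutation relations hold on the nose. Then sending $\trivpath{\sigma_i}\mapsto\id_{P^\p(\sigma_i)}$ and the arrows to the rescaled morphisms gives a surjective graded algebra homomorphism $\Complex Q^\p/\mathfrak a\onto A^\p$.

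\emph{Step 4 (dimension count).} In $\Complex Q^\p/\mathfrak a$ every path of length $\ge3$ vanishes: a length-three walk on the linear graph either moves twice in the same direction, hence contains a straight-through subpath and is zero, or it zig-zags, and then moving the length-two loop it contains to its other branch (by a commutation relation, or the $\sigma_0$-relation) rewrites it as a straight-through length-two path followed by an arrow, again zero. Hence $\Complex Q^\p/\mathfrak a$ is spanned by the $n$ trivial paths, the $2(n-1)$ arrows, and the $n-1$ surviving length-two loops (one at each of $\sigma_1,\dots,\sigma_{n-1}$, the loop at $\sigma_0$ being killed), so $\dim_\Complex\Complex Q^\p/\mathfrak a\le n+2(n-1)+(n-1)=4n-3=\dim_\Complex A^\p$; therefore the surjection of Step 3 is an isomorphism, proving the lemma. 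The only genuinely non-formal ingredients are the parabolic Kazhdan--Lusztig input in Step 1 (standard, and explicit for $n=3$) and the non-degeneracy of the commutation relation in Step 3 — this last is the step I expect to be the real content, the rest being bookkeeping.
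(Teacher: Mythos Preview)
Your proof is correct and follows the same approach as the paper: compute the composition series of the parabolic Vermas and projectives via parabolic Kazhdan--Lusztig polynomials, then read off the Ext-quiver and relations. The paper's own proof is considerably terser---it cites the graphical calculus of Brundan--Stroppel for the KL polynomials and then simply asserts that the composition series in \cref{eq:composition-series-in-Op-Sn} determine the generating morphisms and relations---whereas you spell out the non-vanishing of the two loops at each $\sigma_i$, the rescaling of arrows, and the dimension count that pins down the ideal exactly; these are precisely the details the paper leaves implicit.
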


\begin{proof}
	We compute the composition series of Verma modules and indecomposable projectives in $\CatO^\p_0$
	using the generalised Kazhdan-Lusztig theorem
	\autocites[Cor.\ 7.1.3]{Irving:filtered-category-OS}[Thm.\ 1.3]{CC:parabolic-KL}.
	For parabolic subgroups of the form $W_\p=S_{k}\times S_{n-k}\leq S_n$
	there is a graphical calculus for computing parabolic Kazhdan-Lusztig polynomials
	\autocites[§5]{Brundan-Stroppel:Highest-weight-categories-I}{JS:Graphical-Kazhdan-Lusztig}.
	The composition series thus obtained are listed in \cref{eq:composition-series-in-Op-Sn}.
	\begin{table}
		\caption{Composition series of Verma modules and indecomposable projectives
			indexed by the $\sigma_i \in W^\p$.}
		\label{eq:composition-series-in-Op-Sn}
		\centering
		$\begin{array}{c|ccccc}
			\toprule
			& \sigma_0 & \sigma_1 & \sigma_2 &\cdots & \sigma_{n-1}\\
			\midrule
			M^𝔭(-)
			& \begin{smallmatrix}L(e)\\L(s_1)\end{smallmatrix}
			& \begin{smallmatrix}L(\sigma_1)\\L(\sigma_2)\end{smallmatrix}
			& \begin{smallmatrix}L(\sigma_2)\\L(\sigma_3)\end{smallmatrix}
			& \cdots
			& L(\sigma_{n-1})\\[1em]
			P^𝔭(-)
			& \begin{smallmatrix}L(e)\\L(s_1)\end{smallmatrix}
			& \begin{smallmatrix}L(\sigma_1)\\L(e)\quad L(\sigma_2)\\L(\sigma_1)\end{smallmatrix}
			& \begin{smallmatrix}L(\sigma_2)\\L(s_1)\quad L(\sigma_3)\\L(\sigma_2)\end{smallmatrix}
			& \cdots
			& \begin{smallmatrix}L(\sigma_{n-1})\\L(\sigma_{n-2})\\L(\sigma_{n-1})\end{smallmatrix}\\
			\bottomrule
		\end{array}$
	\end{table}
	These composition series show that the unique (up to scalar)
	morphisms $P^\p(\sigma_i) \to P^\p(\sigma_{i \pm 1})$
	are irreducible,
	that they generate $\End_{\CatO^\p} \bigoplus_{i=0}^{n-1} P(\sigma_i)$,
	and that their relations generate precisely
	the ideal quotiented out in the statement of the lemma.
\end{proof}

Every Verma module $M^\p(\sigma_i)$
fits uniquely into a short exact sequence
\[
	M^\p(\sigma_i)\into P^\p(\sigma_{i+1})\onto M^\p(\sigma_{i+1});
\]
in particular, $\Theta_{s_i} M^\p(\sigma_i)=P^\p(\sigma_i)$
is always projective,
which is not true in the non-parabolic category $\CatO_0$.
From these sequences we obtain
the images of the indecomposable projective and Verma modules $M$
under translation and shuffling functors
listed in \crefrange{eq:parabolic-sln:translation-of-vermas-and-projectives}{eq:parabolic-sln:shuffling-of-projectives}
(two-term complexes are understood to have the right entry in degree zero).
\bgroup
\newcommand{\M}[1]{M^\p(\sigma_{#1})}
\renewcommand{\P}[1]{P^\p(\sigma_{#1})}
\begin{table}
	\caption{Images of Verma modules and indecomposable projectives under translation functors.}
	\label{eq:parabolic-sln:translation-of-vermas-and-projectives}
	$\begin{array}{@{} c|cccc@{}}
		\toprule
		M     & \Theta_{s_1} M & \Theta_{s_2} M & \Theta_{s_3} M & \cdots         \\ \midrule
		\M{0} & \P{1}          &                &  \\
		\M{1} & \P{1}          & \P{2}          &                &  \\
		\M{2} & \P{1}          & \P{2}          & \P{3}          &                \\
		\M{3} &                & \P{2}          & \P{3}          &                \\
		\M{4} &                &                & \P{3}          & \smash{\ddots} \\ \bottomrule
	\end{array}$
	\hfill
	$\begin{array}{@{} c|cccc @{}}
		\toprule
		M     & \Theta_{s_1} M & \Theta_{s_2} M & \Theta_{s_3} M & \cdots         \\ \midrule
		\P{0} & \P{1}          &                &  \\
		\P{1} & \P{1}^2        & \P{2}          &                &  \\
		\P{2} & \P{1}          & \P{2}^2        & \P{3}          &                \\
		\P{3} &                & \P{2}          & \P{3}^2        &                \\
		\P{4} &                &                & \P{3}          & \smash{\ddots} \\ \bottomrule
	\end{array}$
\end{table}
\begin{table}
	\caption{Images of Verma modules under derived shuffling functors.}
	\label{eq:parabolic-sln:shuffling-of-vermas}
	\centering
	$\begin{array}{@{} c | *{3}{r@{}>{{}}l<{{}}@{}l} l @{} }
		\toprule
		M & \multicolumn{3}{c}{\LSh{s_1} M}  &  \multicolumn{3}{c}{\LSh{s_2} M}  & \multicolumn{3}{c}{\LSh{s_3} M}  & \cdots         \\ \midrule
		\M{0}                   &         &             & \M{1}    & \M{0}    & \hShift{-1} &          & \M{0}    & \hShift{-1} &         &                \\
		\M{1}                   & \{\M{1} & \longto     & \P{1})\} &          &             & \M{2}    & \M{1}    & \hShift{-1} &         &                \\
		\M{2}                   & \M{2}   & \hShift{-1} &          & \{ \M{2} & \longto     & \P{2} \} &          &             & \M{3}   &                \\
		\M{3}                   & \M{3}   & \hShift{-1} &          & \M{3}    & \hShift{-1} &          & \{ \M{3} & \to         & \P{3}\} &                \\
		\M{4}                   & \M{4}   & \hShift{-1} &          & \M{4}    & \hShift{-1} &          & \M{4}    &             &         &  \\ 
		\M{5}                   & \M{5}   & \hShift{-1} &          & \M{5}    & \hShift{-1} &          & \M{5}    &             &         & \smash{\ddots} \\ \bottomrule
	\end{array}$
\end{table}
\begin{table}
	\caption{Images of indecomposable projectives under derived shuffling functors.}
	\label{eq:parabolic-sln:shuffling-of-projectives}
	\centering
	$\begin{array}{@{} c | *{3}{r@{}>{{}}l<{{}}@{}l} l @{} }
		\toprule
		M & \multicolumn{3}{c}{\LSh{s_1} M}  & \multicolumn{3}{c}{\LSh{s_2} M}  & \multicolumn{3}{c}{\LSh{s_3} M} & \cdots         \\ \midrule
		\P{0}                   &         &             & \M{1}    & \P{0}   & \hShift{-1} &          & \P{0}   & \hShift{-1} &         &  \\
		\P{1}                   &         &             & \P{1})   & \{\P{1} & \longto     & \P{2}\}  & \P{1}   & \hShift{-1} &         &  \\
		\P{2}                   & \{\P{2} & \longto     & \P{1})\} &         &             & \P{2}    & \{\P{2} & \longto     & \P{3}\} &                \\
		\P{3}                   & \P{3}   & \hShift{-1} &          & \{\P{3} & \longto     & \P{2} \} &         &             & \P{3}   &                \\
		\P{4}                   & \P{4}   & \hShift{-1} &          & \P{4}   & \hShift{-1} &          & \{\P{4} & \longto     & \P{2}\} & \\
		\P{5}                   & \P{5}   & \hShift{-1} &          & \P{5}   & \hShift{-1} &          & \P{5}   & \hShift{-1} &         &  \smash{\ddots} \\ \bottomrule
	\end{array}$
\end{table}
\egroup

\begin{lemma}
	$\{P^\p(\sigma_1),\dotsc,P^\p(\sigma_{n-1})\}$
	is an $\mathrm A_{n-2}$-configuration of $0$-spherical objects.
\end{lemma}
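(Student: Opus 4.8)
The plan is to prove this exactly as in the $\SL_3$ case of \cref{thm:parabolic-sl3:main-statement}: first I would verify that each $P^\p(\sigma_i)$ with $1\le i\le n-1$ is a $0$-spherical object, and then read the $\mathrm A$-configuration condition straight off the Hom-dimensions supplied by \cref{eq:composition-series-in-Op-Sn}. Everything happens inside $\Db(\CatO^\p_0)$, where the $P^\p(\sigma_i)$ are projective; the warning of \cref{caveat:parabolic-projectives-not-projective} — that they fail to be projective in the non-parabolic $\CatO_0$ — plays no role.

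For the $0$-spherelike property: from \cref{eq:composition-series-in-Op-Sn} each $P^\p(\sigma_i)$ has simple head and simple socle, both $L(\sigma_i)$, so $\End_{\CatO^\p_0}(P^\p(\sigma_i))\isom\Complex[x]/(x^2)$ with $x\colon P^\p(\sigma_i)\onto L(\sigma_i)\into P^\p(\sigma_i)$; here $x\neq 0$ since its image is the socle, and $x^2=0$ since that image lies in $\ker x=\rad P^\p(\sigma_i)$. Because $\CatO^\p_0$ has finite global dimension, finitely many simples and finite-dimensional Hom-spaces, \cref{def:spherical-object:finite-total-dimension} is automatic; and since source and target are projective modules, $\Hom^*_{\Db(\CatO^\p_0)}(P^\p(\sigma_i),P^\p(\sigma_j))$ is concentrated in degree zero, so in particular $\Hom^*_{\Db}(P^\p(\sigma_i),P^\p(\sigma_i))=\End_{\CatO^\p_0}(P^\p(\sigma_i))\isom\Complex[x]/(x^2)$ with $x$ of degree $d=0$, which is \cref{def:spherical-object:spherical-cohomology}.

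For the Calabi--Yau property I would invoke the reduction proved at the start of \cref{sec:spherical-objects-sl2} (the composition pairing need only be tested against indecomposable projectives), so it suffices to treat $E=P^\p(\sigma_k)$ with $1\le k\le n-1$ and $F=P^\p(\sigma_j)$ with $0\le j\le n-1$. As all the relevant $\Hom^*$ are concentrated in degree zero, only the degree-zero pairing $\Hom_{\CatO^\p_0}(P^\p(\sigma_j),P^\p(\sigma_k))\otimes\Hom_{\CatO^\p_0}(P^\p(\sigma_k),P^\p(\sigma_j))\to\End(P^\p(\sigma_k))/\langle\id\rangle$ matters, and $\dim\Hom_{\CatO^\p_0}(P^\p(v),M)=[M:L(v)]$ together with \cref{eq:composition-series-in-Op-Sn} gives these Hom-dimensions as $2$ for $j=k$, $1$ for $|j-k|=1$, and $0$ for $|j-k|\ge 2$. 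The last case is vacuous; for $j=k$ the pairing $\End(E)\times\End(E)\to\End(E)/\langle\id\rangle$ sends $(\id,x)$ and $(x,\id)$ to $x$ and $(\id,\id),(x,x)$ to $0$, hence is non-degenerate. The substantive case is $|j-k|=1$: both factors are one-dimensional, spanned by the irreducible maps of \cref{lem:parabolic-sln:path-algebra}, and realising them via the presentation of $A^\p$ I would identify their composite $P^\p(\sigma_k)\to P^\p(\sigma_j)\to P^\p(\sigma_k)$ with the length-two loop $\sigma_k\from\sigma_{k\pm 1}\from\sigma_k$ at $\sigma_k$, which by the relation $\sigma_i\from\sigma_{i+1}\from\sigma_i=\sigma_i\from\sigma_{i-1}\from\sigma_i$ of \cref{lem:parabolic-sln:path-algebra} equals the generator $x$ of $\End(P^\p(\sigma_k))$ modulo $\langle\id\rangle$ and is therefore non-zero. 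The two ends of the $\mathrm A$-quiver need a brief separate look: for $k=n-1$ the composite is $x$ directly (no such relation is needed — or available — at the index $n-1$), and for $k=1$ one notes that the relation $e\from\sigma_1\from e=0$ concerns the loop at $e=\sigma_0$, not the loop at $\sigma_1$, so it does not interfere. This is the uniform version of the diagram chases \crefrange{eq:composition-pairing:SL3:Ps-Pe}{eq:composition-pairing:SL3:Pst-Pe}, and it shows each $P^\p(\sigma_i)$, $1\le i\le n-1$, is $0$-spherical.

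Finally, the same count shows for $i\neq j$ in $\{1,\dotsc,n-1\}$ that $\dim\Hom^*_{\Db(\CatO^\p_0)}(P^\p(\sigma_i),P^\p(\sigma_j))=\dim\Hom_{\CatO^\p_0}(P^\p(\sigma_i),P^\p(\sigma_j))$ is $1$ when $|i-j|=1$ and $0$ when $|i-j|\ge 2$, which is precisely the defining property of an $\mathrm A_{n-1}$-configuration of $0$-spherical objects. I expect the one genuine obstacle to be the $|j-k|=1$ step of the Calabi--Yau verification — pinning down that the composite of the two irreducible maps is a \emph{nonzero} multiple of $x$ rather than merely a non-unit endomorphism of $P^\p(\sigma_k)$ — since that is exactly where the relations of $A^\p$, together with the two endpoint cases, have to be handled carefully; everything else is a transcription of \cref{eq:composition-series-in-Op-Sn}.
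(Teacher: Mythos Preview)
Your proof is correct and essentially the same as the paper's: both reduce the Calabi--Yau check to indecomposable projectives, read the Hom-dimensions off \cref{eq:composition-series-in-Op-Sn}, and verify that the composite of the two irreducible maps between adjacent $P^\p(\sigma_k)$'s is the non-trivial endomorphism $x$ --- you via the quiver relations of \cref{lem:parabolic-sln:path-algebra}, the paper via a composition-series diagram in the style of \crefrange{eq:composition-pairing:SL3:Ps-Pe}{eq:composition-pairing:SL3:Pst-Pe}. (You also correctly write $\mathrm A_{n-1}$; the $\mathrm A_{n-2}$ in the lemma statement is a typo, as the main theorem confirms.)
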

\begin{proof}
	The composition series from \cref{eq:composition-series-in-Op-Sn}
	exhibit that $\Hom_{\CatO_0^\p}(P^\p(\sigma_i), P^\p(\sigma_i)) \isom \Complex[x]/(x^2)$
	for all $1 \leq i \leq n-1$,
	that the nontrivial endomorphism $x$ is the degree $2$-map
	\[
		x\colon P^\p(\sigma_i) \onto L^\p(\sigma_i) \into P^\p(\sigma_i),
	\]
	and that
	\[
		\dim\Hom_{\CatO_0^\p}(P^\p(\sigma_j), P^\p(\sigma_i)) =
		\begin{smallcases}
			2 & \text{if $i=j$},\\
			1 & \text{if $|i-j|=1$}\\
			0 & \text{otherwise.}
		\end{smallcases}
	\]
	It is sufficient to check non-degeneracy of the composition pairing
	\[
		\Hom_{\CatO_0^\p}(-, P^\p(\sigma_i)) 
		\otimes \Hom_{\CatO_0^\p}(P^\p(\sigma_i), -)
		\to \langle x_{P^\p(\sigma_i)} \rangle
	\]
	only for the indecomposable projectives $P^\p(\sigma_{i\pm 1})$
	that are connected to $P^\p(\sigma_{i})$ by an arrow in $Q^\p$
	because for $j \notin \{i-1, i, i+1\}$,
	the composition pairing
	\[
		\underbrace{\Hom_{\CatO_0^\p}(P^\p(\sigma_j), P^\p(\sigma_i))}_{0}
		\otimes 
		\underbrace{\Hom_{\CatO_0^\p}(P^\p(\sigma_i), P^\p(\sigma_j))}_{0}
		\to \langle x_{P^\p(\sigma_i)} \rangle
	\]
	is non-degenerate trivially.
	
	For $P^\p(\sigma_{i\pm 1})$, we see,
	analoguously to \crefrange{eq:composition-pairing:SL3:Ps-Pe}{eq:composition-pairing:SL3:Pst-Pe},
	that the composition of the only (up to scalars) non-zero morphisms to and from $P^\p(\sigma_{i})$,
	which are written down in terms of composition series in the following,
	is $x_{P^\p(\sigma_i)}$.
	This shows that for all $1\leq i<n-2$
	(and mutatis mutandis also for $i = n-1$),
	the composition pairing
	\begin{align}
		\Hom(P^\p(\sigma_{i\pm 1}), P^\p(\sigma_{i}))
			\otimes\Hom(P^\p(\sigma_i), P^\p(\sigma_{i\pm 1}))
			& \longto \End(P^\p(\sigma_i))/\langle \id\rangle\\
			\Biggl(
			\tikzset{x=.4cm,y=.4cm, baseline=(Fb.base),every node/.append style={font=\scriptsize, inner sep=0pt}}
			\underbrace{
				\begin{tikzpicture}[remember picture]
					\node (Fa) at (0,1) {$L(\sigma_i)$};
					\node at (-1.25,0) {$L(\sigma_{i+1})$};
					\node (Fb) at (1.75,0) {$L(\sigma_{i-1})$};
					\node (Fc) at (0,-1) {$L(\sigma_i)$};
					\draw[brace] (Fa.50) to node[brace tip](Pa){} (Fb.10);
				\end{tikzpicture}
			}_{\textstyle P^\p(\sigma_{i})}
			\qquad
			\underbrace{
				\begin{tikzpicture}[remember picture]
					\node (Ma) at (0,1) {$L(\sigma_{i+1})$};
					\node (Mb) at (-1.25,0) {$L(\sigma_{i\pm 2})$};
					\node at (1.25,0) {$L(\sigma_{i})$};
					\node (Mc) at (0,-1) {$L(\sigma_{i+1})$};
					\draw[brace'] (Mb.-170) to node[brace tip'](Pb){} (Mc.south west);
					\draw[brace]  (Mb.170)  to node[brace tip] (Ta){} (Ma.north west);
					\begin{scope}[overlay]
						\draw[->] (Pa.center) to[out=70, out looseness=1.6, in=-135] (Pb.center);
					\end{scope} 
				\end{tikzpicture}
			}_{\textstyle P^\p(\sigma_{i\pm 1})}
			\qquad
			\underbrace{
				\begin{tikzpicture}[remember picture, trim left=(Gd.west)]
					\node (Ga) at (0,1) {$L(\sigma_i)$};
					\node (Gd) at (-1.75,0) {$L(\sigma_{i+1})$};
					\node (Gb) at (1.25,0) {$L(\sigma_{i-1})$};
					\node (Gc) at (0,-1) {$L(\sigma_i)$};
					\draw[brace']  (Gc.-80)to node[brace tip'](Tb){} (Gb.-10) ;
						\draw[-, limit bb] (Ta) to[out=135, in looseness=1.5, in=115] (-4, 0);
						\draw[->, limit bb] (-4, 0) to[out=-65, out looseness=1.5, in=-70] (Tb);
				\end{tikzpicture}
			}_{\textstyle P^\p(\sigma_{i})}
			\Biggr)
			& \longmapsto x_{P^\p(\sigma_i)} \notag
		\end{align}
		is non-degenerate.
\end{proof}
\begin{theorem}
	\label{thm:main-result}
	For the parabolic subalgebra $\p\subseteq\SL_n$
	corresponding to the subgroup $W_\p = S_{n-1} \times S_1 < S_n$,
	the auto-equivalences $\LSh{s_i} \hShift{1}$ and $T'_{P^\p(\sigma_i)}$ of $\Db(\CatO^\p_0)$
	are naturally isomorphic
	for every $1 \leq i \leq n-1$.
\end{theorem}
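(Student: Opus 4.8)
The plan is to reproduce the proof of \cref{thm:parabolic-sl3:main-statement} verbatim, now for arbitrary $n$ and arbitrary $1\leq i\leq n-1$; all the input is already at hand, namely the path algebra $A^\p$ of \cref{lem:parabolic-sln:path-algebra}, the composition series collected in \cref{eq:composition-series-in-Op-Sn}, and the images of the indecomposable projectives under translation and shuffling functors in \cref{eq:parabolic-sln:translation-of-vermas-and-projectives,eq:parabolic-sln:shuffling-of-projectives}; moreover, by the preceding lemma $P^\p(\sigma_i)$ is $0$-spherical, so that $T'_{P^\p(\sigma_i)}$ is an auto-equivalence of $\Db(\CatO^\p_0)$. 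By the discussion following \cref{def:shuffling-functor} and by \cref{def:twist-cotwist}, both functors in the statement are mapping cones of natural transformations out of the identity,
\[
	\LSh{s_i}\hShift{1} = \{\degZero{\id}\Rightarrow\Theta_{s_i}\}
	\quad\text{and}\quad
	T'_{P^\p(\sigma_i)} = \{\degZero{\id}\Rightarrow\Xi'_{P^\p(\sigma_i)}\},
\]
the structure maps being $\unit_{s_i}$ and $\ev'$. Exactly as in the proofs of \cref{thm:shuffling-is-twisting:sl2} and \cref{thm:parabolic-sl3:main-statement}, \cref{cor:morita-equivalence-and-induced-module} identifies $\Theta_{s_i}$ and $\Xi'_{P^\p(\sigma_i)}$, under $\CatO^\p_0\simeq\Mod{}[A^\p]$, with $-\otimes_{A^\p}M_{\Theta_{s_i}}$ and $-\otimes_{A^\p}M_{\Xi'_{P^\p(\sigma_i)}}$, where $M_F\coloneqq\Hom_{\CatO^\p}(P^\p,FP^\p)$. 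Since the tables \cref{eq:parabolic-sln:translation-of-vermas-and-projectives,eq:parabolic-sln:shuffling-of-projectives} already yield $T'_{P^\p(\sigma_i)}P^\p(w)\qis\LSh{s_i}P^\p(w)\hShift{1}$ for every $w\in W^\p$, the theorem reduces to producing an isomorphism $M_{\Theta_{s_i}}\isom M_{\Xi'_{P^\p(\sigma_i)}}$ of $A^\p$-$A^\p$-bimodules compatible with the two structure maps $A^\p\to M$ induced by $\unit_{s_i}$ and $\ev'$; such a bimodule isomorphism automatically matches the action of both functors on all of $\Hom_{\CatO^\p}(P^\p(v),P^\p(w))$.

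To compute $M_{\Xi'_{P^\p(\sigma_i)}}$ I would use that $P^\p(\sigma_i)=\trivpath{\sigma_i}A^\p$ is finite dimensional; this gives, as in \cref{eq:sl3-p:M-Xi}, $M_{\Xi'_{P^\p(\sigma_i)}}\isom P^\p(\sigma_i)^*\otimes_\Complex P^\p(\sigma_i)$, with $\gamma\in A^\p$ acting on the right factor by precomposition of paths and on the left factor by $\gamma_*(\lambda)\colon v\mapsto\lambda(v\gamma)$. A $\Complex$-basis is furnished by the basis of $P^\p(\sigma_i)$ consisting of the paths of $Q^\p$ ending at $\sigma_i$, which one reads off from \cref{eq:composition-series-in-Op-Sn}: for $1\leq i\leq n-2$ it is $\trivpath{\sigma_i}$, $\sigma_i\from\sigma_{i-1}$, $\sigma_i\from\sigma_{i+1}$ and the loop $\sigma_i\from\sigma_{i-1}\from\sigma_i=\sigma_i\from\sigma_{i+1}\from\sigma_i$, so the bimodule is $16$-dimensional; for $i=n-1$ the vertex $\sigma_{i+1}$ does not exist, the basis has three elements, and the bimodule is $9$-dimensional --- these being precisely the two situations already handled for $\p=\langle t\rangle\leq S_3$ in \cref{thm:parabolic-sl3:main-statement}. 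What makes the general case no harder is that every path of $Q^\p$ not supported on the subquiver with vertices $\sigma_{i-1},\sigma_i,\sigma_{i+1}$ acts by zero, so the bimodule and its schematic depend only on that local piece.

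Dually, \cref{eq:parabolic-sln:translation-of-vermas-and-projectives} gives $\Theta_{s_i}P^\p\isom P^\p(\sigma_i)^{\oplus 4}$ (respectively $P^\p(\sigma_i)^{\oplus 3}$ for $i=n-1$), the summands being $\Theta_{s_i}P^\p(\sigma_{i-1})$, the two summands of $\Theta_{s_i}P^\p(\sigma_i)$, and $\Theta_{s_i}P^\p(\sigma_{i+1})$; hence $M_{\Theta_{s_i}}\isom\Hom_{\CatO^\p}(P^\p,P^\p(\sigma_i))^{\oplus 4}$ is again $16$-dimensional, with basis the $16$ ways of mapping a summand $P^\p(\sigma_j)$ of $P^\p$ to $P^\p(\sigma_i)$ and then into $\Theta_{s_i}P^\p$, as in \cref{eq:sl3-p:M-Theta}. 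Its bimodule structure is $\phi\ldot m\ldot\psi=\Theta_{s_i}(\phi)\circ m\circ\psi$ by \cref{cor:morita-equivalence-and-induced-module}, hence determined by the images $\Theta_{s_i}(\phi)$ of the arrows of $Q^\p$; these I would obtain by chasing morphisms through the naturality squares of $\unit_{s_i}$, exactly as in \cref{eq:Theta-s-action-on-End-Ps-sl3:table,eq:Theta-s-action-on-End-Ps-sl3}. Arrows disjoint from $\sigma_i$ go to $0$, and the four arrows among $\sigma_{i-1},\sigma_i,\sigma_{i+1}$ go to the very endomorphisms of $P^\p(\sigma_i)^{\oplus 4}$ produced in the $\SL_3$ computation, because $\Theta_{s_i}$ and $\unit_{s_i}$ only concern the translation through the $s_i$-wall. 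Comparing the two schematics as in \cref{thm:parabolic-sl3:main-statement}, the evident bijection of basis vectors is an isomorphism of $A^\p$-$A^\p$-bimodules respecting the structure maps from $A^\p$, which delivers the asserted natural isomorphism $\LSh{s_i}\hShift{1}\isom T'_{P^\p(\sigma_i)}$.

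I do not anticipate a deep obstacle: the passage from $\SL_3$ to $\SL_n$ requires no new computation, only the \emph{locality} principle that all of the bimodule structure of $M_{\Theta_{s_i}}$ and $M_{\Xi'_{P^\p(\sigma_i)}}$, together with both structure maps $A^\p\to M$, is concentrated on the three-vertex subquiver $\{\sigma_{i-1},\sigma_i,\sigma_{i+1}\}$ of $Q^\p$, everything further from $\sigma_i$ acting by zero. Establishing this carefully --- in particular verifying that the non-trivial endomorphism $x\colon P^\p(\sigma_i)\onto L(\sigma_i)\into P^\p(\sigma_i)$ is the relevant composite, so that the two bimodule structures match on the nose --- is the one genuinely hands-on step, and it is precisely the local path-algebra bookkeeping already carried out in the proofs of \cref{thm:shuffling-is-twisting:sl2} and \cref{thm:parabolic-sl3:main-statement}, the boundary case $i=n-1$ being handled just as $\Theta_t\isom T'_{P^\p(st)}$ was handled there.
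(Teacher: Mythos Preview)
Your proposal is correct and follows essentially the same approach as the paper: both reduce to showing $M_{\Theta_{s_i}}\cong M_{\Xi'_{P^\p(\sigma_i)}}$ as $A^\p$-$A^\p$-bimodules via \cref{cor:morita-equivalence-and-induced-module}, and both invoke the locality principle that everything is supported on the subquiver $\{\sigma_{i-1},\sigma_i,\sigma_{i+1}\}$ to recycle the explicit $\SL_3$ computation from \cref{thm:parabolic-sl3:main-statement} (with the substitution $e\mapsto\sigma_{i-1}$, $s\mapsto\sigma_i$, $st\mapsto\sigma_{i+1}$). Your treatment is in fact slightly more explicit than the paper's about the boundary case $i=n-1$ and about the need for compatibility with the structure maps $A^\p\to M$, but the strategy is the same.
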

\begin{proof}
	Let $A^\p_n$ and $Q^\p_n$
	respectively be the path algebra quotient and the quiver from \cref{lem:parabolic-sln:path-algebra}
	for $\SL_n$.
	The assignment $p\colon A^\p_n \to A^\p_n/(\trivpath{\sigma_0}) \xto{\cong} A^\p_{n-1}$
	induces fully faithful functor
	\begin{equation*}
		p^* \colon \CatO^\p_0(\SL_{n-1})\to\CatO^\p_0(\SL_{n}),\quad
		P^ \p(e) \mapsto M^\p(\sigma_1),\quad
		P^\p(\sigma_{i-1})\mapsto P^\p(\sigma_{i})\ \text{for $1\leq k \leq n-2$,}
	\end{equation*}
	which exhibits $\CatO^\p_0(\SL_{n-1})$ as a full subcategory of $\CatO^\p_0(\SL_{n})$;
	by induction, it follows that on the triangulated category of $\Db(\SL_{n})$
	generated by $P^\p(\sigma_2), \dotsc, P^\p(\sigma_{n-1})$
	there are natural isomorphisms $\LSh{s_i} \hShift{1} \cong T'_{P^\p(\sigma_i)}$
	for $2 \leq i \leq n-1$.
	We shall see, however, that it is easier to show the statement
	also for $i = 1$ and all of $\Db(\SL_{n})$ directly,
	without resorting to induction,
	employing the proof of \cref{thm:parabolic-sl3:main-statement}
	with the following alterations:
	
	Since
	\begin{equation}
		\label{eq:parabolic-sln:proof-of-main-thm:zero-hom}
		\Hom_\CatO(P^\p(\sigma_i), P^\p(\sigma_j)) = 0
		\qquad
		\text{for $0 \leq i,j \leq n-1$ with $\lvert i-j\rvert \geq 2$}
	\end{equation}
	it follows from the definition of $T'$
	that $T'_{P^\p(\sigma_i)}$ (for $1 \leq i \leq n-1$)
	acts as identity on all $P^\p(\sigma_j)$
	for $0 \leq j \leq n-1$ with $\lvert i-j\rvert \geq 2$,
	as does, according to \cref{eq:parabolic-sln:shuffling-of-projectives},
	the functor $\LSh{s_i}\hShift{1}$.
	One checks that both functors act by identity also on morphisms
	between these modules;
	this shows that $T'_{P(\sigma_i)} \cong \LSh{s_i} \hShift{1}$ for all $1 \leq i \leq n-1$
	as functors on the triangulated subcategory of $\Db(\CatO^\p_0(\SL_n))$
	generated by these $P^\p(\sigma_j)$'s.
	
	The category $\CatO(\SL_n)$ has a projective generator
	$P^\p_n \coloneqq \bigoplus_{k=0}^{n-1} P^\p(\sigma_k)$,
	with image $\Theta_{s_i} P^\p_n = P^\p(\sigma_i)^4$
	for all $1 \leq i \leq n-1$.
	Again due to \cref{eq:parabolic-sln:shuffling-of-projectives},
	we obtain that
	\[\FBim{\Theta_{s_i}} = \Hom_{\CatO_0}( P^\p_n, \Theta_{s_i} P^\p_n)
		= \Hom_{\CatO_0}\bigl(
			P^\p_n(\sigma_i-1)\oplus P^\p_n(\sigma_i) \oplus P^\p_n(\sigma_i+1), 
			P^\p_n(\sigma_i)^4\bigr
		)
	\]
	Replacing $e$ by $\sigma_{i-1}$, $s$ by $\sigma_i$ and $st$ by $\sigma_{i+1}$
	in the proof of \cref{thm:parabolic-sl3:main-statement}
	then gives a proof of $T'_{P(\sigma_i)} \cong \LSh{s_i}\hShift{1}$ for all $1 \leq i \leq n-1$
	as functors on $\Db(\CatO^\p_0(\SL_n))$.
\end{proof}

\Needspace{4\baselineskip}
\section{Further observations and final remarks}
\subsection{\texorpdfstring{$\CatO^\p_0$}{O} as a spherical subcategory}
We know that the object $P^\p(s)\in\CatO_0^\p(\SL_3)$ is spherical, so
one might ask whether $\CatO^\p_0(\SL_3)$ arises
as the spherical subcategory $\Sph(P^\p(s))$
of $P^\p(s)\in\CatO_{0,\SL_3}$.
However, $P^\p(s)$ is not spherelike in $\Db(\CatO(\SL_3))$,
\ie, we cannot assign a meaningful spherical subcategory to it.

To see this, consider the projective resolution $P^\p(s) \qis \{P(s)\to P(ts)\to P(s)\}$
in $\Db(\CatO(\SL_3))$.
Using this resolution, we obtain the chain complex
(see \cref{sec:twisting-by-Le} for an explanation of the notation)
\bgroup
\tikzset{
	ampersand replacement=\&,
	commutative diagrams/diagrams={
		row sep=small,
		column sep=tiny,
		nodes={inner sep=1pt, font=\scriptsize}
	}
}%
\begin{multline*}
	\hom^\bullet_{\Db(\CatO_0)}(P^\p(s), P^\p(s))
	\\
	\qis\left\{
	\left\langle
	\begin{tikzcd}[baseline=(R.base), ampersand replacement=\&]
		P(s) \rar\dar \& P(ts)\rar\dar \& P(s)\ar[d, "{\id, x}" name=R]\\
		P(s) \rar \& P(ts)\rar \& P(s)
	\end{tikzcd}
	\right\rangle
	\to
	0
	\to
	\left\langle
	\begin{tikzcd}[baseline=(R.base), ampersand replacement=\&]
		\&\& P(s) \rar\dar["{\id}"']\dar[phantom, "\phantom{\id, x}" name=R] \& P(ts) \rar \&  P(s) \\
		P(s) \rar \& P(ts) \rar \&  P(s)
	\end{tikzcd}
	\right\rangle
	\right\},
\end{multline*}
\egroup
whose leftmost bracket is in degree zero.
The complex $\hom^\bullet_{\Db(\CatO_0)}(P^\p(s), P^\p(s))$ 
thus has total dimension $3$, so $P^\p(s)$ is not spherelike.
As a side note, we notice that the inclusion
$\Db(\CatO_0^\p) \subset \Db(\CatO_0)$,
given by mapping projectives to projectives,
is not full.

\subsection{Necessity of extremal partitions}
Is it necessary to choose a parabolic subalgebra $𝔭$ which corresponds to the “extremal” partition $(n-1, 1)$,
\ie\ to the parabolic subgroup $S_{n-1}\times S_1<S_n$?
The maximal parabolic subalgebra
\begin{equation}
	\label{eq:sl4:subalgebra-without-spherical-objects}
	𝔭 = \begin{psmallmatrix}* & * & * &*\\ *&*&*&*\\0&0&*&*\\0&0&*&*\end{psmallmatrix} ⊂ \SL_4,
\end{equation}
corresponding to the parabolic subgroup $W_𝔭=⟨s⟩×⟨u⟩=S_2\times S_2<S_4=⟨s,t,u⟩$
with minimal length coset representatives $W^𝔭=\{e, t, tu, ts, tsu, tsut\}$
has parabolic Verma modules and indecomposable projectives
with composition series listed in \cref{tab:sl4:subalgebra-without-spherical-objects:composition-series};
these can be obtained from parabolic Kazhdan\-/Lusztig polynomials
(see also \autocite{Brundan-Stroppel:Highest-weight-categories-III}).
One checks from the composition series 
that $P^𝔭(t)$, $P^𝔭(ts)$ and $P^𝔭(tsu)$ are the only spherelike indecomposable projectives,
and there no possible $\mathrm A_3$ configuration of indecomposable projectives.
One might wonder if one can find at least an $\mathrm A_2$-configuration or a single spherical object.

Unfortunately, we encounter the same problem of the present spherelike objects
as in the non\-/parabolic $\SL_3$-case;
this can be seen from the composition series as follows.
The projectives $P^𝔭(ts)$, $P^𝔭(tu)$ corresponding to the two incomparable weights $ts\not\gtrless tu$
have non-trivial morphisms $P^𝔭(ts)→P^𝔭(tu)$ and $P^𝔭(tu)→P^𝔭(ts)$ whose composition
\begin{equation}
	\label{eq:sl4:P(ts)-P(tu)-not-spherical}
	\begin{tikzpicture}[x=.4cm,y=.4cm, node distance=1mm, baseline=(current bounding box.center),every node/.append style={inner sep=0mm}]
		\begin{scope}[every node/.append style={font=\scriptsize}]
			\node (I-ts) at (0,0) {$L(ts)$};
			\node[right=of I-ts] (I-tsut) {$L(tsut)$};
			\node[right=of I-tsut] (I-tu) {$L(tu)$};
			\node[above=of I-tsut] (I-t) {$L(t)$};
			\node[below=of I-tsut] (I-tsu) {$L(tsu)$};
			\node[above=of I-tu] (I-tsu') {$L(tsu)$};
			\node[above=of I-tsu'] (I-ts') {$L(ts)$};
			\node[braced box={Pa}, fit=(I-tsu') (I-ts')] {} ;
			\node[right=4 of I-tu] (II-ts) {$L(ts)$};
			\node[right=of II-ts] (II-tsut) {$L(tsut)$};
			\node[right=of II-tsut] (II-tu) {$L(tu)$};
			\node[above=of II-tsut] (II-t) {$L(t)$};
			\node[below=of II-tsut] (II-tsu) {$L(tsu)$};
			\node[above=of II-tu] (II-tsu') {$L(tsu)$};
			\node[above=of II-tsu'] (II-tu') {$L(tu)$};
			\draw[brace] ([xshift=1mm]II-tsu.south west) coordinate (c1) to node[brace tip] (Pb){} ([xshift=-1mm]II-ts.base west) coordinate (c2);
			\draw[->, limit bb] 
			let \p1=(c1),\p2=(c2),\n1={atan2(\y2-\y1,\x2-\x1)}
			in (Pa) to[out=0,in=\n1+90] (Pb.center);
			\node[braced box={Pc}, fit=(II-tsu') (II-tu')] {};
			\node[right=4 of II-tu] (III-ts) {$L(ts)$};
			\node[right=of III-ts] (III-tsut) {$L(tsut)$};
			\node[right=of III-tsut] (III-tu) {$L(tu)$};
			\node[above=of III-tsut] (III-t) {$L(t)$};
			\node[below=of III-tsut] (III-tsu) {$L(tsu)$};
			\node[above=of III-tu] (III-tsu') {$L(tsu)$};
			\node[above=of III-tsu'] (III-ts' ){$L(ts)$};
			\draw[brace] ([xshift=1mm]III-tsu.south west) coordinate (c1) to node[brace tip] (Pd){} ([xshift=-1mm]III-ts.base west) coordinate (c2);
			\draw[->, limit bb] 
			let \p1=(c1),\p2=(c2),\n1={atan2(\y2-\y1,\x2-\x1)}
			in (Pc) to[out=0,in=\n1+90] (Pd.center);
		\end{scope}
		\node[above=1 of I-t] (Pts) {$P^𝔭(ts)$};
		\node[above=1 of II-t] (Ptu) {$P^𝔭(tu)$};
		\node[above=1 of III-t] (Pts') {$P^𝔭(ts)$};
		\draw[->] (Pts) edge (Ptu) (Ptu) edge (Pts');
	\end{tikzpicture}
\end{equation}
is the zero morphism.
The same holds true for $\bigl(P^𝔭(tu)→P^𝔭(t)→P^𝔭(tu)\bigr) = 0$.
Hence neither $P^𝔭(tu)$ nor $P^𝔭(ts)$ is spherical.
For $P^𝔭(t)$ the composition
\begin{equation}
	\label{eq:sl4:P(t)-not-spherical}
	\begin{tikzpicture}[x=.4cm,y=.4cm, node distance=1mm, baseline=(current bounding box.center),every node/.append style={inner sep=0mm}]
		\begin{scope}[every node/.append style={font=\scriptsize}]
			\path
			node(I-ts) at (0,0) {$L(ts)$}
			node[right=of I-ts] (I-tsut) {$L(tsut)$}
			node[right=of I-tsut] (I-tu) {$L(tu)$}
			node[above=of I-tsut] (I-t) {$L(t)$}
			node[below=of I-tsut] (I-tsu) {$L(tsu)$}
			node[left=of I-ts] (I-e') {$L(e)$}
			node[below=of I-e'] (I-t') {$L(t)$}
			node[below=of I-t'] (I-tsut') {$L(tsut)$}
			node[braced box={P-I}, fit=(I-t) (I-tsu) (I-tu)] {}
			node[right=6.5 of I-tsu] (II-t) {$L(t)$}
			node[below=of II-t] (II-tsut) {$L(tsut)$}
			node[left=of II-tsut] (II-ts)  {$L(ts)$}
			node[right=of II-tsut] (II-tu) {$L(tu)$}
			node[below=of II-tsut] (II-tsu) {$L(tsu)$}
			node[above=of II-tu] (II-tsu') {$L(tsu)$}
			node[above=of II-tsu'] (II-ts') {$L(ts)$}
			node[right=of II-tsu']  (II-tsu'') {$L(tsu)$}
			node[above=of II-tsu''] (II-tu'') {$L(tu)$}
			node[right=of II-tu''] (II-tsut''') {$L(tsut)$}
			node[above=of II-tsut'''] (II-tsu''') {$L(tsu)$}
			node[braced box'={P-II}, fit=(II-t) (II-tsu) (II-ts)] {}
			node[braced box={P-II'}, fit=(II-tsu''') (II-tsut''') ] {}
			node[right=6 of II-tu''](III-ts) {$L(ts)$}
			node[right=of III-ts] (III-tsut) {$L(tsut)$}
			node[right=of III-tsut] (III-tu) {$L(tu)$}
			node[above=of III-tsut] (III-t) {$L(t)$}
			node[below=of III-tsut] (III-tsu) {$L(tsu)$}
			node[left=of III-ts] (III-e') {$L(e)$}
			node[below=of III-e'] (III-t') {$L(t)$}
			node[below=of III-t'] (III-tsut') {$L(tsut)$};
			\draw[brace] ([yshift=-1mm]III-tsu.base east) coordinate (C-III') to node[brace tip] (P-III){} ([yshift=-1mm]III-tsut'.south west) coordinate (C-III);
			\draw[->]
			(P-I) to[out=0, in=180,looseness=1.5] (P-II);
			\draw[limit bb] 
			(P-II') to[out=0, in=135] ($(III-tsut')-(2,1.2)$) coordinate (H1);
			\draw[->, limit bb]
			let \p1=(C-III),\p2=(C-III'),\n1={atan2(\y2-\y1,\x2-\x1)}
			in (H1) to[out=-45,in=\n1-90, looseness=1] (P-III.center);
		\end{scope}
		\path
		node[above=1 of I-t] (H2) {\strut}
		node (Pt) at ($(H2) !.5! (I-ts |- H2)$) {$P^𝔭(t)$}
		node (Ptsu) at (Pt -| II-ts') {$P^𝔭(tsu)$}
		node (Pt') at ($(III-t |- H2) !.5! (III-ts |- H2)$) {$P^𝔭(t)$};
		\draw[->] (Pt) -- (Ptsu);
		\draw[->] (Ptsu) -- (Pt');
	\end{tikzpicture}
\end{equation}
shows that $P^𝔭(t)$ it is not spherical either.

Therefore, for $𝔭⊆\SL_4$ the parabolic subalgebra
corresponding to the parabolic subgroup $S_2\times S_2\leq S_4$,
the modules $P^𝔭(t)$, $P^𝔭(ts)$ and $P^𝔭(tu)$ 
are the only spherelike indecomposable projective modules,
and none of them is spherical.

\begin{table}
	\caption{%
		Composition series of parabolic Verma modules and indecomposable projectives
		in $\mathcal{O}^\p_0$ for $\p⊆\SL_4$ the parabolic subalgebra
		corresponding to the parabolic subgroup $W_\p ≔ S_2\times S_2 \leq S_4$
		(see \cref{eq:sl4:subalgebra-without-spherical-objects}).
	}
	\label{tab:sl4:subalgebra-without-spherical-objects:composition-series}
	\tikzset{node distance=1mm, x=3mm, every node/.append style={inner sep=0mm, font=\scriptsize}, baseline}
	\centering
	\begin{tabular}{c|c|c}
		\toprule
		$w∈W^𝔭$ & $M^𝔭(w)$ & $P^𝔭(w)$\\
		\midrule
		$e$
		& 
		\tikz\draw
		node(I-e')  at (0,0) {$L(e)$}
		node[below=of I-e'] (I-t') {$L(t)$}
		node[below=of I-t'] (I-tsut') {$L(tsut)$};
		&
		\scriptsize dto.
		\\
		\midrule
		$t$
		&
		\tikz\draw
		node (I-ts) at (0,0) {$L(ts)$}
		node[right=of I-ts] (I-tsut) {$L(tsut)$}
		node[right=of I-tsut] (I-tu) {$L(tu)$}
		node[above=of I-tsut] (I-t) {$L(t)$}
		node[below=of I-tsut] (I-tsu) {$L(tsu)$};
		&
		\tikz\draw
		node(I-ts) at (0,0) {$L(ts)$}
		node[right=of I-ts] (I-tsut) {$L(tsut)$}
		node[right=of I-tsut] (I-tu) {$L(tu)$}
		node[above=of I-tsut] (I-t) {$L(t)$}
		node[below=of I-tsut] (I-tsu) {$L(tsu)$}
		node[left=1 of I-ts] (I-e') {$L(e)$}
		node[below=of I-e'] (I-t') {$L(t)$}
		node[below=of I-t'] (I-tsut') {$L(tsut)$};
		\\\midrule
		$ts$
		&	
		\tikz\draw
		node (ts'') at (0,0) {$L(ts)$}
		node[below=of ts''] (tsu'') {$L(tsu)$};
		&
		\tikz\path
		node (ts'') at (0,0) {$L(ts)$}
		node[below=of ts''] (tsu'') {$L(tsu)$}		
		node[below left=of tsu''] (tsu''') {$L(tsu)$}
		node[left=of tsu'''](tsut''') {$L(tsut)$}
		node[left=of tsut'''] {$L(ts)$}
		node[above=of tsut'''] {$L(t)$}
		node[below=of tsut'''] {$L(tsu)$};	
		\\	\midrule
		$tu$
		&
		\tikz\path
		node (tu'') at (0,0) {$L(tu)$}
		node[below=of tu''] (tsu'') {$L(tsu)$};
		&
		\tikz\path
		node (tu'') at (0,0) {$L(tu)$}
		node[below=of tu''] (tsu'') {$L(tsu)$}		
		node[below left=of tsu''] (tsu''') {$L(tsu)$}
		node[left=of tsu'''](tsut''') {$L(tsut)$}
		node[left=of tsut'''] {$L(ts)$}
		node[above=of tsut'''] {$L(t)$}
		node[below=of tsut'''] {$L(tsu)$};
		\\\midrule
		$tsu$
		&
		\tikz\draw
		node (tsu) at(0,0) {$L(tsu)$}
		node[below=of tsu] (tsut) {$L(tsut)$};
		&
		\tikz\path
		node (tsu) at(0,0) {$L(tsu)$}
		node[below=of tsu] (tsut) {$L(tsut)$}
		node[left=1 of tsut]  (tu') {$L(tu)$}
		node[below=of tu'] (tsu') {$L(tsu)$}	
		node[left=1 of tu'] (ts'') {$L(ts)$}
		node[below=of ts''] (tsu'') {$L(tsu)$}		
		node[below left=of tsu''] (tsu''') {$L(tsu)$}
		node[left=of tsu'''](tsut''') {$L(tsut)$}
		node[left=of tsut'''] {$L(ts)$}
		node[above=of tsut'''] {$L(t)$}
		node[below=of tsut'''] {$L(tsu)$};
		\\\midrule
		$tsut$
		&
		\tikz\draw
		node at (0,0)  {$L(tsut)$};
		&
		\tikz\path
		node (tsut) at (0,0) {$L(tsut)$}
		node[below left=of tsut] (tsu') {$L(tsu)$}
		node[below=of tsu'] (tsut') {$L(tsut)$}
		node[below left=1mm and 1 of tsut'] (tu'') {$L(tu)$}
		node[left=of tu''] (tsut'') {$L(tsut)$}
		node[left=of tsut''] (ts'') {$L(ts)$}
		node[above=of tsut''] {$L(t)$}
		node[below=of tsut''] {$L(tsu)$}
		node[left=1 of ts''] (e''') {$L(e)$}
		node[below=of e'''] (t''') {$L(t)$}
		node[below=of t'''] {$L(tsut)$};
		\\
		\bottomrule
	\end{tabular}
\end{table}

\subsection*{Acknowledgements}
This article compiles the results of a Master's thesis
written with the advice of Catharina Stroppel,
whom the author wants to thank for her continuous support
as well as the opportunity provided to dive into this topic.

\printbibliography
\end{document}